\newcommand{\Id}{\operatorname{Id}}
\newcommand{\rrVert}{\Vert}
\newcommand{\llVert}{\Vert}
\newcommand{\rrvert}{\vert}
\newcommand{\llvert}{\vert}
\newcommand{\acc}{\operatorname{acc}}
\newcommand{\cG}{{\mathcal G}}
\newcommand{\tN}{{\mathbb N}}
\newcommand{\R}{\mathbb{R}}
\newcommand{\Z}{\mathbb{Z}}
\newcommand{\E}{\mathbb{E}}
\newtheorem{theorem}{Theorem}
\newtheorem{lemma}{Lemma}
\newtheorem{proposition}{Proposition}
\begin{document}
\begin{frontmatter}

\title{Optimal scaling for the transient phase of the random walk
Metropolis algorithm: The~mean-field~limit\thanksref{T1}}
\runtitle{Optimal scaling for the transient phase of RWM}

\begin{aug}
\author[A]{\fnms{Benjamin}~\snm{Jourdain}\ead[label=e1]{jourdain@cermics.enpc.fr}}, 
\author[A]{\fnms{Tony}~\snm{Leli\`evre}\ead[label=e2]{lelievre@cermics.enpc.fr}} 
\and
\author[B]{\fnms{B\l{}a\.{z}ej}~\snm{Miasojedow}\corref{}\ead[label=e3]{bmia@mimuw.edu.pl}} 
\runauthor{B. Jourdain, T. Leli\`evre and B. Miasojedow}
\affiliation{Universit\'e Paris-Est and INRIA,
Universit\'e Paris-Est and INRIA,\\
and
Universit\'e Paris-Est and University of Warsaw}
\address[A]{B. Jourdain\\
T. Leli\`evre\\
Universit\'e Paris-Est\\
Cermics (ENPC)\\
INRIA\\
F-77455 Marne-la-Vall\'ee\\
France\\
\printead{e1}\\
\phantom{E-mail: }\printead*{e2}}
\address[B]{B. Miasojedow\\
Universit\'e Paris-Est\\
Cermics (ENPC)\\
F-77455 Marne-la-Vall\'ee\\
France\\
and\\
Institute of Applied Mathematics\\
University of Warsaw\\
ul. Banacha 2\\
02-097 Warszawa\\
Poland\\
\printead{e3}} 
\end{aug}
\thankstext{T1}{Supported by the French National Research Agency under
the Grants ANR-08-BLAN-0218 (BigMC) and ANR-09-BLAN-0216-01 (MEGAS).}

\received{\smonth{10} \syear{2012}}
\revised{\smonth{12} \syear{2013}}

%
\begin{abstract}
We consider the random walk Metropolis algorithm on $\mathbb{R}^n$
with Gaussian
proposals, and when the target probability measure is the $n$-fold
product of a one-dimensional law. In the limit
$n \to\infty$, it is well known (see
[\textit{Ann. Appl. Probab.} \textbf{7} (1997) 110--120])
that, when the variance of the
proposal scales inversely proportional to the dimension $n$ whereas
time is accelerated by the factor $n$, a
diffusive limit is obtained for each component of the Markov chain if
this chain starts at equilibrium. This paper
extends this result when the initial distribution is not the
target probability measure. Remarking that the interaction between the
components of the chain due to the
common acceptance/rejection of the proposed moves is of mean-field
type, we obtain a propagation of chaos
result under the same scaling as in the stationary case. This proves
that, in terms of the dimension $n$,
the same scaling holds for the transient phase of the
Metropolis--Hastings algorithm as near stationarity.
The diffusive and mean-field limit of each component is a diffusion
process nonlinear in the sense of
McKean. This opens the route to new investigations of the optimal
choice for the variance of the proposal distribution in order to accelerate
convergence to equilibrium (see [Optimal scaling for the transient phase of Metropolis--Hastings
algorithms: The longtime behavior \textit{Bernoulli} (2014) To appear]).
\end{abstract}

%
\begin{keyword}[class=AMS]
\kwd{60J22}
\kwd{60G09}
\kwd{60F17}
\kwd{65C05}
\kwd{65C40}
\end{keyword}
\begin{keyword}
\kwd{Diffusion limits}
\kwd{optimal scaling}
\kwd{random walk Metropolis}
\kwd{propagation of chaos}
\end{keyword}
\end{frontmatter}

\setcounter{footnote}{1}
\section{Introduction}\label{secintro}
Many Markov Chain Monte Carlo (MCMC) methods are based on the
Metropolis--Hastings algorithm~\cite
{metropolis-rosenbluth-rosenbltuh-teller-teller-53,hastings-70}. Let us
recall this well-known sampling technique. Let us consider a target
probability distribution on $\R^n$ with density $p$. Starting from an
initial random variable $X_0$, the Metropolis--Hastings algorithm
generates iteratively a Markov chain $(X_k)_{k \ge0}$ in two steps. At
time $k$, given $X_k$, a candidate $Y_{k+1}$ is sampled using a
proposal distribution with density $q(X_k,y)$. Then the proposal
$Y_{k+1}$ is accepted with probability $\alpha(X_k,Y_{k+1})$, where
\[
\alpha(x,y)= 1\wedge\frac{p(y)q(y,x)}{p(x)q(x,y)}.
\]
Here and in the following, we use the standard notation $a \wedge b =
\min(a,b)$. If the
proposed value is accepted, then $X_{k+1}=Y_{k+1}$ otherwise
$X_{k+1}=X_{k}$. The Markov chain $(X_k)_{k \ge0}$ is by construction
reversible with respect to the target density $p$, and thus admits
$p(x) \,dx$ as an invariant distribution. The efficiency of this algorithm
highly depends on the choice of the proposal distribution $q$. One
common choice is a Gaussian proposal centered at the current position
$x \in\R^n$
with variance~$\sigma^2 \Id_{n \times n}$:
\[
q(x,y)=\frac{1}{(2 \pi\sigma^2)^{n/2}} \exp \biggl(-\frac
{\llvert x-y\rrvert ^2}{2\sigma^2} \biggr).
\]
Since the proposal is symmetric ($q(x,y)=q(y,x)$), the acceptance
probability reduces to
%
\begin{equation}
\label{defacceptanceRWM} \alpha(x,y)= 1\wedge\frac{p(y)}{p(x)}.
\end{equation}
Metropolis--Hastings algorithms with symmetric kernels are called
random walk Metropolis (RWM) algorithms.

The choice of the variance $\sigma^2$ is crucial for the performance
of the RWM algorithm. It should be sufficiently large to ensure a good
exploration
of the state space, but not too large otherwise the rejection rate
becomes typically very high since the proposed moves fall in low
probability regions, in particular in high dimension. It is expected
that the higher the dimension, the smaller the variance of the
proposal should be. The first theoretical results to optimize the
choice of $\sigma^2$ in terms of the dimension $n$ are due to
Roberts, Gelman and Gilks in~\cite{roberts-gelman-gilks-97}. The authors study
the RWM algorithm under two fundamental (and somewhat restrictive)
assumptions: (i) the target probability distribution is the $n$-fold tensor
product of a one-dimensional density:
%
\begin{equation}
\label{eqpi} p(x) = \prod_{i=1}^n
\frac{\exp(-V(x_i))}{Z},
\end{equation}
where $x=(x_1, \ldots, x_n)$ and $Z=\int_{\R} \exp
(-V)$, and (ii) the initial distribution is the target probability:
\[
X^n_0 \sim p(x) \,dx.
\]
The superscript $n$ in the Markov chain $(X^n_k)_{ k \ge0}$
explicitly indicates the dependency on the dimension $n$.
Then, under additional regularity assumptions on $V$, the authors prove
that for a proper scaling of the variance as a function of the
dimension, namely
\[
\sigma_n^2=\frac{l^2}{n},
\]
where $l$ is a fixed constant, the Markov process $
(X^{1,n}_{\lfloor nt \rfloor} )_{t \ge0}$ (where\vspace*{1pt} $X^{1,n}_k \in
\R$ denotes the first component of $X^n_k \in\R^n$) converges in law
to a diffusion process:
%
\begin{equation}
\label{eqCasEq} dX_t = \sqrt{ h(l)} \,dB_t - h(l)
\tfrac{1}{2} V' (X_t) \,dt,
\end{equation}
where $(B_t)_{t\geq0}$ is a standard Brownian motion,
%
\begin{equation}
\label{eqh} h(l)=2l^2 \Phi \biggl( - \frac{l \sqrt{I}}{2} \biggr)
\quad\mbox{and}\quad I=\int_{\R} \bigl(V'
\bigr)^2 \frac{\exp(-V)}{Z}.
\end{equation}
Here and in the following, $\lfloor\cdot\rfloor$ denotes the integer
part (for $y \in\R$, $\lfloor y \rfloor\in\Z$ and $\lfloor y
\rfloor\le y < \lfloor y \rfloor+1$) and $\Phi$ is the cumulative
distribution function of the normal distribution [$\Phi(x) =
\frac{1}{\sqrt{2 \pi}} \int_{-\infty}^x \exp(-y^2/2) \,dy$]. The
scaling as a function of the dimension of the variance and of the
time are indications on how to make the RWM algorithm
efficient in high dimension. Moreover, a
practical counterpart of this result is that $l$ should be chosen such
that $h(l)$ is maximum (the optimal value of $l$ is
$l^*=\frac{2.38}{\sqrt{I}}$), in order to optimize the time
scaling in~(\ref{eqCasEq}). This optimal value of $l$ corresponds
equivalently to an average
acceptance rate $0.234$ (independently of the value of $I$): for $l=l^*$,
\[
\int\!\!\int\alpha(x,y) p(x) q(x,y) \,dx \,dy = 2 \Phi \biggl(-\frac{l^* \sqrt{I}}{2}
\biggr) \simeq0.234.
\]
Thus, the practical way to choose $\sigma^2$ is to scale it in such a
way that the average acceptance rate is roughly $1/4$.

There exist several extensions of such results for various
Metropolis--Hastings algorithms, see
\cite
{roberts-rosental-97,roberts-rosenthal-01,neal-roberts-11,neal-roberts-yuen-12,bedard-douc-moulines-12,bedard-douc-moulines-12b,beskos-roberts-sanz-serna-stuart-12},
and some of them relax in particular the first main assumption
mentioned above about
the product form of the target distribution;
see~\cite
{breyer-piccioni-scarlatti-04,breyer-roberts-00,bedard-07,bedard-08,beskos-roberts-stuart-09}.
Extensions to infinite-dimensional settings
have also been explored; see~\cite
{mattingly-pillai-stuart-12,pillai-struart-thiery-11,beskos-roberts-stuart-09}.

All these results assume stationarity: the initial
measure is the target probability measure. To the best of the authors'
knowledge, the only works which deal with a nonstationary case
are~\cite{christensen-roberts-rosenthal-05} where partial scaling
results are obtained for the RWM algorithm with a Gaussian target and
\cite{PST11}. In the latter paper, the target measure is assumed
to be absolutely continuous with respect to the law of an infinite-dimensional Gaussian random field and this measure is approximated in
a space of dimension $n$ where the MCMC algorithm is performed. The
authors consider a modified RWM algorithm (called preconditioned
Crank--Nicolson walk) started at a deterministic initial condition and
prove that when $\sigma_n$ tends to $0$ as $n$ tends to $\infty$ (with
no restriction on the rate of convergence of $\sigma_n$ to $0$), the
rescaled algorithm converges to a stochastic partial differential equation,
started at the same initial condition.

The aim of the present article is to show that, for the RWM algorithm, using
the same scaling for the variance and the time as in the
stationary case [namely $\sigma_n^2=\frac{l^2}{n}$ and considering
$ (X^{1,n}_{\lfloor nt \rfloor} )_{t \ge0}$], one\vspace*{2pt} obtains in
the limit $n$ goes to infinity the nonlinear (in the sense of McKean)
diffusion process:
%
\begin{eqnarray}\label{eqSDENL}
dX_t &=& \Gamma^{1/2} \bigl(\E \bigl[
\bigl(V'(X_t) \bigr)^2 \bigr],\E
\bigl[V''(X_t) \bigr] \bigr)
\,dB_t
\nonumber\\[-8pt]\\[-8pt]\nonumber
&&{} - {\cG} \bigl(\E \bigl[ \bigl(V'(X_t)
\bigr)^2 \bigr],\E \bigl[V''(X_t)
\bigr] \bigr)V'(X_t) \,dt,
\end{eqnarray}
where, for $a \in[0,+\infty]$ and $b \in\R$,
%
\begin{equation}
\label{eqGamma} \Gamma(a,b)= %
\cases{
\displaystyle l^2\Phi \biggl(-\frac{l b}{2\sqrt{a}} \biggr)+l^2e^{(l^2(a-b))/2}\Phi \biggl(l \biggl(\frac{b}{2\sqrt
{a}}-\sqrt{a} \biggr) \biggr),
\vspace*{3pt}\cr
\hspace*{78.5pt}\mbox{if $a\in(0,+\infty)$,}
\vspace*{3pt}\cr
\displaystyle\frac{l^2}{2},\hspace*{62.5pt}\mbox{if $a=+\infty$,}
\vspace*{3pt}\cr
\displaystyle l^2 e^{-(l^2b^+)/2},\qquad\mbox{if $a=0$},}
\end{equation}
where $b^+=\max(b,0)$, and
%
\begin{equation}
\label{eqG}
\qquad {\mathcal G}(a,b)= \cases{
\displaystyle l^2e^{(l^2(a-b))/2}\Phi \biggl(l \biggl(\frac{b}{2\sqrt{a}}-\sqrt{a} \biggr) \biggr), &\quad if $a\in(0,+\infty)$,
\vspace*{3pt}\cr
0, &\quad if $a=+\infty$,
\vspace*{3pt}\cr
\displaystyle 1_{\{b>0\}}l^2 e^{-(l^2b)/2}, &\quad if $a=0$.}
\end{equation}
Notice that we will assume $V''$ to be bounded, so that the
coefficients in~(\ref{eqSDENL}) are well defined. This convergence
result is
precisely stated in Theorem~\ref{chaos} below and can be seen as a
mean-field limit combined with a diffusion approximation. We would like
to mention that another (different in
nature) mean-field limit is considered in~\cite
{breyer-piccioni-scarlatti-04} in the
context of optimal scaling: the limit is obtained, under the
stationarity assumption,
for a target measure which admits some mean-field limit as $n \to
\infty$.

Our convergence result generalizes the
previous analysis in~\cite{roberts-gelman-gilks-97} which was limited
to the stationary
case [namely $X^n_0$ is distributed according to~$p(x) \,dx$]. In particular,
in the stationary case, we recover the dynamics~(\ref{eqCasEq}). It
also generalizes results from~\cite{christensen-roberts-rosenthal-05}
to non-Gaussian
targets.

The
proof is based on a classical technique to prove propagation of
chaos \cite{sznitman-91}. We first show the tightness of the empirical
distribution. Then we pass to the limit in a martingale problem,
which is the weak formulation of~(\ref{eqSDENL}). Notice that such a
weak formulation has also recently been used in~\cite
{mattingly-pillai-stuart-12} to deal
with the stationary case.

This new result opens the
route to new investigations of the optimal choice for the variance of
the proposal
distribution, by precisely taking into account the transient regime
(when the Markov chain is not yet at equilibrium). It shows, for
example, how to scale properly the variance and the number of samples
as a function of the dimension, at least for a product target. A more
detailed analysis of the longtime behavior of the nonlinear
diffusion~(\ref{eqSDENL}) and of the practical counterparts of this
convergence result are the
subject of a companion paper~\cite{JLM2}.

The paper is organized as follows. In Section~\ref{secmain}, we state
our main convergence result, we present a formal derivation of the
limiting diffusion process and we explain the three main steps of its
rigorous proof. Sections~\ref{secmart},~\ref{sectight} and~\ref
{seclimit} are, respectively, devoted to each of these main steps:
uniqueness for the stochastic differential equation (\ref{eqSDENL})
and its weak formulation as a martingale problem, tightness of the laws
of the processes $ (X^{1,n}_{\lfloor nt \rfloor} )_{t \ge
0}$ and identification of the limit\vspace*{1pt} probability measures on the path
space thanks to the martingale problem. Last, in Section~\ref
{secprop}, we prove the convergence of the acceptance probability in
the RWM algorithm to
$\frac{1}{l^2}\Gamma(\E[(V'(X_t))^2],\E[V''(X_t)])$.

\section{The main convergence result}\label{secmain}
Let us first present the precise statement for the main convergence
result. Then we will give a formal derivation of the limiting
process before sketching the rigorous proof.

\subsection{Notation and convergence to the diffusion process}

We consider a random walk Metropolis algorithm using Gaussian
proposal with variance $\sigma_n^2=\frac{l^2}{n}$, and with target
$p$ defined by~(\ref{eqpi}). The Markov
chain generated using this algorithm writes
%
\begin{equation}
\label{rwm} X^{i,n}_{k+1}=X^{i,n}_k+
\frac{l}{\sqrt{n}}G^i_{k+1}1_{{\mathcal
A}_{k+1}},\qquad 1\leq i\leq n
\end{equation}
with
\[
{\mathcal A}_{k+1}= \bigl\{U_{k+1}\leq e^{\sum_{i=1}^n
(V(X^{i,n}_k)-V(X^{i,n}_k+({l}/{\sqrt{n}})G^i_{k+1}))}
\bigr\},
\]
where $(G^i_k)_{i,k\geq1}$ is a sequence of independent and
identically distributed (i.i.d.) normal random
variables, independent from a sequence $(U_k)_{k\geq1}$ of i.i.d.
random variables uniform on $[0,1]$. We assume that the initial
positions $(X^{1,n}_0,\ldots,X^{n,n}_0)$ are exchangeable
(namely the law of the vector is invariant under \mbox{permutation} of the indices)
and independent from $(G^i_k)_{i,k\geq1}$ and $(U_k)_{k\geq1}$.
Exchangeability is preserved by the dynamics: for all $k
\ge1$, $(X^{1,n}_k,\ldots,X^{n,n}_k)$ are exchangeable. We denote by
${\mathcal F}^n_k$ the sigma field generated by
$(X^{1,n}_0,\ldots,X^{n,n}_0)$ and $(G^1_l,\ldots,G^n_l,U_l)_{1\leq
l\leq k}$.\vspace*{1pt}

In all the following, we also assume that
%
\begin{equation}
\label{eqhypV} \cases{\mbox{$V$ is a ${\mathcal C}^3$ function on $
\R$}
\cr
\mbox{with bounded second- and third-order derivatives.}}
\end{equation}

For $t>0$ and $i \in\{1,\ldots,n\}$, let
\begin{eqnarray*}
Y^{i,n}_t&=& \bigl(\lceil nt\rceil-nt \bigr)X^{i,n}_{\lfloor
nt\rfloor}+
\bigl(nt-\lfloor nt\rfloor \bigr)X^{i,n}_{\lceil
nt\rceil}
\\
&=&X^{i,n}_{\lfloor nt\rfloor}+ \bigl(nt-\lfloor nt\rfloor \bigr)
\frac{l}{\sqrt{n}}G^i_{\lceil nt\rceil}1_{{\mathcal
A}_{\lceil nt\rceil}}
\end{eqnarray*}
be the linear
interpolation of the Markov chain obtained by rescaling time
(the characteristic time scale is $1/n$, and
$Y^{i,n}_{k/n}=X^{i,n}_k$, $\forall k \in\Z$). Here\vspace*{1pt} and in the following
$\lceil\cdot\rceil$ is the upper integer part (for $y \in\R$,
$\lceil y \rceil\in\Z$ and $\lceil y \rceil-1 < y \le\lceil y
\rceil$).

Let us define the notion of convergence (namely the propagation of
chaos) that will be useful to study
the convergence of the interacting particle system
$((Y^{1,n}_t, \ldots, Y^{n,n}_t )_{t \ge0})_{ n \ge1}$ in the limit
$n$ goes to infinity.

\begin{definition}
Let $E$ be a separable metric space. A sequence
$(\chi^n_1,\ldots,\break \chi_n^n)_{n\geq1}$ of exchangeable $E^n$-valued
random variables is said to be $\nu$-chaotic where $\nu$ is a
probability measure on $E$ if for fixed $j\in\tN^*$, the law of
$(\chi^n_1,\ldots,\chi_j^n)$ converges in distribution to $\nu
^{\otimes j}$ as
$n$ goes to $\infty$.
\end{definition}

We are now in position to state the main convergence result.

\begin{theorem}\label{chaos}Assume~(\ref{eqhypV}) and let $m$ be a
probability measure on $\R$
such that $\int_{\R} (V')^4(x) m(dx) <+\infty$. If the initial
positions $(X^{1,n}_0,\ldots,X^{n,n}_0)_{n\geq1}$ are exchangeable,
$m$-chaotic and such that
$\sup_n\E[(V'(X^{1,n}_0))^4]<+\infty$,
then the processes $((Y^{1,n}_t,\ldots,Y^{n,n}_t)_{t\geq0})_{n\geq
1}$ are $P$-chaotic where $P$ denotes the law [on the space
${\mathcal C}(\R_+,\R)$ of continuous functions with values in $\R$]
of the solution to the nonlinear stochastic differential equation in
the sense of McKean (for which strong and weak existence and uniqueness hold)
%
\begin{eqnarray}
\label{edsnonlin} X_t&=&\xi+\int_0^t
\Gamma^{1/2} \bigl(\E \bigl[ \bigl(V'(X_s)
\bigr)^2 \bigr],\E \bigl[V''(X_s)
\bigr] \bigr)\,dB_s
\nonumber
\\[-8pt]
\\[-8pt]
&&{} -\int_0^t{\cG} \bigl(\E \bigl[
\bigl(V'(X_s) \bigr)^2 \bigr],\E
\bigl[V''(X_s) \bigr]
\bigr)V'(X_s)\,ds,
\nonumber
\end{eqnarray}
where $\Gamma$ and $\cG$ are, respectively, defined by~(\ref{eqGamma})
and~(\ref{eqG}) and $(B_t)_{t\geq1}$ is a Brownian motion
independent from the initial position $\xi$ distributed according to
$m$.
\end{theorem}

Let us make a few remarks on this result. First, concerning the
assumption on the initial positions
$(X^{1,n}_0,\ldots,X^{n,n}_0)_{n\geq1}$, we note that it is satisfied,
for instance,
when the random variables $X^{1,n}_0,\ldots,X^{n,n}_0$ are
i.i.d. according to the probability measure~$m$ on $\R$. Second,
notice that the results of Theorem~\ref{chaos} do not require $\exp(-V)$
to be integrable. Finally, according to \cite{ethier-kurtz-86} (see
Proposition~10.4, page~149 and Theorem~10.2, page~148), under the assumptions
of Theorem~\ref{chaos}, the piecewise constant processes
$((X^{1,n}_{\lfloor nt\rfloor},\ldots,X^{n,n}_{\lfloor nt\rfloor
})_{t\geq0})_{n\geq1}$ are\vspace*{1pt} also $P$-chaotic when the space of c\`
{a}dl\`{a}g sample paths from $[0,+\infty)$ is endowed with the
topology of uniform convergence on compact sets.

In addition to the previous convergence result, we are able to
identify the limiting average acceptance rate.%

\begin{proposition}\label{proplimiting-acceptance} Under the
assumptions of Theorem~\ref{chaos}, the function
\[
t\mapsto \E \biggl\llvert \mathbb{P} \bigl({\mathcal A}_{\lfloor nt\rfloor+1}|{
\mathcal F}^n_{\lfloor
nt\rfloor} \bigr)-\frac{1}{l^2}\Gamma \bigl(\E
\bigl[ \bigl(V'(X_t) \bigr)^2 \bigr],\E
\bigl[V''(X_t) \bigr] \bigr) \biggr\rrvert
\]
converges locally uniformly to $0$ and in particular, the average
acceptance rate $t\mapsto
\mathbb{P}({\mathcal A}_{\lfloor nt\rfloor+1})$ converges locally uniformly
to $t\mapsto\acc(\E[(V'(X_t))^2],\break \E[V''(X_t)])$ where for any
$a\geq0$ and $b\in\R$,
%
\begin{equation}
\label{deflimitacc} \acc(a,b)=\frac{\Gamma(a,b)}{l^2}.
\end{equation}
\end{proposition}

In the following, we will also need the infinitesimal generator associated
to~(\ref{edsnonlin}). For a probability measure $\mu$ on $\R$,
$\langle\mu,V''\rangle$ is well defined by boundedness of $V''$, and
$\langle\mu,(V')^2\rangle$ is also well defined in $[0,+\infty]$.
Here and in the following,
the bracket notation refers to the duality bracket for
probability measures on $\R$: for~$\mu$ a probability measure and
$\phi$ a bounded or positive measurable function,
\[
\langle\mu, \phi\rangle= \int_{\R} \phi(x) \mu(dx).
\]
The infinitesimal generator associated to~(\ref{edsnonlin}) is $L_\mu
$ defined by
%
\begin{eqnarray}\label{deflmu}
L_\mu\varphi(x)
&=& \tfrac{1}{2}\Gamma \bigl( \bigl\langle\mu,
\bigl(V' \bigr)^2 \bigr\rangle, \bigl\langle
\mu,V'' \bigr\rangle \bigr)\varphi''(x)
\nonumber\\[-8pt]\\[-8pt]\nonumber
&&{} -\cG \bigl( \bigl\langle\mu, \bigl(V' \bigr)^2 \bigr
\rangle, \bigl\langle\mu,V'' \bigr\rangle
\bigr)V'(x) \varphi'(x).
\end{eqnarray}
More precisely, if $(X_t)_{t\geq0}$ satisfies~(\ref{edsnonlin}) and
$P_t$ denotes the law of $X_t$, then
%
\begin{eqnarray}\label{eqmartin}
&& \mbox{for any test function } \varphi,
\nonumber\\[-8pt]\\[-8pt]\nonumber
&& \qquad \biggl(\varphi(X_t) - \int
_0^t L_{P_s} \varphi(X_s)
\,ds \biggr)_{t\geq0}\mbox{ is a martingale.}
\end{eqnarray}
Equivalently, for any $s < t$,
%
\begin{equation}
\label{eqmartingale} \E \biggl( \varphi(X_t) - \int_s^t
L_{P_r} \varphi(X_r) \,dr \Big| {\mathcal F}_s
\biggr) = \varphi(X_s),
\end{equation}
where ${\mathcal F}_s=\sigma(X_r, r \le s)$.
Actually, as explained in Section~\ref{secmart} below, this martingale
representation characterizes the distribution [over ${\mathcal C}(\R
_+,\R)$] of solutions to~(\ref{edsnonlin}): probability measures
under which (\ref{eqmartin}) holds are distributions of solutions
to~(\ref{edsnonlin}), and reciprocally.

\subsection{Relation to previous results in the literature}

Let us discuss how this theorem is related to previous results in
the literature. First, when $Z=\int_\R e^{-V(x)}\,dx<+\infty$, our
convergence result generalizes the scaling limit for the
random walk Metropolis--Hastings algorithm stated in the early
paper~\cite{roberts-gelman-gilks-97} under\vspace*{1pt}
the restrictive assumption that the vector of initial positions
$(X^{1,n}_0,\ldots,X^{n,n}_0)$ is distributed according to the target
distribution $p(x) \,dx$. In this case, it is clear that for all
$n,k\in{\mathbb N}$, $(X^{1,n}_k,\ldots,X^{n,n}_k)$ is distributed
according to $p(x) \,dx$. Moreover, we have the following result.

\begin{lemma}
Assume that~(\ref{eqhypV}) holds, and that $\int_\R e^{-V(x)}\,dx <
\infty$.
Then
\[
\int_\R \bigl(V'(x) \bigr)^2e^{-V(x)}\,dx=
\int_\R V''(x)e^{-V(x)}\,dx<+
\infty.
\]
\end{lemma}

\begin{pf}
The integrability of $e^{-V}$ implies that $\liminf_{\llvert x\rrvert \to\infty
}\llvert x\rrvert e^{-V(x)}=0$. Since $\llvert V'(x)\rrvert \leq\llvert V'(0)\rrvert +\llVert  V''\rrVert  _\infty\llvert x\rrvert $, one
deduces the existence of a sequence $(x_n)_n$ of negative numbers
tending to $-\infty$ and a
sequence $(y_n)_n$ of positive numbers tending to $+\infty$ such
that $\lim_{n\to
+\infty}\llvert V'(x_n)\rrvert e^{-V(x_n)}+\llvert V'(y_n)\rrvert e^{-V(y_n)}=0$. By integration
by parts,
\begin{eqnarray*}
&& \int_{x_n}^{y_n} \bigl(V'(x)
\bigr)^2e^{-V(x)}\,dx
\\
&&\qquad = V'(x_n)e^{-V(x_n)}-V'(y_n)e^{-V(y_n)}+
\int_{x_n}^{y_n}V''(x)e^{-V(x)}\,dx.
\end{eqnarray*}
Taking the limit $n\to\infty$ thanks to monotone convergence in the
left-hand side and thanks to Lebesgue's theorem and boundedness of
$V''$ in the integral in the right-hand side, one concludes that $\int_\R(V'(x))^2e^{-V(x)}\,dx=\int_\R V''(x)e^{-V(x)}\,dx<+\infty$.
\end{pf}

One deduces that for each $t\geq0$ the solution
$X_t$ of (\ref{edsnonlin}) is distributed according to $Z^{-1}
\exp(-V(x)) \,dx$ so that $(X_t)_{t\geq0}$ also solves the
stochastic differential equation~(\ref{eqCasEq})--(\ref{eqh}) with
time-homogeneous coefficients [here,\vspace*{1pt} we use the fact that
$\Gamma(I,I)=2\cG(I,I)=h(l)$\vspace*{1pt} where $I=\int_\R
(V'(x))^2e^{-V(x)}\frac{dx}{Z}=\int_\R
V''(x)e^{-V(x)}\frac{dx}{Z}$]. Notice that our convergence
result requires more regularity but less\vspace*{1.5pt} integrability than in
\cite{roberts-gelman-gilks-97}, Theorem 1.1, where the log-density $-V$
is assumed to be
${\mathcal C}^2$ with a bounded second-order derivative and such that
$\int_{\R}
(V')^8 \exp(-V)<+\infty$.

Second, we also recover results from~\cite
{christensen-roberts-rosenthal-05}, where the authors
consider a nonstationary case, but restrict their analysis to
Gaussian distributions: $V(x)=\frac{x^2}{2}$. In this case, the
function $V''$ is
constant equal to $1$ and, for $X_t$ solution to~(\ref{edsnonlin}), one
obtains that
\begin{eqnarray*}
\frac{d}{dt}\E \bigl[X_t^2 \bigr]&=&\Gamma \bigl(\E
\bigl[X_t^2 \bigr],1 \bigr) -2\E \bigl[X_t^2
\bigr]{\cG} \bigl(\E \bigl[X_t^2 \bigr],1 \bigr)
\\
&=&l^2\Phi \biggl(-\frac{l}{2\sqrt{\E[X_t^2]}} \biggr)
\\
&&{} + \bigl(1-2 \E
\bigl(X_t^2 \bigr) \bigr)l^2e^{(l^2(\E[X_t^2]-1))/2}
\Phi \biggl(l \biggl(\frac{1}{2\sqrt{\E[X_t^2]}}-\sqrt{\E \bigl[X_t^2
\bigr]} \biggr) \biggr).
\end{eqnarray*}
This is indeed the ordinary differential equation satisfied by the
deterministic function obtained as the limit (when $n \to\infty$) of
the processes $ (\frac{1}{n}\sum_{i=1}^n(X^{i,n}_{\lfloor
nt\rfloor})^2 )_{t\geq0}$ in~\cite{christensen-roberts-rosenthal-05}, Theorem~1. More precisely, the proof of our
Proposition~\ref{proplimiting-acceptance} ensures that $\E\llvert \frac{1}{n}\sum_{i=1}^n(X^{i,n}_{\lfloor
nt\rfloor})^2-\E[X_t^2]\rrvert $ converges to $0$ locally uniformly
in $t$ as $n\to\infty$.

\subsection{A formal derivation}\label{secformal}

Before going into the details of a rigorous proof, let us explain how
this limit diffusion process can be formally derived.

First, let us
make precise how to choose the scaling of $\sigma_n$ as a function of
$n$. The idea (see~\cite{roberts-rosenthal-01}) is to
choose $\sigma_n$ in such a way that the limiting acceptance rate
(when $n \to\infty$) is neither zero nor one. In the first case, this
would mean that the variance of the proposal is too large, so that all
proposed moves are rejected. In the second case, the variance of the
proposal is too small, and the rate of convergence to equilibrium is
thus not optimal. In particular, it is easy to check that $\sigma_n$
should go to zero as $n$ goes to infinity. Now, notice that the
limiting acceptance rate is
%
\begin{eqnarray}\label{eqacceptRWM}
\E \bigl( 1_{{\mathcal
A}_{k+1}} | {\mathcal F}^n_k \bigr)
&=&\E \bigl( e^{\sum_{i=1}^n
(V(X^{i,n}_k)-V(X^{i,n}_k+\sigma_n G^i_{k+1}))} \wedge1 | {\mathcal F}^n_k
\bigr)\nonumber
\\
&=& \E \bigl(e^{- \sum_{i=1}^n
(V'(X^{i,n}_k) \sigma_n G^i_{k+1} +
V''(X^{i,n}_k) ({\sigma_n^2}/{2}) ) } \wedge1 | {\mathcal F}^n_k\bigr)\nonumber
\\
&&{} + {\mathcal O} \bigl(n \sigma_n^{3} \bigr) + {
\mathcal O} \bigl(\sqrt{n} \sigma_n^{2} \bigr)
\nonumber\\[-8pt]\\[-8pt]\nonumber
&=& \exp \biggl(\frac{a_n-b_n}{2} \biggr) \Phi \biggl( \frac{b_n}{2
\sqrt{a_n}} -\sqrt{a_n} \biggr) + \Phi \biggl(-\frac{b_n}{2\sqrt{a_n}} \biggr)
\\
&&{} + {\mathcal O} \bigl(n \sigma_n^{3} \bigr) + {\mathcal O}
\bigl( \sqrt{n} \sigma_n^{2} \bigr)
\nonumber
\\
&=&\frac{1}{l^2} \Gamma(a_n,b_n) + {\mathcal O}
\bigl(n \sigma_n^{3} \bigr) + {\mathcal O} \bigl(\sqrt{n}
\sigma_n^{2} \bigr),\nonumber
\end{eqnarray}
where\vspace*{1.5pt} $a_n=\frac{\sigma_n^2}{l^2} \sum_{i=1}^n
(V'(X^{i,n}_k))^2 $ and $b_n=\frac{\sigma_n^2}{l^2} \sum_{i=1}^n
V''(X^{i,n}_k)$. To obtain~(\ref{eqacceptRWM}), we used an explicit
computation of the
expectation with respect to the Gaussian measure; see~(\ref{calesp3})
below (with $\alpha=0$).
From this expression, assuming a propagation of chaos (law of large
number) result on the random variables $(X^{i,n}_k)_{1 \le i \le n}$,
one can check that the correct scaling for the variance is
$\sigma_n^2=\frac{l^2}{n}$ in order to obtain a nontrivial
limiting acceptance rate (see
Proposition~\ref{proplimiting-acceptance} above). More
precisely, if $a_n \to0$ and $b_n \to0$, then the acceptance
rate goes to $1$ [by continuity of $\Gamma$ at point $(0,0)$, see
Lemma~\ref{propggam} below]. If $a_n \sim\alpha n^\epsilon$ and $b_n
\sim\beta n^\epsilon$ (for some $\epsilon>0$), then the acceptance
rate goes to~$0$ if $\beta>0$ and to $1$ if $\beta< 0$.\vspace*{1pt}

Using the scaling $\sigma_n^2=\frac{l^2}{n}$, we observe that,
for a test function $\varphi\dvtx  \R\to\R$,
%
\begin{eqnarray}\label{eqgeninf}
\E \bigl(\varphi \bigl(X^{1,n}_{k+1} \bigr) | {\mathcal
F}^n_k \bigr)
&=& \E \biggl(\varphi \biggl(X^{1,n}_k+\frac{l}{\sqrt
{n}}G^1_{k+1}1_{{\mathcal
A}_{k+1}}
\biggr) \Big| {\mathcal F}^n_k \biggr)
\nonumber
\\
&=&  \varphi \bigl(X^{1,n}_k \bigr) + \varphi'
\bigl(X^{1,n}_k \bigr) \frac{l}{\sqrt{n}} \E \bigl(
G^1_{k+1}1_{{\mathcal
A}_{k+1}} | {\mathcal F}^n_k
\bigr)
\\
&&{} +\frac{l^2}{2n} \varphi'' \bigl(X^{1,n}_k
\bigr) \E \bigl( \bigl(G^1_{k+1} \bigr)^2
1_{{\mathcal
A}_{k+1}} | {\mathcal F}^n_k \bigr) + {\mathcal
O} \bigl(n^{-3/2} \bigr).\nonumber
\end{eqnarray}
We compute
%
\begin{eqnarray} \label{eqdrift}
&& \E \bigl( G^1_{k+1} 1_{{\mathcal
A}_{k+1}} | {\mathcal
F}^n_k \bigr)\nonumber
\\
&&\qquad = \E \bigl( G^1_{k+1}
\bigl( e^{\sum_{i=1}^n
(V(X^{i,n}_k)-V(X^{i,n}_k+({l}/{\sqrt{n}})G^i_{k+1}))} \wedge1 \bigr) | {\mathcal F}^n_k
\bigr)\nonumber
\\
&&\qquad = \E \bigl( G^1_{k+1} \bigl( e^{- \sum_{i=1}^n
(V'(X^{i,n}_k) ({l}/{\sqrt{n}})G^i_{k+1} +
V''(X^{i,n}_k) ({l^2}/{(2n)}) ) } \wedge1 \bigr) | {\mathcal F}^n_k \bigr)
\\
&&\quad\qquad{}  + {\mathcal O} \bigl(n^{-1/2} \bigr)\nonumber
\\
&&\qquad =- V' \bigl(X^{1,n}_k \bigr)
\frac{1}{l \sqrt{n}} \cG \bigl( \bigl\langle\nu ^n_{k},
\bigl(V' \bigr)^2 \bigr\rangle, \bigl\langle
\nu^n_{k}, V'' \bigr\rangle
\bigr) + {\mathcal O} \bigl(n^{-1/2} \bigr),\nonumber
\end{eqnarray}
where
\[
\nu^n_{k}=\frac{1}{n} \sum
_{i=1}^n \delta_{X^{i,n}_k}
\]
denotes the empirical distribution associated to the interacting
particle system.
Equation~(\ref{eqdrift}) is a consequence of~(\ref{calesp1})
below. A more detailed analysis (see Lemma~\ref{propdrift}
below) shows that the remainder is of order ${\mathcal
O}(n^{-3/4})$. This is one of the most crucial estimate to prove
rigorously the convergence result. For the diffusion term, we get
%
\begin{eqnarray}\label{eqdiff}
\quad && \E \bigl( \bigl(G^1_{k+1} \bigr)^2
1_{{\mathcal
A}_{k+1}} | {\mathcal F}^n_k \bigr)\nonumber
\\
&&\qquad  = \E \bigl(
\bigl(G^1_{k+1} \bigr)^2 \bigl( e^{\sum_{i=1}^n
(V(X^{i,n}_k)-V(X^{i,n}_k+({l}/{\sqrt{n}})G^i_{k+1}))}
\wedge1 \bigr) | {\mathcal F}^n_k \bigr)
\nonumber
\\
&&\qquad = \E \bigl( \bigl(G^1_{k+1} \bigr)^2 \bigl(
e^{- \sum_{i=1}^n
(V'(X^{i,n}_k) ({l}/{\sqrt{n}})G^i_{k+1} +
V''(X^{i,n}_k) ({l^2}/{(2n)}))} \wedge1 \bigr) | {\mathcal F}^n_k
\bigr)
\\
&&\quad\qquad{} + {\mathcal O} \bigl(n^{-1/2} \bigr)
\nonumber
\\
&&\qquad = \frac{1}{l^2} \Gamma \bigl( \bigl\langle\nu^n_{k},
\bigl(V' \bigr)^2 \bigr\rangle, \bigl\langle
\nu^n_{k}, V'' \bigr\rangle
\bigr) + {\mathcal O} \bigl(n^{-1/2} \bigr).\nonumber
\end{eqnarray}
To obtain~(\ref{eqdiff}), we again used an explicit computation;
see~(\ref{calesp3}) below.

By plugging~(\ref{eqdrift}) [with the remainder of order ${\mathcal
O}(n^{-3/4})$] and~(\ref{eqdiff})
into~(\ref{eqgeninf}), we see that the correct scaling in time is
to consider $Y^{i,n}_t$ such that $Y^{i,n}_{k/n}=X^{i,n}_k$, and we get
%
\begin{eqnarray}\label{pdmdisc}
&& \E \bigl( \varphi \bigl(Y^{1,n}_{(k+1)/n} \bigr) | {\mathcal
F}^n_k \bigr)\nonumber
\\
&&\qquad
= \varphi \bigl(Y^{1,n}_{k/n}
\bigr)  - \varphi' \bigl(Y^{1,n}_{k/n} \bigr)
\frac{1}{n} V' \bigl(Y^{1,n}_{k/n} \bigr)
\cG \bigl( \bigl\langle\mu^n_{k/n}, \bigl(V'
\bigr)^2 \bigr\rangle, \bigl\langle\mu^n_{k/n},
V'' \bigr\rangle \bigr)
\nonumber\\[-8pt]\\[-8pt]\nonumber
&&\quad\qquad{} +\frac{1}{2n} \varphi'' \bigl(
Y^{1,n}_{k/n} \bigr) \Gamma \bigl( \bigl\langle\mu
^n_{k/n}, \bigl(V' \bigr)^2 \bigr
\rangle, \bigl\langle\mu^n_{k/n}, V''
\bigr\rangle \bigr) + {\mathcal O} \bigl(n^{-5/4} \bigr)
\\
&&\qquad = \varphi \bigl(Y^{1,n}_{k/n} \bigr) + \frac{1}{n} (
L_{
\mu^n_{k/n}} \varphi ) \bigl(Y^{1,n}_{k/n} \bigr) + {
\mathcal O} \bigl(n^{-5/4} \bigr),
\nonumber
\end{eqnarray}
where $L_\mu$ is defined by~(\ref{deflmu}), and $\mu^n_t$ denotes the
time-marginal of $\mu^n$ defined by~(\ref{eqmun}) below (for $k \in
\tN$, $\mu^n_{k/n}=\nu^n_k$). This\vspace*{1pt} can be seen as a
discrete-in-time version (over a timestep of size $1/n$) of the
martingale property~(\ref{eqmartingale}) [which is actually a
characterization in law of a solution to~(\ref{deflmu}), as explained
below]. Thus, by sending $n$ to
infinity and assuming that a law of large number holds for the
empirical measure $\nu^n_{k}$, we expect $Y^{1,n}_t$ to converge to a solution
to~(\ref{edsnonlin}). The aim of Section~\ref{secproof} is to sketch
the rigorous proof of this result.

\subsection{Sketch of the rigorous proof}\label{secproof}

The next sections are, respectively, devoted to the three steps of the
proof of Theorem~\ref{chaos}. In Section~\ref{secmart}, we first
introduce a nonlinear martingale problem which is a weak formulation
of~(\ref{edsnonlin}): the law of any solution to this stochastic
differential equation solves the martingale problem. We check
uniqueness for the martingale problem by proving trajectorial
uniqueness for the stochastic differential equation (\ref{edsnonlin}).
Then, in
Section~\ref{sectight}, we check the tightness of the sequence of
laws of the processes $(Y^{1,n}_t)_{t\geq0}$. Because of the
exchangeability of the processes $((Y^{1,n}_t, \ldots, Y^{n,n}_t )_{t
\ge0})_{ n \ge1}$ and according\vspace*{1pt}
to~\cite{sznitman-91}, this is equivalent to the tightness of the sequence
$(\pi^n)_n$ of the laws of the empirical
measures
%
\begin{equation}
\label{eqmun} \mu^n=\frac{1}{n}\sum
_{i=1}^n\delta_{Y^{i,n}}
\end{equation}
considered
as random variables valued in the space ${\mathcal P}({\mathcal C})$
of probability measure on the set ${\mathcal C}$ of continuous paths
from $[0,+\infty)$ to $\R$. The space ${\mathcal C}$ is endowed with
the topology of uniform convergence on compact sets and ${\mathcal
P}({\mathcal C})$ with the corresponding topology for convergence in
distribution. The third and last step, performed in
Section~\ref{seclimit}, consists in checking that the limit $\pi
^\infty$
of any convergent subsequence of $(\pi^n)_n$ is concentrated on the
solutions of the martingale problem, which, in particular, provides
existence of a solution $P$ to this problem. A probability measure $Q$
on ${\mathcal C}$ with initial marginal $Q_0=m$ solves the martingale
problem if and only if $F(Q)=0$ for a countable set of functionals $F$
of the form (\ref{deff}) below. Since the chaoticity of the initial
conditions implies that $\pi^\infty(\{Q\in{\mathcal P}({\mathcal
C})\dvtx Q_0=m\})=1$, checking that $\E^{\pi_\infty}\llvert F(Q)\rrvert =0$ for all $F$
in this countable set is enough to conclude that $\pi^\infty=\delta
_P$. Combined with the results of the two first steps, this ensures
that the whole sequence $(\pi^n)_n$ converges weakly to $\delta_P$
where $P$ denotes the unique solution of the martingale problem, namely
the law of the unique solution to the stochastic differential equation
(\ref{edsnonlin}). According\vspace*{1pt} to \cite{sznitman-91}, this is
equivalent to the \mbox{$P$-}chaoticity of the processes $((Y^{1,n}_t,\ldots,Y^{n,n}_t)_{t\geq0})_{n\geq1}$ and this completes the proof of
Theorem~\ref{chaos}.

Finally, Section~\ref{secprop} is devoted to the proof of
Proposition~\ref{proplimiting-acceptance}.

As already mentioned, our main result combines a diffusion
approximation and a mean-field limit. Mean-field limits apply to
systems of $n$ interacting particles (here the components $Y^{i,n}$)
when the interaction between two particles is
of order~$1/n$. At first sight, it is not obvious that this is the case
for the system considered in the paper. Nevertheless, from the above
formal computation of $\E ( \varphi(Y^{1,n}_{(k+1)/n}) |
{\mathcal F}^n_k  )$, we see in equation (\ref{pdmdisc}) that
the interaction is actually of mean-field type: the other components
influence the evolution of $Y^{1,n}_{(k+1)/n}$ only through the
empirical measure $\mu^n_{k/n}=\frac{1}{n}\sum_{i=1}^n \delta
_{Y^{i,n}_{k/n}}$. The\vspace*{1pt} mean-field limit is a law of large numbers for
the empirical measure $\mu^n$ on the path-space: we prove that $\mu
^n$ converges to the unique solution $P$ of the martingale problem. In
the same time, we have to deal with the diffusion approximation.

Notice that in previous scaling results given in the literature, the
assumption that the vector of initial positions $(X^{1,n}_0,\ldots,X^{n,n}_0)$ is distributed according to the target density makes the
derivation of both the mean-field limit and the diffusion approximation
much easier: since at subsequent times, $(X^{1,n}_k,\ldots,X^{n,n}_k)$
remains distributed according to the target density, it is enough to
identify the limiting infinitesimal generator at the initial time.
Moreover, under this stationarity assumption and when the target
density is the $n$-fold product of a fixed probability density, the
mean-field limit is obtained by the standard law of large numbers.

We end this section with the following lemma which states some basic
properties of the functions $\Gamma$ and $\cG$.

\begin{lemma}\label{propggam}
The function $\Gamma$ is continuous on $[0,+\infty]\times\R$ and
such that
%
\begin{eqnarray}
\qquad && \inf_{(a,b)\in[0,+\infty]\times[\inf V'',\sup
V'']}\Gamma(a,b) > 0,\label{mingam}
\\
&& \exists C<+\infty, \forall(a,b)\mbox{ and }
\bigl(a',b' \bigr)\in[0,+\infty]\times\R,
\nonumber\\[-8pt]\label{lipgam} \\[-8pt]
&&\qquad \bigl\llvert \Gamma(a,b)-\Gamma \bigl(a',b'
\bigr)\bigr\rrvert \leq C \bigl(\bigl\llvert b'-b\bigr\rrvert +
\bigl\llvert a'-a\bigr\rrvert +\bigl\llvert \sqrt
{a'}- \sqrt{a}\bigr\rrvert \bigr).\nonumber
\end{eqnarray}
The function $\cG$ is continuous on $\{[0,+\infty]\times\R\}
\setminus\{(0,0)\}$ and such that
%
\begin{eqnarray}
&& \forall(a,b)\in[0,+\infty]\times\R,\qquad \sqrt{a}\cG(a,b)\leq \biggl(l^2
\sqrt{b^+}\vee\frac{2l}{\sqrt{2\pi}} \biggr),\label{majog}
\\
\label{lipg} && \exists C<+\infty, \forall(a,b)\mbox{ and }
\bigl(a',b' \bigr)\in [0,+\infty]\times \bigl[\inf
V'',\sup V'' \bigr],
\\
&&\qquad \bigl({\sqrt{a}}\wedge\sqrt{a'} \bigr)\bigl\llvert \cG(a,b)-\cG
\bigl(a',b' \bigr)\bigr\rrvert\nonumber
\\
&&\qquad\qquad \leq C \bigl(\bigl\llvert b'-b\bigr\rrvert +\bigl\llvert
a'-a\bigr\rrvert +\bigl\llvert \sqrt{a'}-\sqrt{a}
\bigr\rrvert \bigr).
\nonumber
\end{eqnarray}
Last,
%
\begin{equation}
\label{bornggam} \forall(a,b)\in[0,+\infty]\times\R,\qquad 0\leq{\mathcal G}(a,b)\leq
\Gamma(a,b)\leq l^2.
\end{equation}
\end{lemma}

Notice that $\cG$ is indeed discontinuous at point $(0,0)$ since
$\lim_{b \to0^+} \cG(0,b) \neq\cG(0,0)$. The proof of this lemma
is given in the \hyperref[secapp]{Appendix}.

\section{Uniqueness for the limiting diffusion}\label{secmart}
In the present section, we are going to prove trajectorial uniqueness
for the stochastic differential equation (\ref{edsnonlin}) nonlinear
in the sense of McKean and deduce uniqueness for the following weak
formulation of this dynamics.

%
\begin{definition}\label{defmp}
Let $(Y_t)_{t\geq0}$ denote the canonical process on ${\mathcal
C}$ and recall the definition (\ref{deflmu}) of $L_\mu$. A
probability measure $P\in{\mathcal P}({\mathcal C})$ with
time-marginals $(P_t)_{t\geq0}$ solves the nonlinear martingale
problem (MP) if $P_0=m$ and for any $\varphi\dvtx \R\to\R$ $C^2$ with
compact support,
\[
\biggl(M^\varphi_t\stackrel{\mathrm{def}}{=}
\varphi(Y_t)-\int_0^t
L_{P_s}\varphi(Y_s)\,ds \biggr)_{t\geq0}\qquad \mbox{is a
$P$-martingale}.
\]
\end{definition}

This martingale problem is the weak formulation of the nonlinear
stochastic differential equation (\ref{edsnonlin}). Indeed, the law of
any solution of (\ref{edsnonlin}) solves (MP). Conversely, when $P$
solves (MP), one easily checks by Paul L\'evy's characterization (see
\cite{karatzas-shreve-88}, Theorem 3.16, page~157) that
\[
\biggl(\beta_t=\int_0^t\frac{dY_s+{\cG}(\langle P_s,(V')^2\rangle,\langle
P_s,V''\rangle)V'(Y_s)\,ds}{\sqrt{\Gamma(\langle P_s,(V')^2\rangle,\langle
P_s,V''\rangle)}} \biggr)_{t\geq0}
\]
is a $P$-Brownian
motion. Thus, this implies the existence of a weak solution with law
$P$ for
the stochastic differential equation
\begin{eqnarray}\label{sde2}
X^P_t&=&\xi+\int_0^t
\Gamma^{1/2} \bigl( \bigl\langle P_s, \bigl(V'
\bigr)^2 \bigr\rangle, \bigl\langle P_s,V''
\bigr\rangle \bigr)\,dB_s
\nonumber\\[-8pt]\\[-8pt]\nonumber
&&{} -\int_0^t{
\cG} \bigl( \bigl\langle P_s, \bigl(V'
\bigr)^2 \bigr\rangle, \bigl\langle P_s,V''
\bigr\rangle \bigr)V' \bigl(X^P_s
\bigr)\,ds.\nonumber
\end{eqnarray}
For fixed time-dependent coefficients $\Gamma^{1/2}(\langle
P_s,(V')^2\rangle,\langle P_s,V''\rangle)$ and  ${\cG}(\langle
P_s,\break (V')^2\rangle,\langle P_s,V''\rangle)$, by boundedness of $\cG$
on $[0,+\infty]\times[\inf V'',\sup V'']$ (see
Lemma~\ref{propggam} above) and
Lipschitz continuity of $V'$, it is standard to check that trajectorial
uniqueness holds for this
(linear in the sense of McKean)
stochastic differential equation. As a consequence, by the Yamada--Watanabe theorem (see
\cite{karatzas-shreve-88}, Proposition 3.20, page~309, Corollary 3.23, page~310), this linear stochastic differential
equation admits a unique strong solution and the law of this solution
is~$P$. In conclusion, one may associate
a strong solution to~(\ref{edsnonlin}) with law $P$, to any solution
$P$ of the nonlinear martingale problem (MP).

Notice that the two next sections will ensure existence for (MP) and
(\ref{edsnonlin}).
Uniqueness is ensured by the following proposition.

\begin{proposition}For any probability measure $m$ on $\R$, 
uniqueness holds for the nonlinear martingale problem (MP) and
trajectorial uniqueness holds for the stochastic differential equation
(\ref{edsnonlin}).
\label{unicmp}
\end{proposition}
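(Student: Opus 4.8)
The plan is to establish trajectorial (pathwise) uniqueness for the nonlinear SDE \eqref{edsnonlin}; the stated uniqueness for the martingale problem $(MP)$ then follows immediately, since the discussion preceding Proposition~\ref{unicmp} shows that any solution $P$ of $(MP)$ is the law of a weak solution to \eqref{edsnonlin}, and pathwise uniqueness forces all such laws to coincide. So the whole task reduces to the nonlinear SDE. Suppose $(X_t)_{t\geq 0}$ and $(\tilde X_t)_{t\geq 0}$ are two solutions driven by the \emph{same} Brownian motion $(B_t)$ and the \emph{same} initial condition $\xi$. The difficulty, compared with a classical Lipschitz SDE, is that the coefficients depend on the \emph{law} of the solution through $a(s)=\E[(V'(X_s))^2]$ and $b(s)=\E[V''(X_s)]$ (and likewise $\tilde a(s),\tilde b(s)$ for $\tilde X$), so the two solutions interact through their marginals and the argument cannot be purely trajectorial.

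The key idea is to run a Gronwall estimate simultaneously on the trajectories and on the two deterministic scalar functions $a,b$. First I would control the moments: using the boundedness of $V''$ and $V'''$ from \eqref{eq:hyp_V}, the SDE \eqref{edsnonlin} has coefficients that are bounded (diffusion coefficient $\leq l$ by \eqref{bornggam}) with a drift that is Lipschitz in $x$ once the time-dependent scalar prefactors are fixed, so $\sup_{s\leq t}\E[(V'(X_s))^2]$ and the relevant higher moments stay finite and the maps $s\mapsto a(s),b(s)$ are well defined and continuous. Then I would write, for the difference $X_t-\tilde X_t$,
\[
X_t-\tilde X_t=\int_0^t\!\big(\Gamma^{1/2}(a(s),b(s))-\Gamma^{1/2}(\tilde a(s),\tilde b(s))\big)dB_s-\int_0^t\!\big(\cG(a(s),b(s))V'(X_s)-\cG(\tilde a(s),\tilde b(s))V'(\tilde X_s)\big)ds,
\]
apply It\^o's isometry and the Cauchy--Schwarz inequality to $\E[(X_t-\tilde X_t)^2]$, and split each coefficient difference into a part coming from the scalar arguments $(a,b)$ versus $(\tilde a,\tilde b)$ and a part coming from $V'(X_s)-V'(\tilde X_s)$. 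The Lipschitz bounds for $V'$ (from bounded $V''$) and for $\Gamma,\cG$ supplied by \eqref{lipgam}, \eqref{lipg}, and \eqref{majog} in Lemma~\ref{propggam} turn every term into a constant times $|a(s)-\tilde a(s)|+|b(s)-\tilde b(s)|+|\sqrt{a(s)}-\sqrt{\tilde a(s)}|$ plus $\E[(X_s-\tilde X_s)^2]$.

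The main obstacle, and the step requiring care, is closing the loop by bounding $|a(s)-\tilde a(s)|$, $|b(s)-\tilde b(s)|$ and $|\sqrt{a(s)}-\sqrt{\tilde a(s)}|$ back in terms of $\E[(X_s-\tilde X_s)^2]$. For $b$ this is direct: $|b(s)-\tilde b(s)|=|\E[V''(X_s)-V''(\tilde X_s)]|\leq \|V'''\|_\infty\,\E|X_s-\tilde X_s|$, controlled via Jensen by $\big(\E[(X_s-\tilde X_s)^2]\big)^{1/2}$. For $a$, I would write $|a(s)-\tilde a(s)|=|\E[(V'(X_s))^2-(V'(\tilde X_s))^2]|$ and factor $(V')^2(X_s)-(V')^2(\tilde X_s)=(V'(X_s)+V'(\tilde X_s))(V'(X_s)-V'(\tilde X_s))$, using Cauchy--Schwarz together with the uniform second-moment bound on $V'$ established in the moment step (which rests on the hypothesis $\int (V')^4\,m(dx)<\infty$ through the initial condition and the linear growth $|V'(x)|\leq|V'(0)|+\|V''\|_\infty|x|$). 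The awkward term is $|\sqrt{a(s)}-\sqrt{\tilde a(s)}|$, because $t\mapsto\sqrt{t}$ is only Holder at the origin; however since $\Gamma$ and $\cG$ appear only through $\Gamma^{1/2}$ and since the Lipschitz estimates \eqref{lipgam}, \eqref{lipg} are phrased precisely in terms of $|\sqrt{a'}-\sqrt a|$, I would either absorb it by noting $|\sqrt{a}-\sqrt{\tilde a}|\leq|a-\tilde a|/(\sqrt a+\sqrt{\tilde a})$ on a region where the denominator is bounded below, or argue directly that $\Gamma^{1/2}$ and $\sqrt{a}\,\cG$ are Lipschitz in $(a,b)$ on the relevant range, so that the degenerate square-root never needs to be differentiated. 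Assembling these bounds yields, writing $\psi(t)=\sup_{s\leq t}\E[(X_s-\tilde X_s)^2]$, an inequality of the form $\psi(t)\leq C\int_0^t\psi(s)\,ds$ (the stochastic integral being handled with Doob's inequality to obtain the supremum inside). Since $\psi(0)=0$, Gronwall's lemma gives $\psi\equiv 0$, hence $X_t=\tilde X_t$ almost surely for all $t$, which is trajectorial uniqueness; uniqueness for $(MP)$ follows as explained above.
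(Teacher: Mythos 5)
Your skeleton is the paper's: reduce uniqueness for $(MP)$ to trajectorial uniqueness via the strong-solution discussion, then run It\^o plus Gronwall on $\E[(X_t-\tilde X_t)^2]$ using Lemma~\ref{propggam}, with $|b(s)-\tilde b(s)|\leq \|V^{(3)}\|_\infty\E^{1/2}[(X_s-\tilde X_s)^2]$ and the factorization of $(V')^2(X_s)-(V')^2(\tilde X_s)$ for $|a(s)-\tilde a(s)|$. But there are two genuine gaps. First, Proposition~\ref{unicmp} is asserted for \emph{any} probability measure $m$, whereas your moment step silently assumes $\langle m,(V')^2\rangle<+\infty$ and even invokes $\int(V')^4\,dm<+\infty$, which is a hypothesis of Theorem~\ref{chaos}, not of this proposition. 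When $\langle m,(V')^2\rangle=+\infty$, Lemma~\ref{momv'} gives $\E[(V'(X_t))^2]=+\infty$ for all $t$, so $a(s)\equiv\tilde a(s)\equiv+\infty$ and your Gronwall scheme built on $|a(s)-\tilde a(s)|$ is vacuous; the paper treats this case separately, observing that $\Gamma(+\infty,b)=\frac{l^2}{2}$ and $\cG(+\infty,b)=0$ force both solutions to equal $\bigl(X_0+\frac{lB_t}{\sqrt 2}\bigr)_{t\geq 0}$. This is easily repaired but must be done.

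Second, your two proposed escapes for the term $|\sqrt{a(s)}-\sqrt{\tilde a(s)}|$ both fail. There is no a priori lower bound on $a(s)$ (nothing prevents $\E[(V'(X_s))^2]$ from being small, e.g.\ for $m$ concentrated near a zero of $V'$), so the bound $|\sqrt a-\sqrt{\tilde a}|\leq |a-\tilde a|/(\sqrt a+\sqrt{\tilde a})$ cannot be invoked. And neither $\Gamma^{1/2}$ nor $\sqrt a\,\cG(a,b)$ is Lipschitz in $(a,b)$ near $a=0$: from the appendix, $\partial_a\Gamma(a,0)=\frac{l^2}{2}\cG(a,0)-\frac{l^3}{2\sqrt{2\pi a}}$ blows up like $a^{-1/2}$, and $\cG$ is even discontinuous at $(0,0)$ --- this is exactly why \eqref{lipgam} and \eqref{lipg} carry the modulus $|\sqrt{a'}-\sqrt a|$ instead of $|a'-a|$ alone. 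The paper's fix, which you circle around without finding, is the reverse triangle (Minkowski) inequality in $L^2$: $|\E^{1/2}[(V'(X_s))^2]-\E^{1/2}[(V'(\tilde X_s))^2]|\leq \E^{1/2}[(V'(X_s)-V'(\tilde X_s))^2]\leq \|V''\|_\infty\,\E^{1/2}[(X_s-\tilde X_s)^2]$, which closes the Gronwall loop with no nondegeneracy assumption on $a$. Relatedly, since \eqref{lipg} only controls $(\sqrt a\wedge\sqrt{a'})\,|\cG(a,b)-\cG(a',b')|$, the drift cross term requires the symmetrization the paper performs (estimate with $\E^{1/2}[(V'(\tilde X_s))^2]$, exchange $X$ and $\tilde X$, and keep the minimum) so that the $\cG$-difference is multiplied by the \emph{smaller} of the two second moments; your claim that the Lipschitz bounds ``turn every term into'' the desired form glosses over this, as well as over the role of the lower bound \eqref{mingam}, which is what converts the It\^o-isometry term $(\Gamma_s^{1/2}-\tilde\Gamma_s^{1/2})^2$ into $(\Gamma_s-\tilde\Gamma_s)^2$.
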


To prove Proposition~\ref{unicmp}, we need the following technical lemma.

\begin{lemma}\label{momv}
For any solution $(X_t)_{t\geq0}$ of (\ref{edsnonlin}),
\[
\forall0\leq s\leq t,\qquad \E \bigl[(X_t-X_s)^2
\bigr]\leq2l^2 \biggl[(t-s)+ \biggl(l^2\sup
\bigl(V'' \bigr)^+\vee\frac{2}{\pi} \biggr)
(t-s)^2 \biggr].
\]
Moreover, if $\langle m,(V')^2\rangle<+\infty$, then $t\mapsto\E
[(V'(X_t))^2]$ is locally bounded.
If $\langle m,(V')^2\rangle=+\infty$, then $\forall t\geq0$,
$\E[(V'(X_t))^2]=+\infty$.
\end{lemma}

\begin{pf}
Let $(X_t)_{t\geq0}$ solve (\ref{edsnonlin}). Then for $0\leq s\leq t$,
\begin{eqnarray*}
\E \bigl[(X_t-X_s)^2 \bigr]&\leq&2\E
\biggl[ \biggl(\int_s^t \Gamma^{1/2}
\bigl(\E \bigl[ \bigl(V'(X_r) \bigr)^2
\bigr],\E \bigl[V''(X_r) \bigr]
\bigr)\,dB_r \biggr)^2 \biggr]
\\
&&{}+2(t-s)\int_s^t\cG^2 \bigl(\E
\bigl[ \bigl(V'(X_r) \bigr)^2 \bigr],\E
\bigl[V''(X_r) \bigr] \bigr)\E \bigl[
\bigl(V'(X_r) \bigr)^2 \bigr]\,dr
\\
&\leq&2l^2(t-s)+2 \biggl(l^4\sup \bigl(V''
\bigr)^+\vee\frac{2l^2}{\pi} \biggr) (t-s)^2,
\end{eqnarray*}
where we used the boundedness properties of $\Gamma$ and $\sqrt{a}\cG
(a,b)$ stated in Lemma~\ref{propggam}.

One easily deduces the properties of $t\mapsto\E[(V'(X_t))^2]$ since
\begin{eqnarray*}
\bigl(V'(X_t) \bigr)^2&\geq&
\tfrac{1}{2} \bigl(V'(X_0) \bigr)^2-
\bigl(V'(X_t)-V'(X_0)
\bigr)^2
\\
&\geq&\tfrac{1}{2} \bigl(V'(\xi) \bigr)^2- \bigl
\llVert V''\bigr\rrVert _\infty^2(X_t-X_0)^2,
\\
\bigl(V'(X_t) \bigr)^2&\leq&2
\bigl(V'(X_0) \bigr)^2+2
\bigl(V'(X_t)-V'(X_0)
\bigr)^2
\\
&\leq& 2 \bigl(V'(\xi) \bigr)^2+2\bigl\llVert
V'' \bigr\rrVert _\infty^2(X_t-X_0)^2,
\end{eqnarray*}
with $\xi$ distributed according to $m$.
\end{pf}

We are now in position to prove Proposition~\ref{unicmp}.

\begin{pf*}{Proof of Proposition~\ref{unicmp}}
By the discussion following Definition~\ref{defmp}, we know that, for
a given Brownian motion $B_t$ and initial condition $\xi$, one may associate
a strong solution to (\ref{edsnonlin}) with law $P$ to any solution
$P$ of the nonlinear martingale problem (MP). Therefore, to get
uniqueness of solutions to (MP), it is enough
to prove trajectorial uniqueness for
(\ref{edsnonlin}).
Let $(X_t)_{t\geq0}$ and $(\tilde{X}_t)_{t\geq0}$ denote two
solutions of this nonlinear stochastic differential equation, with the
same initial condition, and driven by the same Brownian motion. If
$\langle m,(V')^2\rangle=+\infty$, then by Lemma~\ref{momv} and
since $\Gamma(\infty,b)=\frac{l^2}{2}$ and $\cG(\infty,b)=0$,
these two processes are equal to
$ (X_0+\frac{l B_t}{\sqrt{2}} )_{t\geq0}$. This proves
trajectorial
uniqueness.

Let us now assume that $\langle m,(V')^2\rangle<+\infty$. By
Lemma~\ref{momv},
$t\mapsto\E[(X_t-\tilde{X}_t)^2]=\E[(X_t-X_0-(\tilde{X}_t-\tilde
{X}_0))^2]$
and $t\mapsto\E[(V'(X_t))^2]\vee\E[(V'(\tilde{X}_t))^2]$ are locally
bounded. In order to simplify the notation, let us denote
\begin{eqnarray*}
\Gamma_s&=&\Gamma \bigl(\E \bigl[ \bigl(V'(X_s)
\bigr)^2 \bigr],\E \bigl[V''(X_s)
\bigr] \bigr),
\\
\tilde\Gamma_s&=&\Gamma \bigl(\E \bigl[
\bigl(V'(\tilde X_s) \bigr)^2 \bigr],\E
\bigl[V''(\tilde X_s) \bigr] \bigr)
\end{eqnarray*}
and
\begin{eqnarray*}
\cG_s &=& \cG \bigl(\E \bigl[ \bigl(V'(X_s)
\bigr)^2 \bigr],\E \bigl[V''(X_s)
\bigr] \bigr),
\\
\tilde \cG_s&=&\cG \bigl(\E \bigl[
\bigl(V'( \tilde X_s) \bigr)^2 \bigr],\E
\bigl[V''( \tilde X_s) \bigr] \bigr).
\end{eqnarray*}
Computing $(X_t-\tilde{X}_t)^2$ by It\^{o}'s formula and taking
expectations, one obtains
\begin{eqnarray}\label{eqx-tx}
\qquad \E \bigl[(X_t-\tilde{X}_t)^2 \bigr]\nonumber
&=& \int
_0^t \bigl(\Gamma^{1/2}_s-
\tilde \Gamma^{1/2}_s \bigr)^2\,ds
\nonumber\\[-8pt]\\[-8pt]
&&{} +2\E \biggl[\int_0^t \bigl(
\cG_s V'(X_s)-\tilde\cG_s
V'(\tilde{X}_s) \bigr) (\tilde{X}_s-X_s)\,ds
\biggr].\nonumber
\end{eqnarray}
One has, using (\ref{bornggam}) and the Cauchy--Schwarz inequality,
\begin{eqnarray*}
&&\E \bigl[ \bigl(\cG_s V'(X_s)-\tilde
\cG_s V'(\tilde{X}_s) \bigr) (
\tilde{X}_s-X_s) \bigr]
\\
&&\qquad =\cG_s \E \bigl[ \bigl( V'(X_s)-
V'(\tilde{X}_s) \bigr) (\tilde{X}_s-X_s)
\bigr] +(\cG_s -\tilde\cG_s) \E \bigl[
V'(\tilde X_s) (\tilde{X}_s-X_s)
\bigr]
\\
&&\qquad \le l^2 \bigl\llVert V''\bigr\rrVert
_\infty \E \bigl[ (X_s-\tilde{X}_s)^2
\bigr]+ \llvert \cG_s -\tilde\cG_s\rrvert
\E^{1/2} \bigl[ \bigl(V'(\tilde X_s)
\bigr)^2 \bigr] \E^{1/2} \bigl[ (\tilde {X}_s-X_s)^2
\bigr]
\end{eqnarray*}
which, combined with the similar inequality obtained by exchanging
$\tilde{X}$ and
$X$, yields
\begin{eqnarray*}
&& \E \bigl[ \bigl(\cG_s V'(X_s)-\tilde
\cG_s V'(\tilde{X}_s) \bigr) (
\tilde{X}_s-X_s) \bigr]
\\
&&\qquad \leq l^2 \bigl
\llVert V''\bigr\rrVert _\infty \E \bigl[
(X_s- \tilde{X}_s)^2 \bigr]
\\
&&\quad\qquad{}+\llvert \cG_s- \tilde\cG_s\rrvert \bigl(\E \bigl[
\bigl(V'({X}_s) \bigr)^2 \bigr]\wedge\E
\bigl[ \bigl(V'(\tilde{X}_s) \bigr)^2
\bigr] \bigr)^{1/2}\E ^{1/2} \bigl[(X_s-
\tilde{X}_s)^2 \bigr].
\end{eqnarray*}
Using this inequality to deal with the second term on the right-hand
side of (\ref{eqx-tx}) and (\ref{mingam}) to deal with the first one
then using the boundedness of $V''$ and (\ref{lipgam}), (\ref{lipg})
and Young's inequality, one obtains that
\begin{eqnarray*}
&& \E \bigl[(X_t-\tilde{X}_t)^2 \bigr]
\\
&&\qquad \leq
\frac{1}{4\inf_{a\geq
0,b\in[\inf V'',\sup
V'']}\Gamma(a,b)}\int_0^t (
\Gamma_s-\tilde \Gamma_s )^2\,ds
\\
&&\quad\qquad{}+2l^2\bigl\llVert V''\bigr\rrVert
_\infty\int_0^t \E
\bigl[(X_s- \tilde{X}_s)^2 \bigr]\,ds
\\
&&\quad\qquad{} +2\int_0^t\llvert \cG_s-
\tilde \cG_s\rrvert \bigl(\E \bigl[ \bigl(V'({X}_s)
\bigr)^2 \bigr]\wedge\E \bigl[ \bigl(V'(
\tilde{X}_s) \bigr)^2 \bigr] \bigr)^{1/2}
\E^{1/2} \bigl[(X_s-\tilde{X}_s)^2
\bigr]\,ds
\\
&&\qquad \leq C\int_0^t \E \bigl[(X_s-
\tilde{X}_s)^2 \bigr]+\E^2
\bigl[V''(X_s)-V''(
\tilde{X}_s) \bigr]
\\
&&\hspace*{22pt}\quad\qquad{}+\E^2 \bigl[ \bigl(V'(X_s)
\bigr)^2- \bigl(V'(\tilde {X}_s)
\bigr)^2 \bigr]
\\
&&\hspace*{22pt}\quad\qquad{} + \bigl(\E^{1/2} \bigl[ \bigl(V'(X_s)
\bigr)^2 \bigr]-\E^{1/2} \bigl[ \bigl(V'(
\tilde {X}_s) \bigr)^2 \bigr] \bigr)^2\,ds.
\end{eqnarray*}
Now, since
\begin{eqnarray*}
&& \bigl\llvert \E \bigl[V''(X_s)-V''(
\tilde{X}_s) \bigr]\bigr\rrvert \leq\bigl\llVert V^{(3)}
\bigr\rrVert _\infty \E ^{1/2} \bigl[(X_s-
\tilde{X}_s)^2 \bigr],
\\
&& \bigl\llvert \E \bigl[ \bigl(V'(X_s)
\bigr)^2- \bigl(V'(\tilde{X}_s)
\bigr)^2 \bigr]\bigr\rrvert
\\
&&\qquad \leq \bigl\llVert V''
\bigr\rrVert _\infty\E^{1/2} \bigl[(X_s-
\tilde{X}_s)^2 \bigr]
\bigl(\E^{1/2} \bigl[ \bigl(V'(X_s)
\bigr)^2 \bigr]+\E^{1/2} \bigl[ \bigl(V'(
\tilde{X}_s) \bigr)^2 \bigr] \bigr),
\\
&& \bigl\llvert \E^{1/2} \bigl[ \bigl(V'(X_s)
\bigr)^2 \bigr]-\E^{1/2} \bigl[ \bigl(V'(
\tilde{X}_s) \bigr)^2 \bigr]\bigr\rrvert
\\
&&\qquad \leq
\E^{1/2} \bigl[ \bigl(V'(X_s)-V'(
\tilde{X}_s) \bigr)^2 \bigr]
\\
&&\qquad \leq\bigl\llVert V''\bigr\rrVert _\infty
\E^{1/2} \bigl[(X_s-\tilde{X}_s)^2
\bigr],
\end{eqnarray*}
the local boundedness of $t\mapsto\E[(V'(X_t))^2]\vee\E[(V'(\tilde
{X}_t))^2]$, the local integrability of $t\mapsto\E[(X_t-\tilde
{X}_t)^2]$ and Gronwall's lemma ensure that $\forall t\geq0$, \mbox{$\E
[(X_t-\tilde{X}_t)^2]=0$}.
\end{pf*}

%
\begin{remark}
When $\langle m,(V')^2\rangle=+\infty$, we have already shown
uniqueness of
solutions to~(\ref{edsnonlin}), and it is actually easy to build a strong
solution. Indeed, since
\begin{eqnarray*}
\biggl(V' \biggl(\xi+\frac{l B_t}{\sqrt{2}} \biggr)
\biggr)^2&\geq& \frac{1}{2} \bigl(V'(\xi)
\bigr)^2-\frac{l^2\llVert  V''\rrVert  ^2_\infty B_t^2}{2},
\end{eqnarray*}
one has $\E[(V'(\xi+\frac{l B_t}{\sqrt{2}}))^2]=+\infty$ for all
$t\geq0$. As a consequence $ (\xi+\frac{l B_t}{\sqrt{2}}
)_{t\geq0}$ solves~(\ref{edsnonlin}).
\end{remark}

\section{Tightness}\label{sectight}

According to \cite{sznitman-91}, because of exchangeability, the
tightness of the sequence $(\pi^n)_n$ is equivalent to the tightness
of the laws of the processes $(Y^{1,n}_t)_{t\geq0}$. As a consequence,
the following proposition ensures that the sequence $(\pi^n)_n$ is
tight under the assumptions of Theorem~\ref{chaos}.

%
\begin{proposition}\label{proptightness}
Assume that the laws of the random variables $(X^{1,n}_0)_{n\geq1}$
are tight and that $\sup_n\E[(V'(X^{1,n}_0)^4)]<+\infty$. Then the
laws of the linearly interpolated processes $(Y^{1,n}_t=(\lceil
nt\rceil-nt)X^{1,n}_{\lfloor nt\rfloor}+(nt-\lfloor nt\rfloor
)X^{1,n}_{\lceil nt\rceil},t\geq0)_{n\geq1}$ are tight in $\mathcal C$.
Moreover,
%
\begin{equation}
\label{espvcar} t\mapsto\sup_{n\geq1}\E \bigl[
\bigl(V' \bigl(Y^{1,n}_t \bigr)
\bigr)^4 \bigr]\qquad\mbox{is locally bounded}.
\end{equation}
\end{proposition}

The proof of this proposition relies on the following estimate; the
proof of which is given after the one of the proposition.

\begin{lemma}\label{lemmom4}
Assume that $\sup_n\E[(V'(X^{1,n}_0))^4]<+\infty$. Then there exists
a finite constant $C$ depending on this supremum but not on $n$ such that
%
\begin{equation}
\label{eqmajaccroiss} \forall0\leq\underline{k}\leq\overline{k},\qquad \E \bigl(
\bigl(X^{1,n}_{{\overline{k}}}-X^{1,n}_{{\underline{k}}}
\bigr)^4 \bigr)\leq C \biggl(\frac{(\overline{k}-\underline{k})^2}{n^2}+e^{C({{\overline{k}}^4}/{n^4})}
\frac{(\overline{k}-\underline
{k})^4}{n^4} \biggr).\hspace*{-30pt}
\end{equation}
\end{lemma}

\begin{pf*}{Proof of Proposition~\ref{proptightness}}
Since the laws of the initial random variables $(X^{1,n}_0)_{n\geq1}$
are supposed to be tight, Kolmogorov criterion ensures the desired
tightness property as soon as there exists a nondecreasing function
$\gamma\dvtx \R_+\to\R_+$ such that
%
\begin{equation}
\label{majmomacc}\forall n\geq1, \forall0\leq s\leq t,\qquad \E \bigl(
\bigl(Y^{1,n}_t-Y^{1,n}_s
\bigr)^4 \bigr)\leq\gamma(t) (t-s)^2.
\end{equation}
Combining this estimation with the inequality
\[
\E \bigl[ \bigl(V' \bigl(Y^{1,n}_t \bigr)
\bigr)^4 \bigr]\leq 8\E \bigl[ \bigl(V'
\bigl(X^{1,n}_0 \bigr) \bigr)^4 \bigr]+8 \bigl
\llVert V''\bigr\rrVert ^4_\infty
\E \bigl[ \bigl(Y^{i,n}_t-Y^{i,n}_0
\bigr)^4 \bigr]
\]
one also easily checks that $t\mapsto\sup_{n\geq1}\E
[(V'(Y^{1,n}_t))^4]$ is locally bounded. Let us show how to deduce
(\ref{majmomacc}) from (\ref{eqmajaccroiss}).
For $t>s\geq0$ with $\lfloor nt\rfloor\geq\lceil ns\rceil$, using
(\ref{eqmajaccroiss}) for the second inequality, one obtains
\begin{eqnarray*}
&& \E \bigl( \bigl(Y^{1,n}_t-Y^{1,n}_s
\bigr)^4 \bigr)
\\
&&\qquad \leq 27 \E \biggl(\frac{(l(nt-\lfloor nt\rfloor)G^1_{\lceil
nt\rceil})^4}{n^2}+ \bigl(X^{1,n}_{\lfloor nt\rfloor}-X^{1,n}_{\lceil
ns\rceil}
\bigr)^4+\frac{(l(\lceil ns\rceil-ns)G^1_{\lceil ns\rceil
})^4}{n^2} \biggr)
\\
&&\qquad \leq\tilde C \biggl(\frac{(nt-\lfloor nt\rfloor)^2}{n^2}
\\
&&\hspace*{46pt}{}+ \biggl(\frac{(\lfloor nt\rfloor-\lceil ns\rceil)^2}{n^2}
+e^{Ct^4}
\frac{(\lfloor nt\rfloor-\lceil
ns\rceil)^4}{n^4} \biggr)
+\frac{(\lceil
ns\rceil-ns)^2}{n^2} \biggr)
\\
&&\qquad \leq C \bigl(1+t^2e^{Ct^4} \bigr) (t-s)^2.
\end{eqnarray*}
For $t>s\geq0$ with $\lfloor ns\rfloor=\lfloor nt\rfloor$, one has
$(nt-ns)^4\leq(nt-ns)^2$ and, therefore,
\[
\E \bigl( \bigl(Y^{1,n}_t-Y^{1,n}_s
\bigr)^4 \bigr)=\frac{l^4(nt-ns)^4}{n^2}\E \bigl( \bigl(G^1_{\lceil nt\rceil}
\bigr)^4 \bigr)\leq C(t-s)^2.
\]\upqed
\end{pf*}

The proof of Lemma~\ref{lemmom4} relies on the second inequality in
the next lemma, the proof of which is postponed to the \hyperref[secapp]{Appendix}.

\begin{lemma}\label{propdrift}
Let $x=(x_1,\ldots,x_n)\in\R^n$ and $\nu_n=\frac{1}{n}\sum_{i=1}^n\delta_{x_i}$. There exists a finite constant $C$ not
depending on $n$ and $x$ such that
%
\begin{eqnarray}
&& \E \bigl[ \bigl(e^{\sum_{i=1}^n (V(x_i)-V(x_i+({l}/{\sqrt{n}})G^i))}\wedge1\nonumber
\nonumber\\[-8pt]\label{reste}  \\[-8pt]
&&\hspace*{13pt}{} -e^{-\sum_{i=1}^n (({l}/{\sqrt{n}})V'(x_i)G^i+ ({l^2}/(2n))V''(x_i))}\wedge1
\bigr)^2 \bigr]
\leq \frac{C}{n},\nonumber
\\
&& \bigl\llvert \E \bigl(G^1 \bigl(1-e^{\sum_{i=1}^n (V(x_i)-V(x_i+({l}/{\sqrt{n}})G^i))} \bigr)^+ \bigr)
\bigr\rrvert \leq C \biggl(\frac
{\llvert V'(x_1)\rrvert }{\sqrt{n}}+\frac{1}{n}
\biggr),\label{restrev1}
\\
&& \biggl\llvert \E \bigl(G^1 \bigl(e^{\sum_{i=1}^n (V(x_i)-V(x_i+({l}/{\sqrt{n}})G^i))}\wedge1 \bigr)
\bigr)+ \frac{ V'(x_1)}{l\sqrt
{n}}{\cG} \bigl( \bigl\langle\nu_n,
\bigl(V' \bigr)^2 \bigr\rangle, \bigl\langle
\nu_n,V'' \bigr\rangle \bigr) \biggr
\rrvert\hspace*{-30pt}
\nonumber\\[-8pt]\label{restrev2} \\[-8pt]
&&\qquad  \leq C \biggl(\frac
{1+\llvert V'(x_1)\rrvert }{n}+\frac{\llvert V'(x_1)\rrvert }{n^{3/4}\langle\nu_n,(V')^2\rangle
^{1/4}}+\frac{\llvert V'(x_1)\rrvert ^{3/2}}{n^{3/4}\sqrt{\langle\nu
_n,(V')^2\rangle}}
\biggr).\nonumber
\end{eqnarray}
\end{lemma}

\begin{pf*}{Proof of Lemma~\ref{lemmom4}}
Let ${\overline{k}} >{\underline{k}}\geq0$. One has
%
\begin{eqnarray}\label{decompmom4}
&& \E \bigl( \bigl(X^{1,n}_{{\overline{k}}}-X^{1,n}_{{\underline{k}}}
\bigr)^4 \bigr)\nonumber
\\
&&\qquad \leq  \frac{8l^4}{n^2}\E \Biggl( \Biggl(\sum
_{k={\underline{k}}+1}^{{\overline{k}}}G_k^1
\Biggr)^4 \Biggr)+\frac{8l^4}{n^2}\E \Biggl( \Biggl(\sum
_{k={\underline{k}} +1}^{{\overline
{k}}}G^1_{k}1_{{\mathcal A}_k^c}
\Biggr)^4 \Biggr)
\nonumber\\[-8pt]\\[-8pt]
&&\qquad =\frac{24l^4({\overline{k}}-{\underline{k}})^2}{n^2}+\frac
{8l^4}{n^2}\sum_{{\underline{k}}+1\leq k_1,k_2,k_3,k_4\leq{\overline
{k}}}\E
\Biggl(\prod_{j=1}^4G^1_{k_j}1_{{\mathcal A}_{k_j}^c}
\Biggr)
\nonumber
\\
&&\qquad =\frac{24l^4({\overline{k}}-{\underline{k}})^2}{n^2}+\frac
{8l^4}{n^2} (T_{1,1,1,1}+T_{2,1,1}+T_{3,1}+T_{2,2}+T_4
),\nonumber
\end{eqnarray}
where the sum has been separated into five disjoint terms:
\begin{itemize}
\item$T_{1,1,1,1}$ corresponds to the restriction of the summation to
indexes $k_1$, $k_2$, $k_3$ and $k_4$ taking distinct values,
\item$T_{2,1,1}$ to the restriction to indexes such that the
cardinality of $\{k_1,k_2,k_3,k_4\}$ is equal to $3$,
\item$T_{3,1}$ to three indexes equal and the last one different,
\item$T_{2,2}$ to two pairs of equal indexes taking different values,
\item$T_4$ to four equal indexes.
\end{itemize}
One has%
\begin{eqnarray}\label{t4t22t31}
T_4+T_{2,2}+T_{3,1}
&\leq& ({\overline{k}}-{
\underline{k}})\E \bigl( \bigl(G^1_1 \bigr)^4
\bigr)+3({\overline{k}}-{\underline{k}}) ({\overline {k}}-{\underline{k}}-1)\E
\bigl( \bigl(G^1_1 \bigr)^2
\bigl(G^1_2 \bigr)^2 \bigr)\hspace*{-20pt}\nonumber
\\
&&{}+4({\overline{k}}-{\underline{k}}) ({\overline{k}}-{\underline {k}}-1)\E \bigl(
\bigl\llvert G^1_1\bigr\rrvert ^3 \bigr)\E
\bigl\llvert G^1_2\bigr\rrvert
\\
&=& 3({\overline {k}}-{\underline{k}})^2+\frac{16 ({\overline{k}}-{\underline
{k}})({\overline{k}}-{\underline{k}}-1)}{\pi}.\nonumber
\end{eqnarray}
Let us now estimate $T_{1,1,1,1}$ and $T_{2,1,1}$. For fixed $k_1$,
$k_2$, $k_3$ and $k_4$ (four integers in $\{\underline{k}+1, \ldots,\overline{k}\}$), let us define
$(\tilde{X}^{i,n}_{k},k\geq0)_{1\leq i\leq n}$ such that
$(\tilde{X}^{1,n}_{0},\ldots,\break\tilde
{X}^{n,n}_{0})=({X}^{1,n}_{0},\ldots, {X}^{n,n}_{0})$
and, for $k\geq0$ and $1\leq i\leq
n$,
\begin{eqnarray*}
\tilde{X}^{i,n}_{k+1}&=&\tilde{X}^{i,n}_k
+1_{\{k\notin\{
k_1-1,k_2-1,k_3-1,k_4-1\}\}}
\\
&&\hspace*{30pt}{}\times
\frac{l}{\sqrt{n}}G^i_{k+1}1_{ \{
U_{k+1}\leq
e^{\sum_{i=1}^n
 (V(\tilde{X}^{i,n}_k)-V  (\tilde{X}^{i,n}_k+({l}/{\sqrt{n}})G^i_{k+1} )
 )} \}}.
\end{eqnarray*}
Let us also denote by ${\mathcal F}$ the
sigma-field generated by these processes which are exchangeable,
independent of
$(U_k,(G^1_k,\ldots,G^n_k))_{k\in\{k_1,k_2,k_3,k_4\}}$ and equal to
the original processes $({X}^{i,n}_{k},k\geq1)_{1\leq i\leq n}$ on
the event
\[
\bigcap_{j=1}^4{\mathcal
A}_{k_j}^c=\bigcap_{j=1}^4
\bigl\{U_{k_j}>e^{\sum_{i=1}^n
 (V(X^{i,n}_{k_j-1})-V (X^{i,n}_{k_j-1}+({l}/{\sqrt{n}})G^i_{k_j}
 ) )} \bigr\}.
\]
When the indices $k_1,k_2,k_3$ and $k_4$ are distinct (namely for
$T_{1,1,1,1}$), by conditional independence of the vectors
$((G^1_{k_j},\ldots,G^n_{k_j},U_{k_j}))_{1\leq j\leq4}$ given
${\mathcal F}$, one has
\begin{eqnarray*}
&& \Biggl\llvert \E \Biggl(\prod_{j=1}^4G^1_{k_j}1_{{\mathcal
A}_{k_j}^c}
\Biggr) \Biggr\rrvert
\\
&&\qquad  = \Biggl\llvert \E \Biggl(\prod
_{j=1}^4G^1_{k_j}1_{ \{U_{k_j}>e^{\sum_{i=1}^n
 (V(\tilde{X}^{i,n}_{k_j-1})-V (\tilde
{X}^{i,n}_{k_j-1}+({l}/{\sqrt{n}})G^i_{k_j} )
 )} \}}\Biggr) \Biggr\rrvert
\\
&&\qquad = \Biggl\llvert \E \Biggl(\prod_{j=1}^4
\E \bigl(G^1_{k_j} \bigl(1-e^{\sum
_{i=1}^n (V(\tilde{X}^{i,n}_{k_j-1})-V(\tilde{X}^{i,n}_{k_j-1}+({l}/{\sqrt{n}})G^i_{k_j}))} \bigr)^+ |{
\mathcal F} \bigr) \Biggr) \Biggr\rrvert
\\
&&\qquad \leq\E \Biggl[\prod_{j=1}^4 \bigl
\llvert \E \bigl(G^1_{k_j} \bigl(1-e^{\sum_{i=1}^n
(V(\tilde{X}^{i,n}_{k_j-1})-V(\tilde{X}^{i,n}_{k_j-1}+({l}/{\sqrt{n}})G^i_{k_j}))} \bigr)^+
|{\mathcal F} \bigr) \bigr\rrvert \Biggr]
\\
&&\qquad \leq C\E \Biggl[\prod_{j=1}^4 \biggl(
\frac{1}{n}+\frac{\llvert V'(\tilde
{X}^{1,n}_{k_j-1})\rrvert }{\sqrt{n}} \biggr) \Biggr]
\\
&&\qquad \leq C \Biggl(
\frac
{1}{n^4}+\frac{1}{n^2}\E \Biggl[\sum_{j=1}^4
\bigl\llvert V' \bigl(\tilde {X}^{1,n}_{k_j-1}
\bigr)\bigr\rrvert ^4 \Biggr] \Biggr),
\end{eqnarray*}
where we used (\ref{restrev1}) for the last but one inequality and
Young's inequality for the last one.
Now for $k_1<k_2<k_3<k_4$, according to the above definition of
$(\tilde{X}^{i,n}_{k},k\geq0)_{1\leq i\leq n}$, the random vector
$(\tilde{X}^{1,n}_{k_j-1})_{1\leq j\leq4}$ has the same distribution
as $({X}^{1,n}_{k_j-j})_{1\leq j\leq4}$. Therefore,
\begin{eqnarray*}
T_{1,1,1,1}&\leq&4!C\sum_{\underline{k}+1\leq k_1<k_2<k_3<k_4\leq
\overline{k}} \Biggl(
\frac{1}{n^4}+\frac{1}{n^2}\E \Biggl[\sum_{j=1}^4
\bigl\llvert V' \bigl({X}^{1,n}_{k_j-j} \bigr)\bigr
\rrvert ^4 \Biggr] \Biggr)
\\
&=&4!C \Biggl(\frac{{\overline{k}-\underline{k}\choose 4}}{n^4}+\frac
{{\overline{k}-\underline{k}\choose 3}}{n^2}\sum
_{k=\underline
{k}}^{\overline{k}-4}\E \bigl[\bigl\llvert V'
\bigl({X}^{1,n}_{k} \bigr)\bigr\rrvert ^4 \bigr]
\Biggr).
\end{eqnarray*}
To deal with $T_{2,1,1}$ we remark that if, for instance, $k_2,k_3$ and
$k_4$ are distinct and $k_1=k_2$, then reasoning like above, and using
that $\E[(G^1_{k_1})^21_{{\mathcal
A}_{k_1}^c}\llvert {\mathcal F}]\leq\E[(G^1_{k_1})^2\rrvert {\mathcal F}]=1$, one obtains
\begin{eqnarray*}
&& \Biggl\llvert \E \Biggl(\prod_{j=1}^4G^1_{k_j}1_{{\mathcal
A}_{k_j}^c}\Biggr) \Biggr\rrvert
\\
&&\qquad \leq \E \Biggl[\prod_{j=3}^4
\bigl\llvert \E \bigl(G^1_{k_j} \bigl(1-e^{\sum_{i=1}^n
(V(\tilde{X}^{i,n}_{k_j-1})-V(\tilde{X}^{i,n}_{k_j-1}+({l}/{\sqrt{n}})G^i_{k_j}))}
\bigr)^+ |{\mathcal F} \bigr) \bigr\rrvert \Biggr]
\\
&&\qquad \leq C \Biggl(\frac{1}{n^2}+\frac{1}{n}\E \Biggl[\sum
_{j=3}^4\bigl\llvert V' \bigl(
\tilde{X}^{1,n}_{k_j-1} \bigr)\bigr\rrvert ^2 \Biggr]
\Biggr).
\end{eqnarray*}
One deduces that
\[
T_{2,1,1}\leq C\pmatrix{4
\cr
2} \Biggl(\frac{(\overline{k}-\underline
{k})(\overline{k}-\underline{k}-1)(\overline{k}-\underline
{k}-2)}{n^2}+
\frac{4{\overline{k}-\underline{k}\choose 2}}{n}\sum_{k=\underline{k}}^{\overline{k}-3}\E \bigl[
\bigl(V' \bigl({X}^{1,n}_{k} \bigr)
\bigr)^2 \bigr] \Biggr).
\]
By combining the estimations of $T_{3,1}+T_{2,2}+T_4$, $T_{1,1,1,1}$
and $T_{2,1,1}$ with Young's and Jensen's inequalities, one obtains that
%
\begin{eqnarray}\label{majac4}
&& \E \bigl( \bigl(X^{1,n}_{{\overline{k}}}-X^{1,n}_{{\underline
{k}}}\bigr)^4 \bigr)
\nonumber\\[-8pt]\\[-8pt]
&&\qquad \leq C \Biggl(\frac{(\overline{k}-\underline
{k})^2}{n^2}+\frac{(\overline{k}-\underline{k})^4}{n^6}+
\frac{(\overline{k}-\underline{k})^3}{n^4}\sum_{k=\underline
{k}}^{\overline{k}-1}\E \bigl[
\bigl(V' \bigl({X}^{1,n}_{k} \bigr)
\bigr)^4 \bigr] \Biggr).\nonumber
\end{eqnarray}
For the choice $\underline{k}=0$ and using $\sup_{n}\E
[(V'(X^{1,n}_0))^4]<+\infty$,%
\begin{equation}
\qquad\E \bigl[ \bigl(V' \bigl(X^{1,n}_k \bigr)
\bigr)^4 \bigr]\leq8\E \bigl[ \bigl(V'
\bigl(X^{1,n}_0 \bigr) \bigr)^4 \bigr]+8\bigl
\llVert V'' \bigr\rrVert _\infty^4
\E \bigl[ \bigl(X^{1,n}_k-X^{1,n}_0
\bigr)^4 \bigr]\label{eqvkvo},
\end{equation}
one obtains that
\[
\E \bigl( \bigl(X^{1,n}_{{\overline{k}}}-X^{1,n}_{0}
\bigr)^4 \bigr)\leq C \Biggl(\frac{{\overline{k}}^2}{n^2}+\frac{{\overline {k}}^4}{n^4}+
\frac{{\overline{k}}^3}{n^4}\sum_{k=0}^{\overline{k}-1}\E \bigl(
\bigl(X^{1,n}_{k}-X^{1,n}_{0}
\bigr)^4 \bigr) \Biggr).
\]
By a discrete version of Gronwall's lemma, one deduces that
\[
\forall{k}\geq0,\qquad \E \bigl( \bigl(X^{1,n}_{{{k}}}-X^{1,n}_{0}
\bigr)^4 \bigr)\leq Ce^{C({k^4}/{n^4})} \biggl(\frac{{{k}}^2}{n^2}\vee
\frac{{{k}}^4}{n^4} \biggr)\leq Ce^{C({k^4}/n^4)}.
\]
With (\ref{majac4}) and (\ref{eqvkvo}), one concludes that (\ref
{eqmajaccroiss}) holds.
\end{pf*}


\section{Identification of the limits of converging subsequences of \texorpdfstring{$(\pi^n)_{n\ge 1}$}{$(pi^n)_{n>= 1}$}}\label{seclimit}
From the previous section, we know that the sequence $(\pi^n)_n$ is tight.
Let $\pi^\infty$ denote the limit of a converging subsequence of
$(\pi^n)_n$ that we still index by $n$ for notational simplicity. We
want to prove that $\pi^\infty$ gives full weight to the solutions of
the nonlinear martingale problem (MP) (see Definition~\ref{defmp}).
To do so, for $\varphi\dvtx \R\to\R$ $C^3$ with compact support, $p\in
{\mathbb N}$, $g\dvtx \R^p\to\R$ continuous and bounded and $0\leq
s_1\leq s_2\leq\cdots\leq s_p\leq s\leq t$, we define
%
\begin{equation}\label{deff}
F\dvtx Q\in{\mathcal P}({\mathcal C})\mapsto \biggl\langle Q, \biggl(\varphi
(Y_t)-\varphi(Y_s)-\int_s^t
L_{Q_r}\varphi(Y_r)\,dr \biggr)g(Y_{s_1},
\ldots,Y_{s_p}) \biggr\rangle.\hspace*{-30pt}
\end{equation}
Since the chaoticity of the initial conditions implies that $\pi
^\infty(\{Q\in{\mathcal P}({\mathcal C})\dvtx Q_0=m\})=1$, to prove that
$\pi^\infty$ gives full weight to the solutions of
(MP), it is enough to check that $\E^{\pi^\infty}\llvert F(Q)\rrvert =0$. Indeed,
taking $g$ in a countable subset of the space of continuous functions
with compact support on $\R^p$ dense for the uniform convergence and
$(s_1,\ldots,s_p)$ in a countable dense subset of $[0,s]$, one obtains
\[
\pi^\infty \biggl( \biggl\{Q\in{\mathcal P}({\mathcal C})\dvtx
\E^Q \biggl(\varphi(Y_t)-\varphi(Y_s)-\int
_s^tL_{Q_r}\varphi(Y_r)\,dr
\Big|(Y_u)_{u\in[0,s]} \biggr)=0 \biggr\} \biggr)=1.
\]
Then taking $s,t$ in a countable dense subset of $\R_+$ and $\varphi$
in a countable subset of~$C^3$ functions with compact support on $\R$
dense in the space $C^2_c(\R)$ of $C^2$ functions with compact support
on $\R$ for the uniform convergence of the function and its
derivatives up to the order $2$, one concludes that
\[
\pi^\infty \biggl( \biggl\{Q\dvtx \forall\varphi\in C^2_c(
\R), \biggl(\varphi(Y_t)-\int_0^tL_{Q_r}
\varphi(Y_r)\,dr \biggr)_{t\geq0}\mbox{ is a $Q$-martingale}
\biggr\} \biggr)=1.
\]
In
Section~\ref{seclimit1}, we present the main steps of the
proof. Then, in Sections~\ref{seclimit2} and~\ref{seclimit3}, we
provide the proofs of the technical propositions stated and used in
Section~\ref{seclimit1}.

\subsection{Proof of \texorpdfstring{$\mathbb{E}^{\pi^\infty}|F(Q)|=0$}{$\mathbb{E}^{pi^{infty}}|F(Q)|=0$}}\label{seclimit1}
By combining the two next propositions, one first obtains the
asymptotic behavior of $\E^{\pi^n}\llvert F(Q)\rrvert =\E\llvert F(\mu^n)\rrvert $ as $n\to
\infty$.

%
\begin{proposition}\label{intmart}Let
\begin{eqnarray*}
M^{i,n}_k&=&\frac{l}{\sqrt{n}}\sum
_{j=0}^{k-1}\varphi '
\bigl(X^{i,n}_{j} \bigr) \bigl(G^i_{j+1}1_{{\mathcal
A}_{j+1}}-
\E \bigl[G^{i}_{j+1}1_{{\mathcal A}_{j+1}}|{\mathcal
F}^n_{j} \bigr] \bigr)
\\
&&{}+\frac{l^2}{2n}\sum_{j=0}^{k-1}\varphi
'' \bigl(X^{i,n}_{j} \bigr)
\bigl( \bigl(G^i_{j+1} \bigr)^21_{{\mathcal
A}_{j+1}}-
\E \bigl[ \bigl(G^{i}_{j+1} \bigr)^21_{{\mathcal A}_{j+1}}|{
\mathcal F}^n_{j} \bigr] \bigr).
\end{eqnarray*}
Under the assumptions of Theorem~\ref{chaos}, for all $s < t, \exists
C<\infty, \forall n\geq1$,
\[
\sup_{1\leq i\leq
n}\E \biggl\llvert \varphi \bigl(Y^{i,n}_t
\bigr)-\varphi \bigl(Y^{i,n}_s \bigr)-\int
_s^tL_{\mu
^n_r}\varphi
\bigl(Y^{i,n}_r \bigr)\,dr- \bigl(M^{i,n}_{\lceil
nt\rceil}-M^{i,n}_{\lceil ns\rceil}
\bigr) \biggr\rrvert \leq\frac{C}{n^{1/4}},
\]
where $\mu^n_r$
denotes the marginal at time $r$ of $\mu^n$ [defined by~(\ref{eqmun})].
\end{proposition}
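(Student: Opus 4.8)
The plan is to establish the estimate at the discrete grid times and transfer it to the interpolated process, comparing the increment of $\varphi$ along the chain with the corresponding martingale increment and identifying the difference with the generator term through Lemma~\ref{propdrift}. By exchangeability of the system $((Y^{1,n}_t,\dots,Y^{n,n}_t))_n$, the law of the quantity inside the expectation does not depend on $i$, so it suffices to bound it for a single fixed index; exchangeability will moreover be the decisive tool in the final error estimate. First I would pass from the interpolated process to the grid: since $Y^{i,n}_t-X^{i,n}_{\lceil nt\rceil}=-(\lceil nt\rceil-nt)\frac{l}{\sqrt n}G^i_{\lceil nt\rceil}1_{{\mathcal A}_{\lceil nt\rceil}}$ and $\varphi$ is Lipschitz, replacing $\varphi(Y^{i,n}_t)$ and $\varphi(Y^{i,n}_s)$ by $\varphi(X^{i,n}_{\lceil nt\rceil})$ and $\varphi(X^{i,n}_{\lceil ns\rceil})$ costs only ${\mathcal O}(n^{-1/2})$ in $L^1$.

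Next I would telescope $\varphi(X^{i,n}_{\lceil nt\rceil})-\varphi(X^{i,n}_{\lceil ns\rceil})=\sum_{j=\lceil ns\rceil}^{\lceil nt\rceil-1}(\varphi(X^{i,n}_{j+1})-\varphi(X^{i,n}_j))$ and Taylor-expand each increment to second order in the step $X^{i,n}_{j+1}-X^{i,n}_j=\frac{l}{\sqrt n}G^i_{j+1}1_{{\mathcal A}_{j+1}}$. Using $\|\varphi^{(3)}\|_\infty<\infty$ and $\E|G^i_{j+1}|^3<\infty$, the third-order remainder is ${\mathcal O}(n^{-3/2})$ per step, hence ${\mathcal O}(n^{-1/2})$ after summing the ${\mathcal O}(n)$ terms. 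By the very definition of $M^{i,n}$, subtracting $M^{i,n}_{j+1}-M^{i,n}_j$ leaves exactly the two compensators $\frac{l}{\sqrt n}\varphi'(X^{i,n}_j)\E[G^i_{j+1}1_{{\mathcal A}_{j+1}}|{\mathcal F}^n_j]$ and $\frac{l^2}{2n}\varphi''(X^{i,n}_j)\E[(G^i_{j+1})^2 1_{{\mathcal A}_{j+1}}|{\mathcal F}^n_j]$. Since the $G^i_{j+1}$ are independent of ${\mathcal F}^n_j$, \eqref{restrev2} applies conditionally with $x=(X^{1,n}_j,\dots,X^{n,n}_j)$, giving $\E[G^i_{j+1}1_{{\mathcal A}_{j+1}}|{\mathcal F}^n_j]=-\frac{V'(X^{i,n}_j)}{l\sqrt n}\cG(\langle\nu^n_j,(V')^2\rangle,\langle\nu^n_j,V''\rangle)+\varepsilon^{\mathrm{dr}}_j$, while the explicit Gaussian computation underlying \eqref{eq:diff}, combined with \eqref{reste} and Cauchy--Schwarz on the fourth moment of $G^i_{j+1}$, gives $\E[(G^i_{j+1})^2 1_{{\mathcal A}_{j+1}}|{\mathcal F}^n_j]=\frac{1}{l^2}\Gamma(\langle\nu^n_j,(V')^2\rangle,\langle\nu^n_j,V''\rangle)+\varepsilon^{\mathrm{diff}}_j$ with $\E|\varepsilon^{\mathrm{diff}}_j|={\mathcal O}(n^{-1/2})$. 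Plugging these in, the two leading terms combine into $\frac1n L_{\nu^n_j}\varphi(X^{i,n}_j)$ by the definition~\eqref{deflmu}, since $\nu^n_j=\mu^n_{j/n}$ and $X^{i,n}_j=Y^{i,n}_{j/n}$.

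Summing over $j$ produces a Riemann sum which I would compare with $\int_s^t L_{\mu^n_r}\varphi(Y^{i,n}_r)\,dr$: over each interval $[j/n,(j+1)/n)$ the path $Y^{i,n}_r$ moves by ${\mathcal O}(n^{-1/2}|G^i_{j+1}|)$ and, by the Lipschitz estimates \eqref{lipgam}--\eqref{lipg}, the integrand varies accordingly, so the Riemann-sum error is ${\mathcal O}(n^{-1/2})$, the two incomplete boundary intervals contributing only ${\mathcal O}(n^{-1})$ (using \eqref{majog} together with exchangeability to keep the possibly singular $\cG$ term integrable). The diffusion error $\sum_j\frac{l^2}{2n}\varphi''(X^{i,n}_j)\varepsilon^{\mathrm{diff}}_j$ is likewise ${\mathcal O}(n^{-1/2})$.

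The main obstacle is the drift error $\sum_j\frac{l}{\sqrt n}\varphi'(X^{i,n}_j)\varepsilon^{\mathrm{dr}}_j$, which sets the $n^{-1/4}$ rate. By \eqref{restrev2}, $|\varepsilon^{\mathrm{dr}}_j|\le C\big(\frac{1+|V'(X^{i,n}_j)|}{n}+\frac{|V'(X^{i,n}_j)|}{n^{3/4}\langle\nu^n_j,(V')^2\rangle^{1/4}}+\frac{|V'(X^{i,n}_j)|^{3/2}}{n^{3/4}\langle\nu^n_j,(V')^2\rangle^{1/2}}\big)$. The first summand, times $\frac{l}{\sqrt n}\|\varphi'\|_\infty$ and summed over ${\mathcal O}(n)$ steps, is ${\mathcal O}(n^{-1/2})$ once $\sup_n\E[(V'(X^{1,n}_j))^2]$ is controlled locally uniformly via \eqref{espvcar}. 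For the last two summands the pointwise bound $\langle\nu^n_j,(V')^2\rangle\ge\frac1n(V'(X^{i,n}_j))^2$ is far too lossy; instead I would take expectations and symmetrise over the exchangeable indices, writing $\E\big[\frac{|V'(X^{i,n}_j)|}{\langle\nu^n_j,(V')^2\rangle^{1/4}}\big]=\E\big[\frac{\langle\nu^n_j,|V'|\rangle}{\langle\nu^n_j,(V')^2\rangle^{1/4}}\big]$ and $\E\big[\frac{|V'(X^{i,n}_j)|^{3/2}}{\langle\nu^n_j,(V')^2\rangle^{1/2}}\big]=\E\big[\frac{\langle\nu^n_j,|V'|^{3/2}\rangle}{\langle\nu^n_j,(V')^2\rangle^{1/2}}\big]$. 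Jensen's inequality applied to the empirical measure ($\langle\nu,|V'|\rangle\le\langle\nu,(V')^2\rangle^{1/2}$ and $\langle\nu,|V'|^{3/2}\rangle\le\langle\nu,(V')^2\rangle^{3/4}$) reduces both expressions to $\E[\langle\nu^n_j,(V')^2\rangle^{1/4}]\le\E[(V'(X^{1,n}_j))^2]^{1/4}$, locally bounded by \eqref{espvcar}. Hence each contributes $\frac{1}{\sqrt n}\cdot n\cdot\frac{C}{n^{3/4}}={\mathcal O}(n^{-1/4})$. Collecting the interpolation, Taylor, diffusion and Riemann-sum errors at order $n^{-1/2}$ with this dominant $n^{-1/4}$ drift contribution yields the claimed bound, uniformly in $i$ by exchangeability.
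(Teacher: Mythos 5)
Your proposal is correct and takes essentially the same route as the paper's proof: the same decomposition into martingale part, generator term, drift/diffusion compensator errors, time-discretization (Riemann-sum) error and boundary contributions, with the same key inputs \eqref{reste}--\eqref{restrev2}, \eqref{lipgam}--\eqref{lipg} and \eqref{espvcar}, and with exchangeability used in the same decisive way to tame the singular empirical denominators $\langle \nu^n_j,(V')^2\rangle$ coming from \eqref{restrev2}, which is what produces the dominant ${\mathcal O}(n^{-1/4})$ rate. The only cosmetic differences are that you telescope on the grid after an ${\mathcal O}(n^{-1/2})$ interpolation correction, where the paper Taylor-expands $\varphi'$ along the continuous interpolation (producing its explicit boundary terms $T^{i,n}_4$, $T^{i,n}_5$), and that you bound $\E\bigl[|V'(X^{i,n}_j)|\,\langle\nu^n_j,(V')^2\rangle^{-1/4}\bigr]$ by symmetrizing to the empirical mean and applying Jensen's inequality, where the paper uses H\"older's inequality together with the exchangeability identity \eqref{consexch}; both devices are equivalent in force.
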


%
\begin{proposition}\label{contmart}Under the assumptions of Theorem
\ref{chaos},
\[
\exists C<\infty, \forall n\geq1,\qquad \E \Biggl[ \Biggl(\frac{1}{n}\sum
_{i=1}^n \bigl(M^{i,n}_{\lceil nt\rceil}-M^{i,n}_{\lceil ns\rceil
}
\bigr)g \bigl(Y^{i,n}_{s_1},\ldots,Y^{i,n}_{s_p}
\bigr) \Biggr)^2 \Biggr]\leq\frac
{C}{\sqrt{n}}.
\]
\end{proposition}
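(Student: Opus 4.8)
The plan is to exploit the martingale structure in the time index together with a mean-field decorrelation between distinct particles. Write $\Delta M^{i,n}=M^{i,n}_{\lceil nt\rceil}-M^{i,n}_{\lceil ns\rceil}=\sum_{j=\lceil ns\rceil}^{\lceil nt\rceil-1}\delta^{i,n}_j$, where
\[
\delta^{i,n}_j=\frac{l}{\sqrt n}\varphi'(X^{i,n}_j)\eta^i_j+\frac{l^2}{2n}\varphi''(X^{i,n}_j)\zeta^i_j,
\]
with $\eta^i_j=G^i_{j+1}1_{\mathcal A_{j+1}}-\E[G^i_{j+1}1_{\mathcal A_{j+1}}\mid\mathcal F^n_j]$ and $\zeta^i_j=(G^i_{j+1})^21_{\mathcal A_{j+1}}-\E[(G^i_{j+1})^21_{\mathcal A_{j+1}}\mid\mathcal F^n_j]$, so that $\E[\delta^{i,n}_j\mid\mathcal F^n_j]=0$. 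Since $s_p\le s$, the weights $g_i:=g(Y^{i,n}_{s_1},\dots,Y^{i,n}_{s_p})$ are $\mathcal F^n_{\lceil ns\rceil}$-measurable; hence $\big(\tfrac1n\sum_i(M^{i,n}_k-M^{i,n}_{\lceil ns\rceil})g_i\big)_{k\ge\lceil ns\rceil}$ is an $(\mathcal F^n_k)$-martingale started at $0$, and its increments are orthogonal in $L^2$. Consequently
\[
\E\left[\left(\frac1n\sum_i\Delta M^{i,n}g_i\right)^2\right]=\sum_{j=\lceil ns\rceil}^{\lceil nt\rceil-1}\frac1{n^2}\sum_{i,i'}\E\Big[g_ig_{i'}\,\E[\delta^{i,n}_j\delta^{i',n}_j\mid\mathcal F^n_j]\Big].
\]
I would first dispose of the diagonal terms $i=i'$: since $\varphi',\varphi''$ and $g$ are bounded and $G^i_{j+1}1_{\mathcal A_{j+1}}$, $(G^i_{j+1})^21_{\mathcal A_{j+1}}$ have bounded conditional second moments, $\E[(\delta^{i,n}_j)^2\mid\mathcal F^n_j]\le C/n$, so the diagonal contribution is at most $\frac1{n^2}\cdot n\cdot(\lceil nt\rceil-\lceil ns\rceil)\cdot\frac Cn=O(1/n)$.

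The crux is the off-diagonal part $i\neq i'$, which must be shown to be $O(n^{-1/2})$: here $\delta^{i,n}_j$ and $\delta^{i',n}_j$ are correlated only through the common acceptance indicator $1_{\mathcal A_{j+1}}$. Writing $W^m=V(X^{m,n}_j)-V(X^{m,n}_j+\frac{l}{\sqrt n}G^m_{j+1})$, so that $\mathcal A_{j+1}=\{U_{j+1}\le\exp(\sum_m W^m)\}$, I would introduce the modified event $\mathcal A^{i,i'}_{j+1}=\{U_{j+1}\le\exp(\sum_{m\neq i,i'}W^m)\}$ obtained by removing the contributions of particles $i$ and $i'$. Given $\mathcal F^n_j$, $(G^m_{j+1})_{m\neq i,i'}$ and $U_{j+1}$, the indicator $1_{\mathcal A^{i,i'}_{j+1}}$ is measurable while $G^i_{j+1},G^{i'}_{j+1}$ remain independent standard normals; as they enter the leading part of $\delta$ linearly, $\E[G^i_{j+1}G^{i'}_{j+1}1_{\mathcal A^{i,i'}_{j+1}}\mid\mathcal F^n_j]=0$. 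Hence, with $R=1_{\mathcal A_{j+1}}-1_{\mathcal A^{i,i'}_{j+1}}$ and using $1_{\mathcal A_{j+1}}^2=1_{\mathcal A_{j+1}}$, the leading $\eta^i_j\eta^{i'}_j$ cross-correlation reduces to $\E[G^i_{j+1}G^{i'}_{j+1}R\mid\mathcal F^n_j]$, up to the product of the two single-particle drifts $\E[G^i_{j+1}1_{\mathcal A_{j+1}}\mid\mathcal F^n_j]\,\E[G^{i'}_{j+1}1_{\mathcal A_{j+1}}\mid\mathcal F^n_j]$, which is $O(1/n)$ (with $V'$ weights) by the drift estimate \eqref{restrev2}.

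To bound the first piece I would use that $s\mapsto e^s\wedge1$ is $1$-Lipschitz, so that after integrating out $U_{j+1}$ one has $\E[|R|\mid\mathcal F^n_j,\,\text{Gaussians}]=\big|e^{\sum_m W^m}\wedge1-e^{\sum_{m\neq i,i'}W^m}\wedge1\big|\le|W^i+W^{i'}|$; combined with $|W^m|\le\frac{l}{\sqrt n}|G^m_{j+1}||V'(X^{m,n}_j)|+\frac{l^2}{2n}\|V''\|_\infty (G^m_{j+1})^2$ and Cauchy--Schwarz against $|G^i_{j+1}G^{i'}_{j+1}|$, this gives $|\E[G^i_{j+1}G^{i'}_{j+1}R\mid\mathcal F^n_j]|\le\frac{C}{\sqrt n}\big(1+|V'(X^{i,n}_j)|+|V'(X^{i',n}_j)|\big)$. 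Thus $|\E[\eta^i_j\eta^{i'}_j\mid\mathcal F^n_j]|\le\frac{C}{\sqrt n}\big(1+|V'(X^{i,n}_j)|+|V'(X^{i',n}_j)|\big)$ up to an $O(1/n)$ drift product. The same coupling, where the $1_{\mathcal A^{i,i'}_{j+1}}$ terms again vanish because $\E[G^i_{j+1}]=0$, controls the mixed $\eta\zeta$ correlation by $O(n^{-1/2})$, while the $\zeta\zeta$ correlation may simply be bounded by a constant since it carries the small prefactor $\frac{l^4}{4n^2}$. Multiplying the leading $\eta\eta$ estimate by $\frac{l^2}{n}\|\varphi'\|_\infty^2\|g\|_\infty^2$, taking expectations, and using that $t\mapsto\sup_n\E[(V'(Y^{1,n}_t))^4]$ is locally bounded by \eqref{espvcar} (so that $\E|V'(X^{i,n}_j)|$ is uniformly bounded for $j\le\lceil nt\rceil$, by exchangeability), each off-diagonal summand is $O(n^{-3/2})$; summing $O(n^2)$ pairs and $O(n)$ time indices against the prefactor $n^{-2}$ yields $O(n^{-1/2})$, which dominates the $O(1/n)$ diagonal contribution and gives the claim.

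The main obstacle is precisely this off-diagonal estimate: one must quantify that the shared accept/reject decision decorrelates distinct components up to order $n^{-1/2}$. The coupling through $\mathcal A^{i,i'}_{j+1}$, which makes the leading cross-term vanish exactly and leaves only the $O(1/\sqrt n)$ boundary discrepancy $R$, is the device that converts the mean-field weakness of the interaction (each component influences $\mathcal A_{j+1}$ only at order $1/\sqrt n$) into a quantitative bound.
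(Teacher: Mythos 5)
Your proof is correct, but it reaches the key off-diagonal estimate by a genuinely different mechanism than the paper's. The paper performs the same initial reduction (martingale orthogonality in the time index plus ${\mathcal F}^n_{\lceil ns\rceil}$-measurability of the $g$-weights, see \eqref{sumespcond}), but then treats the conditional covariance $\E[G^i_{k+1}G^j_{k+1}1_{{\mathcal A}_{k+1}}|{\mathcal F}^n_k]-\E[G^i_{k+1}1_{{\mathcal A}_{k+1}}|{\mathcal F}^n_k]\,\E[G^j_{k+1}1_{{\mathcal A}_{k+1}}|{\mathcal F}^n_k]$ by first linearizing the acceptance exponent at $L^2$-cost $C/\sqrt{n}$ via \eqref{reste}, and then evaluating the resulting Gaussian expectations \emph{exactly} through \eqref{calesp1} and \eqref{calesp4} (i.e.\ \eqref{espcondprod}): for $i\neq j$ the exact covariance is of order $|V'(X^{i,n}_k)V'(X^{j,n}_k)|/n$, up to a factor $\langle\mu^n_{k/n},(V')^2\rangle^{-1/2}$ which is integrated using the exchangeability identity \eqref{consexch}. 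Your coupling with the modified event ${\mathcal A}^{i,i'}_{j+1}$ replaces this entire computation: conditional independence kills the leading cross-term exactly, and the $1$-Lipschitz property of $x\mapsto e^x\wedge 1$ prices the discrepancy at $O(n^{-1/2})\bigl(1+|V'(X^{i,n}_j)|+|V'(X^{i',n}_j)|\bigr)$. Your per-pair bound is therefore weaker than the paper's ($n^{-1/2}$ instead of $n^{-1}$), but harmlessly so: after the prefactor $l^2/n$ and the $n^{-2}\times n^2\times O(n)$ summation both arguments land at $O(n^{-1/2})$ --- indeed in the paper the dominating term $(\lceil nt\rceil-\lceil ns\rceil)/n^{3/2}$ already comes from the linearization step \eqref{reste}, not from the exact covariance. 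What your route buys is elementarity and robustness: it needs no explicit Gaussian integral from Lemma \ref{calesp} and no \eqref{consexch}, only moment bounds, so it would survive non-Gaussian proposal increments; what the paper's route buys is the sharp $O(1/n)$ per-pair covariance and the reuse of machinery (\eqref{reste}, \eqref{espcondprod}) needed anyway for Proposition \ref{intmart}. One small adjustment: for the product of single-particle drifts, citing \eqref{restrev1} (or running your coupling once more with a single particle removed) is cleaner than \eqref{restrev2}, whose error terms carry negative powers of $\langle\nu_n,(V')^2\rangle$ that would force you back to a \eqref{consexch}-type identity; with \eqref{restrev1} the product is directly of order $|V'(X^{i,n}_j)V'(X^{i',n}_j)|/n$ plus lower-order terms, which your appeal to \eqref{espvcar} and exchangeability handles.
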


Since
\[
F\bigl(\mu^n\bigr)=\frac{1}{n}\sum_{i=1}^n  \biggl(\varphi
\bigl(Y^{i,n}_t\bigr)-\varphi\bigl(Y^{i,n}_s\bigr)-\int_s^t L_{\mu^n_r}\varphi
\bigl(Y^{i,n}_r\bigr)\,dr \biggr)g\bigl(Y^{i,n}_{s_1},\ldots,Y^{i,n}_{s_p}\bigr),
\]
one has
\begin{eqnarray*}
&& \E\bigl\llvert F \bigl(\mu^n \bigr)\bigr\rrvert
\\
&&\qquad \leq
\frac{\llVert  g\rrVert  _\infty}{n}
\sum_{i=1}^n\E \biggl
\llvert \varphi \bigl(Y^{i,n}_t \bigr)-\varphi
\bigl(Y^{i,n}_s \bigr)-\int_s^tL_{\mu^n_r}
\varphi \bigl(Y^{i,n}_r \bigr)\,dr- \bigl(M^{i,n}_{\lceil nt\rceil}-M^{i,n}_{\lceil ns\rceil
}\bigr) \biggr\rrvert
\\
&&\quad\qquad{}+\E^{1/2} \Biggl[ \Biggl(\frac{1}{n}\sum
_{i=1}^n \bigl(M^{i,n}_{\lceil nt\rceil}-M^{i,n}_{\lceil ns\rceil}
\bigr)g \bigl(Y^{i,n}_{s_1},\ldots,Y^{i,n}_{s_p}
\bigr) \Biggr)^2 \Biggr].
\end{eqnarray*}
One deduces that
%
\begin{equation}
\label{limnul} \lim_{n\to\infty}\E^{\pi^n}\bigl\llvert F(Q)
\bigr\rrvert =0.
\end{equation}
Since $g$, $\cG$, $\Gamma$ and $V'\varphi'$ are bounded, the
function $F$ is bounded. Unfortunately, when $V'$ is not bounded, the
lack of continuity of $\mu\in{\mathcal P}(\R)\mapsto\langle\mu,(V')^2\rangle$ implies that $F$ is not continuous and the weak
convergence of $\pi^n$ to $\pi^\infty$ does not directly ensure that
$\E^{\pi^\infty}\llvert F(Q)\rrvert =0$.

To overcome this difficulty, for $k\in\tN$, we introduce the
second-order differential operator $L^k_\mu$ defined like $L_\mu$ in
(\ref{deflmu}) but with $\langle\mu,(V')^2\wedge k\rangle$
replacing $\langle\mu,(V')^2\rangle$. We also define $F_k$ like $F$
but with $L_{Q_r}$ replaced by $L^k_{Q_r}$. The functions $F_k$ are
uniformly bounded and converge pointwise to $F$ by the properties of
$\cG$ and $\Gamma$ stated in Lemma~\ref{propggam}. Moreover, $F_k$
is continuous. Indeed, to deal with the discontinuity of~$\cG$ at
$(0,0)$, it is enough to remark that for $\nu,\mu\in{\mathcal P}(\R)$,
\begin{eqnarray*}
&& \bigl\langle\nu,\bigl\llvert {\cG} \bigl( \bigl
\langle\nu,\bigl(V' \bigr)^2\wedge k \bigr
\rangle, \bigl\langle \nu,V'' \bigr\rangle \bigr)-{
\cG} \bigl( \bigl\langle\mu, \bigl(V' \bigr)^2\wedge k
\bigr\rangle, \bigl\langle \mu,V'' \bigr\rangle \bigr)
\bigr\rrvert \times\bigl\llvert V' \varphi'\bigr
\rrvert \bigr\rangle
\\
&&\qquad \leq 1_{\{\langle\mu,(V')^2\wedge
k\rangle>0\}}\bigl\llVert V' \varphi'\bigr
\rrVert _\infty
\\
&&\quad\qquad{}\times \bigl\llvert {\cG} \bigl( \bigl\langle\nu,
\bigl(V' \bigr)^2\wedge k \bigr\rangle, \bigl\langle
\nu,V'' \bigr\rangle \bigr)-{\cG} \bigl( \bigl\langle
\mu, \bigl(V' \bigr)^2\wedge k \bigr\rangle, \bigl
\langle \mu,V'' \bigr\rangle \bigr)\bigr\rrvert
\\
&&\quad\qquad{}+1_{\{\langle\mu,(V')^2\wedge
k\rangle=0\}} 2 l^2 \bigl\langle\nu-\mu,\bigl\llvert
V' \varphi'\bigr\rrvert \bigr\rangle,
\end{eqnarray*}
where we used in the last line the fact that $1_{\{\langle\mu,(V')^2\wedge
k\rangle=0\}} \langle\mu,\llvert V'\varphi'\rrvert \rangle=0$.
As a consequence,
\begin{eqnarray*}
\E^{\pi^\infty}\bigl\llvert F(Q)\bigr\rrvert &=&\lim_{k\to\infty}
\E^{\pi^\infty
}\bigl\llvert F_k(Q)\bigr\rrvert = \lim
_{k\to\infty}\lim_{n\to\infty}\E^{\pi^n}\bigl
\llvert F_k(Q)\bigr\rrvert
\\
&\leq& \limsup_{k\to\infty}\limsup_{n\to\infty}
\E^{\pi^n}\bigl\llvert F_k(Q)-F(Q)\bigr\rrvert,
\end{eqnarray*}
where we used (\ref{limnul}) for the inequality.
One concludes that $\E^{\pi^\infty}\llvert F(Q)\rrvert =0$ by the next proposition.

%
\begin{proposition}\label{propffk}Under the assumptions of Theorem
\ref{chaos},
\[
\lim_{k\to\infty}\sup_{n\geq1}\E\bigl\llvert
F_k \bigl(\mu^n \bigr)-F \bigl(\mu^n \bigr)
\bigr\rrvert =0.
\]
\end{proposition}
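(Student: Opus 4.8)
The plan is to write the difference $F_k(\mu^n)-F(\mu^n)$ explicitly and to exploit that the only effect of the truncation is to replace $\langle\mu^n_r,(V')^2\rangle$ by $\langle\mu^n_r,(V')^2\wedge k\rangle$ inside the coefficients $\Gamma$ and $\cG$ of the generator \eqref{deflmu}, these coefficients being common to all particles. Writing $g_i=g(Y^{i,n}_{s_1},\dots,Y^{i,n}_{s_p})$, $A^n_r=\langle\mu^n_r,(V')^2\rangle$, $A^{n,k}_r=\langle\mu^n_r,(V')^2\wedge k\rangle$ and $B^n_r=\langle\mu^n_r,V''\rangle$, one obtains from \eqref{deff} that
\begin{align*}
F_k(\mu^n)-F(\mu^n)=-\int_s^t\bigg[&\frac{1}{2}\big(\Gamma(A^{n,k}_r,B^n_r)-\Gamma(A^n_r,B^n_r)\big)\frac{1}{n}\sum_{i=1}^n g_i\,\varphi''(Y^{i,n}_r)\\
&-\big(\cG(A^{n,k}_r,B^n_r)-\cG(A^n_r,B^n_r)\big)\frac{1}{n}\sum_{i=1}^n g_i\,V'(Y^{i,n}_r)\varphi'(Y^{i,n}_r)\bigg]\,dr.
\end{align*}
The central quantity is $D^{n,k}_r:=A^n_r-A^{n,k}_r=\langle\mu^n_r,[(V')^2-k]^+\rangle\geq 0$. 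Since $[x^2-k]^+\leq x^4/k$, exchangeability and \eqref{espvcar} give $\E[D^{n,k}_r]\leq\frac{1}{k}\E[(V'(Y^{1,n}_r))^4]\leq C(r)/k$, where $C(r):=\sup_{n\geq1}\E[(V'(Y^{1,n}_r))^4]$ is locally bounded; moreover $B^n_r\in[\inf V'',\sup V'']$ and $A^{n,k}_r\leq A^n_r$, so that $\sqrt{A^n_r}\wedge\sqrt{A^{n,k}_r}=\sqrt{A^{n,k}_r}$.

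For the $\Gamma$ contribution, the empirical average is bounded by $\|g\|_\infty\|\varphi''\|_\infty$, so it suffices to control $\E|\Gamma(A^{n,k}_r,B^n_r)-\Gamma(A^n_r,B^n_r)|$. The Lipschitz estimate \eqref{lipgam} (with equal second arguments) together with $|\sqrt{A^n_r}-\sqrt{A^{n,k}_r}|\leq\sqrt{D^{n,k}_r}$ bounds this by $C\,\E[D^{n,k}_r+\sqrt{D^{n,k}_r}]\leq C\big(C(r)/k+\sqrt{C(r)/k}\big)$ after Jensen's inequality, which tends to $0$ uniformly in $n$ once integrated over the compact interval $[s,t]$.

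The $\cG$ contribution is the main obstacle, because the Lipschitz bound \eqref{lipg} only controls the weighted quantity $\sqrt{A^{n,k}_r}\,|\cG(A^{n,k}_r,B^n_r)-\cG(A^n_r,B^n_r)|$, and $\cG$ is genuinely discontinuous at $(0,0)$, so one cannot divide by $\sqrt{A^{n,k}_r}$. The resolution is to absorb this weight using the very factor $V'$ multiplying $\cG$ in the generator. Bounding the empirical average by $\|g\|_\infty\|\varphi'\|_\infty\langle\mu^n_r,|V'|\rangle$ and splitting
$$\langle\mu^n_r,|V'|\rangle=\langle\mu^n_r,|V'|\,1_{\{|V'|\leq\sqrt k\}}\rangle+\langle\mu^n_r,|V'|\,1_{\{|V'|>\sqrt k\}}\rangle,$$
Cauchy--Schwarz together with $(V')^2 1_{\{|V'|\leq\sqrt k\}}\leq(V')^2\wedge k$ gives $\langle\mu^n_r,|V'|\,1_{\{|V'|\leq\sqrt k\}}\rangle\leq\sqrt{A^{n,k}_r}$ for the bulk term, so by \eqref{lipg} its product with $|\cG(A^{n,k}_r,B^n_r)-\cG(A^n_r,B^n_r)|$ is again at most $C(D^{n,k}_r+\sqrt{D^{n,k}_r})$. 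For the tail term, $|V'|\,1_{\{|V'|>\sqrt k\}}\leq(V')^2/\sqrt k$ combined with the uniform bound $\cG\leq l^2$ from \eqref{bornggam} yields a contribution $\leq\frac{2l^2}{\sqrt k}\langle\mu^n_r,(V')^2 1_{\{|V'|>\sqrt k\}}\rangle$, whose expectation is $\leq\frac{2l^2}{k^{3/2}}\E[(V'(Y^{1,n}_r))^4]\leq 2l^2 C(r)/k^{3/2}$.

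Collecting the three estimates, integrating over $r\in[s,t]$ (where $C(r)$ is bounded) and taking the supremum over $n$ shows $\sup_{n\geq1}\E|F_k(\mu^n)-F(\mu^n)|\leq C_{s,t}\big(1/k+1/\sqrt k\big)\to 0$ as $k\to\infty$, which is the claim. The only delicate point is the $\cG$ (drift) term: the $\sqrt{A^{n,k}_r}$ weight in \eqref{lipg} and the discontinuity of $\cG$ at $(0,0)$ are both neutralized by the above splitting, the bulk being absorbed into the Lipschitz estimate and the tail being handled by the uniform boundedness of $\cG$ and the $L^4$ control \eqref{espvcar}.
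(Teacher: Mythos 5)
Your proof is correct, and while it follows the same overall skeleton as the paper's (bound the coefficient differences via the Lipschitz estimates \eqref{lipgam} and \eqref{lipg}, then show the truncation error $\langle \mu^n_r,((V')^2-k)^+\rangle$ is small uniformly in $n$), you resolve the two key technical points differently. For the $\sqrt{a}$-weight in \eqref{lipg} and the discontinuity of $\cG$ at $(0,0)$, the paper exploits the compact support of $\varphi$: since $V'$ is bounded on the support of $\varphi'$, for $k$ large one has pointwise $|V'\varphi'|\leq \|\varphi'\|_\infty\sqrt{(V')^2\wedge k}$, whence $\langle \mu^n_r,|V'\varphi'|\rangle\leq \|\varphi'\|_\infty\sqrt{\langle \mu^n_r,(V')^2\wedge k\rangle}$ directly, and the weighted Lipschitz bound applies at once. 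You instead split $\langle\mu^n_r,|V'|\rangle$ into the bulk $\{|V'|\leq\sqrt{k}\}$ (absorbed by Cauchy--Schwarz into the $\sqrt{A^{n,k}_r}$ weight of \eqref{lipg}) and the tail $\{|V'|>\sqrt{k}\}$ (handled by the crude bound $\cG\leq l^2$ from \eqref{bornggam} and the $L^4$ control); this avoids any use of the compact support of $\varphi'$ beyond boundedness and so is marginally more general. For the uniform smallness of the truncation error, the paper proves \eqref{cutoff} by decomposing $V'(Y^{1,n}_r)$ through the initial condition and the increment, then applying Cauchy--Schwarz, Markov and the moment bound \eqref{majmomacc}; your one-line Chebyshev-type bound $((V')^2-k)^+\leq (V')^4/k$ combined with exchangeability and \eqref{espvcar} is simpler, gives the explicit rate $O(k^{-1/2})$ uniform in $n$ and locally uniform in time, and is fully justified since \eqref{espvcar} is part of Proposition \ref{prop:tightness}. (The paper's longer route to \eqref{cutoff} is not wasted there, as \eqref{cutoff} is reused in the proof of Proposition \ref{prop:limiting-acceptance}, but for the present statement your shortcut suffices.) All the individual estimates you use --- the sign $A^{n,k}_r\leq A^n_r$ so that the minimum in \eqref{lipg} is $\sqrt{A^{n,k}_r}$, the inequality $|\sqrt{A^n_r}-\sqrt{A^{n,k}_r}|\leq\sqrt{D^{n,k}_r}$, and Jensen for $\E\sqrt{D^{n,k}_r}$ --- check out.
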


\subsection{Proof of Proposition~\texorpdfstring{\protect\ref{intmart}}{4}}\label{seclimit2}
This section is devoted to the proof of Proposition~\ref{intmart}. As
already pointed out in Section~\ref{secformal}, the
main difficulty is the identification of the drift term.

\begin{pf*}{Proof of Proposition~\ref{intmart}}
One has $dY^{i,n}_t=l \sqrt{n} G^i_{\lceil nt\rceil}1_{{\mathcal
A}_{\lceil nt\rceil}}\,dt$. As a
consequence,
\[
\varphi \bigl(Y^{i,n}_t \bigr)-\varphi
\bigl(Y^{i,n}_s \bigr)=\int_s^tl
\sqrt{n}\varphi ' \bigl(Y^{i,n}_r
\bigr)G^i_{\lceil
nr\rceil}1_{{\mathcal A}_{\lceil nr\rceil}}\,dr.
\]
Using the Taylor
expansion,
\begin{eqnarray*}
\varphi' \bigl(Y^{i,n}_r \bigr)&=&
\varphi' \bigl(X^{i,n}_{\lfloor
nr\rfloor} \bigr)+
\varphi'' \bigl(X^{i,n}_{\lfloor nr\rfloor}
\bigr) \bigl(nr-\lfloor nr\rfloor \bigr)\frac{l}{\sqrt{n}}G^i_{\lceil nr\rceil}1_{{\mathcal
A}_{\lceil nr\rceil}}
\\
&&{} +\varphi^{(3)} \bigl(\chi^{i,n}_{r} \bigr)
\bigl(nr-\lfloor nr\rfloor \bigr)^2\frac{l^2}{{2n}}
\bigl(G^i_{\lceil nr\rceil} \bigr)^21_{{\mathcal
A}_{\lceil nr\rceil}},
\end{eqnarray*}
with $\chi^{i,n}_{r}\in[X^{i,n}_{\lfloor nr\rfloor},Y^{i,n}_r]$, one
deduces that
\begin{eqnarray*}
&&\varphi \bigl(Y^{i,n}_t \bigr)-\varphi\bigl(Y^{i,n}_s \bigr)
\\
&&\quad{} -\int_s^t
\biggl( l\sqrt{n} \varphi' \bigl(X^{i,n}_{\lfloor nr\rfloor}
\bigr)G^i_{\lceil
nr\rceil}1_{{\mathcal A}_{\lceil
nr\rceil}}+ \frac{l^2}{2}
\varphi'' \bigl(X^{i,n}_{\lfloor
nr\rfloor}
\bigr) \bigl(G^i_{\lceil nr\rceil} \bigr)^21_{{\mathcal A}_{\lceil
nr\rceil}}
\biggr) \,dr
\\
&&\qquad =\frac{l^3}{2\sqrt{n}}\int_s^t
\varphi^{(3)} \bigl(\chi ^{i,n}_{r} \bigr) \bigl(nr-
\lfloor nr\rfloor \bigr)^2 \bigl(G^i_{\lceil nr\rceil}
\bigr)^31_{{\mathcal A}_{\lceil
nr\rceil}}\,dr
\\
&&\quad\qquad{}+\frac{l^2(ns-\lfloor ns\rfloor)(\lceil ns\rceil
-ns)}{2n}\varphi'' \bigl(X^{i,n}_{\lfloor ns\rfloor}
\bigr) \bigl(G^i_{\lceil
ns\rceil} \bigr)^21_{{\mathcal A}_{\lceil ns\rceil}}
\\
&&\quad\qquad{} -\frac{l^2(nt-\lfloor nt\rfloor)(\lceil nt\rceil
-nt)}{2n}\varphi''
\bigl(X^{i,n}_{\lfloor nt\rfloor} \bigr) \bigl(G^i_{\lceil nt\rceil
}
\bigr)^21_{{\mathcal A}_{\lceil nt\rceil}}.
\end{eqnarray*}
By the boundedness of $\varphi''$ and $\varphi^{(3)}$, one easily
concludes that
%
\begin{eqnarray}\label{decompin}
&& \E \biggl\llvert \varphi \bigl(Y^{i,n}_t
\bigr)- \varphi \bigl(Y^{i,n}_s \bigr)
-\int _s^tl \sqrt {n}\varphi'
\bigl(X^{i,n}_{\lfloor nr\rfloor} \bigr)G^i_{\lceil nr\rceil
}1_{{\mathcal A}_{\lceil nr\rceil}}
\nonumber\\[-8pt]\\[-8pt]\nonumber
&&\hspace*{83pt}{} +
\frac{l^2}{2}\varphi '' \bigl(X^{i,n}_{\lfloor nr\rfloor}
\bigr) \bigl(G^i_{\lceil nr\rceil} \bigr)^21_{{\mathcal
A}_{\lceil nr\rceil}}\,dr
\biggr\rrvert \leq\frac{C}{\sqrt{n}}.
\end{eqnarray}
%
To complete the proof, we now consider the decomposition
%
\begin{eqnarray}\label{decomper}
&& \int_s^t l\sqrt{n}\varphi'
\bigl(X^{i,n}_{\lfloor nr\rfloor} \bigr)G^i_{\lceil
nr\rceil}1_{{\mathcal A}_{\lceil nr\rceil}}+
\frac{l^2}{2}\varphi '' \bigl(X^{i,n}_{\lfloor nr\rfloor}
\bigr) \bigl(G^i_{\lceil nr\rceil} \bigr)^21_{{\mathcal
A}_{\lceil nr\rceil}}\,dr\nonumber
\\
&&\quad{} -
\int_s^tL_{\mu^n_r}\varphi
\bigl(Y^{i,n}_r \bigr)\,dr- \bigl(M^{i,n}_{\lceil nt\rceil}-M^{i,n}_{\lceil ns\rceil
}\bigr)
\\
&&\qquad =T^{i,n}_1+T^{i,n}_2+T^{i,n}_3-T^{i,n}_4+T^{i,n}_5,\nonumber
\end{eqnarray}
where
\begin{eqnarray*}
T^{i,n}_1&=&\int_s^t
\varphi' \bigl(X^{i,n}_{\lfloor
nr\rfloor} \bigr) \bigl(l
\sqrt{n} \E \bigl[G^{i}_{\lceil nr\rceil}1_{{\mathcal
A}_{\lceil nr\rceil}}|{\mathcal
F}^n_{\lfloor
nr\rfloor} \bigr]
\\
&&\hspace*{59pt}{}+\cG \bigl( \bigl\langle
\mu^n_{{\lfloor
nr\rfloor}/{n}}, \bigl(V' \bigr)^2
\bigr\rangle, \bigl\langle\mu^n_{{{\lfloor
nr\rfloor}/{n}}},V''
\bigr\rangle \bigr)V' \bigl(X^{i,n}_{\lfloor
nr\rfloor} \bigr)
\bigr)\,dr,
\\
T^{i,n}_2&=&\frac{1}{2}\int_s^t
\varphi'' \bigl(X^{i,n}_{\lfloor
nr\rfloor}
\bigr) \bigl(l^2\E \bigl[ \bigl(G^{i}_{\lceil nr\rceil}
\bigr)^21_{{\mathcal
A}_{\lceil nr\rceil}}|{\mathcal F}^n_{\lfloor
nr\rfloor}
 \bigr]
 \\
 &&\hspace*{70pt}{}-\Gamma \bigl( \bigl\langle\mu^n_{{{\lfloor
nr\rfloor}/{n}}},
\bigl(V' \bigr)^2 \bigr\rangle, \bigl\langle
\mu^n_{{{\lfloor
nr\rfloor}/{n}}},V'' \bigr\rangle
\bigr)\bigr)\,dr,
\\
T^{i,n}_3&=&\int_s^tL_{\mu^n_{{\lfloor
nr\rfloor}/{n}}}
\varphi \bigl(Y^{i,n}_{{\lfloor
nr\rfloor}/{n}} \bigr)-L_{\mu^n_r}\varphi
\bigl(Y^{i,n}_r \bigr)\,dr,
\\
T^{i,n}_4&=& \biggl(\frac{l(\lceil
nt\rceil-nt)}{\sqrt{n}}\varphi'
\bigl(X^{i,n}_{\lfloor
nt\rfloor} \bigr) \bigl(G^i_{\lceil nt\rceil}1_{{\mathcal A}_{\lceil
nt\rceil}}-
\E \bigl[G^{i}_{\lceil nt\rceil}1_{{\mathcal A}_{\lceil
nt\rceil}}|{\mathcal
F}^n_{\lfloor nt\rfloor} \bigr] \bigr)
\\
&&\hspace*{6pt}{}+\frac{l^2(\lceil
nt\rceil-nt)}{2n}\varphi''
\bigl(X^{i,n}_{\lfloor nt\rfloor} \bigr) \bigl( \bigl(G^i_{\lceil
nt\rceil}
\bigr)^21_{{\mathcal A}_{\lceil nt\rceil}}-\E \bigl[ \bigl(G^{i}_{\lceil
nt\rceil}
\bigr)^21_{{\mathcal A}_{\lceil nt\rceil}}|{\mathcal F}^n_{\lfloor nt\rfloor}
\bigr] \bigr)\biggr)
\end{eqnarray*}
and
\begin{eqnarray*}
T^{i,n}_5 &=& \biggl(\frac{l(\lceil
ns\rceil-ns)}{\sqrt{n}}\varphi'
\bigl(X^{i,n}_{\lfloor
ns\rfloor} \bigr) \bigl(G^i_{\lceil ns\rceil}1_{{\mathcal A}_{\lceil
ns\rceil}}-
\E \bigl[G^{i}_{\lceil ns\rceil}1_{{\mathcal A}_{\lceil
ns\rceil}}|{\mathcal
F}^n_{\lfloor
ns\rfloor} \bigr] \bigr)
\\
&&\hspace*{6pt}{}+\frac{l^2(\lceil
ns\rceil-ns)}{2n}\varphi''
\bigl(X^{i,n}_{\lfloor ns\rfloor} \bigr) \bigl( \bigl(G^i_{\lceil
ns\rceil}
\bigr)^21_{{\mathcal A}_{\lceil ns\rceil}}-\E \bigl[ \bigl(G^{i}_{\lceil
ns\rceil}
\bigr)^21_{{\mathcal A}_{\lceil ns\rceil}}|{\mathcal F}^n_{\lfloor ns\rfloor}
\bigr] \bigr)\biggr).
\end{eqnarray*}
The boundedness of $\varphi'$ and $\varphi''$ implies that
%
\begin{equation}
\E \bigl(\bigl\llvert T^{i,n}_4\bigr\rrvert +\bigl\llvert
T^{i,n}_5\bigr\rrvert \bigr)\leq\frac{C}{\sqrt n}\label{t4t5}.
\end{equation}
By (\ref{restrev2}), H\"{o}lder's inequality and the equality%
\begin{equation}
\E \biggl[\frac{(V'(Y^{i,n}_{\lfloor nr\rfloor/n}))^2}{\langle\mu
^n_{\lfloor nr\rfloor/n},(V')^2\rangle} \biggr]=1\label{consexch}
\end{equation}
deduced from exchangeability, one obtains
%
\begin{eqnarray}\label{t1}
\E\bigl\llvert T^{i,n}_1\bigr\rrvert &\leq& C\int
_s^t \frac{1+\E\llvert V'(Y^{i,n}_{\lfloor
nr\rfloor/n})\rrvert }{\sqrt{n}}+\frac{1}{n^{1/4}}\E
\biggl\llvert \frac
{V'(Y^{i,n}_{\lfloor nr\rfloor/n})}{\langle\mu^n_{\lfloor nr\rfloor
/n},(V')^2\rangle^{1/4}} \biggr\rrvert \nonumber
\\
&&\hspace*{23pt}{} +\frac{1}{n^{1/4}}\E \biggl(
\frac{\llvert V'(Y^{i,n}_{\lfloor nr\rfloor/n})\rrvert ^{3/2}}{\langle\mu^n_{\lfloor
nr\rfloor/n},(V')^2\rangle^{1/2}} \biggr)\,dr
\nonumber\\[-8pt]\\[-8pt]\nonumber
&\leq& C\int_s^t\frac{1+\E\llvert V'(Y^{i,n}_{\lfloor nr\rfloor
/n})\rrvert }{\sqrt{n}}+
\frac{\E^{3/4}(\llvert V'(Y^{i,n}_{\lfloor nr\rfloor
/n})\rrvert ^{2/3})}{n^{1/4}}
\\
&&\hspace*{23pt}{} +\frac{\E^{1/2}\llvert V'(Y^{i,n}_{\lfloor nr\rfloor/n})\rrvert }{n^{1/4}}\,dr.\nonumber
\end{eqnarray}
Concerning~$T^{i,n}_2$, by Cauchy--Schwarz inequality and (\ref
{reste}), one easily checks that
\begin{eqnarray*}
&& \bigl\llvert \E \bigl[ \bigl(G^{i}_{\lceil nr\rceil}
\bigr)^21_{{\mathcal A}_{\lceil
nr\rceil}}| {\mathcal F}^n_{\lfloor
nr\rfloor}
\bigr]
\\
&&\hspace*{3pt}{}-\E \bigl[ \bigl(G^{i}_{\lceil
nr\rceil} \bigr)^2
\bigl(e^{-\sum_{l=1}^n(V'(X^{l,n}_{\lfloor
nr\rfloor}) ({l}/{\sqrt{n}})G^{l}_{\lceil
nr\rceil}+({l^2}/(2n))V''(X^{l,n}_{\lfloor
nr\rfloor}))}\wedge1 \bigr)| {\mathcal F}^n_{\lfloor
nr\rfloor}
\bigr] \bigr\rrvert
\\
&&\qquad \leq\frac{C}{\sqrt{n}}.
\end{eqnarray*}
Moreover, by (\ref{calesp3}) and (\ref{calesp4}),
%
\begin{eqnarray}\label{espcondprod}
&& \E \bigl[G^{i}_{\lceil nr\rceil}G^{j}_{\lceil
nr\rceil}
\bigl(e^{-\sum_{l=1}^n(V'(X^{l,n}_{\lfloor
nr\rfloor}) ({l}/{\sqrt{n}})G^{l}_{\lceil
nr\rceil}+({l^2}/(2n))V''(X^{l,n}_{\lfloor
nr\rfloor}))}\wedge1 \bigr)|{\mathcal F}^n_{\lfloor
nr\rfloor}
\bigr]\nonumber
\\
&&\qquad =\frac{1_{\{i=j\}}}{l^2}\Gamma \bigl( \bigl\langle \mu^n_{{\lfloor nr\rfloor}/{n}},
\bigl(V' \bigr)^2 \bigr\rangle, \bigl\langle
\mu^n_{{\lfloor nr\rfloor}/{n}},V'' \bigr\rangle
\bigr)
\nonumber\\[-8pt]\\[-8pt]
&&\quad\qquad{}+\frac{V'(X^{i,n}_{\lfloor nr\rfloor})V'(X^{j,n}_{\lfloor nr\rfloor
})}{n}
\biggl({\mathcal G} \bigl( \bigl\langle
\mu^n_{{\lfloor nr\rfloor
}/{n}}, \bigl(V' \bigr)^2
\bigr\rangle, \bigl\langle \mu^n_{{\lfloor nr\rfloor
}/{n}},V''
\bigr\rangle \bigr)\nonumber
\\
&&\hspace*{142pt}{}-\frac{l^2e^{-((({l}/{2})\langle\mu
^n_{\lfloor nr\rfloor/n},V''\rangle)^2)/(2\langle\mu^n_{\lfloor
nr\rfloor/n},(V')^2\rangle)}}{\sqrt{2\pi l^2\langle\mu^n_{\lfloor
nr\rfloor/n},(V')^2\rangle}} \biggr).\hspace*{-20pt}\nonumber
\end{eqnarray}
(We will need this expression for $i \neq j$ below.) With the
boundedness of $\cG$ and~(\ref{consexch}), this implies that
%
\begin{equation}\label{t2}
\qquad \E\bigl\llvert T^{i,n}_2\bigr\rrvert 
\leq\frac{C}{\sqrt{n}}+\frac{C}{n}\int_s^t
\E \bigl[ \bigl(V' \bigl(Y^{i,n}_{{\lfloor nr\rfloor}/{n}} \bigr)
\bigr)^2 \bigr]+\E^{1/2} \bigl[ \bigl(V'
\bigl(Y^{i,n}_{{\lfloor
nr\rfloor}/{n}} \bigr) \bigr)^2
\bigr]\,dr.
\end{equation}

To deal with $T^{i,n}_3$, one remarks that by exchangeability,
boundedness of $\cG$, $\varphi'$ and $(V'\varphi')'$, then by (\ref{lipg})
\begin{eqnarray*}
&& \E \bigl\llvert \cG \bigl( \bigl\langle\mu^n_r,
\bigl(V' \bigr)^2 \bigr\rangle, \bigl\langle\mu
^n_r,V'' \bigr\rangle
\bigr)V'\varphi' \bigl(Y^{i,n}_r
\bigr)
\\
&&\hspace*{8pt}{}-\cG \bigl( \bigl\langle\mu^n_{\lfloor
nr\rfloor/n},
\bigl(V' \bigr)^2 \bigr\rangle, \bigl\langle
\mu^n_{\lfloor nr\rfloor
/n},V'' \bigr\rangle
\bigr)V'\varphi' \bigl(Y^{i,n}_{\lfloor nr\rfloor/n}
\bigr) \bigr\rrvert
\\
&&\qquad \leq\E \bigl(\bigl\llvert \cG \bigl( \bigl\langle\mu^n_r,
\bigl(V' \bigr)^2 \bigr\rangle, \bigl\langle
\mu^n_r,V'' \bigr\rangle
\bigr)
\\
&&\hspace*{45pt}{}-\cG \bigl( \bigl\langle\mu^n_{\lfloor
nr\rfloor/n},
\bigl(V' \bigr)^2 \bigr\rangle, \bigl\langle
\mu^n_{\lfloor
nr\rfloor/n},V'' \bigr\rangle
\bigr)\bigr\rrvert
\bigl( \bigl\langle\mu^n_r,\bigl\llvert
V' \varphi'\bigr\rrvert \bigr\rangle\wedge \bigl
\langle\mu^n_{\lfloor nr\rfloor/n},\bigl\llvert V'
\varphi'\bigr\rrvert \bigr\rangle \bigr) \bigr)
\\
&&\quad\qquad{} +C\E\bigl\llvert
Y^{i,n}_r-Y^{i,n}_{\lfloor nr\rfloor/n}\bigr\rrvert
\\
&&\qquad \leq C\E \bigl(\bigl\llvert \bigl\langle\mu^n_r-
\mu^n_{\lfloor
nr\rfloor/n},V'' \bigr\rangle
\bigr\rrvert +\bigl\llvert \bigl\langle\mu^n_r-
\mu^n_{\lfloor
nr\rfloor/n}, \bigl(V' \bigr)^2
\bigr\rangle\bigr\rrvert
\\
&&\hspace*{51pt}{} +\bigl\llvert \bigl\langle\mu^n_r-
\mu^n_{\lfloor nr\rfloor/n}, \bigl(V' \bigr)^2
\bigr\rangle \bigr\rrvert ^{1/2}+\bigl\llvert Y^{i,n}_r-Y^{i,n}_{\lfloor nr\rfloor/n}
\bigr\rrvert \bigr).
\end{eqnarray*}
By exchangeability, $\E\llvert \langle\mu^n_r-\mu^n_{\lfloor nr\rfloor
/n},V''\rangle\rrvert \leq\llVert  V^{(3)}\rrVert  _\infty\E\llvert Y^{i,n}_r-Y^{i,n}_{\lfloor
nr\rfloor/n}\rrvert $. Moreover, $\llvert Y^{i,n}_r-Y^{i,n}_{\lfloor nr\rfloor
/n}\rrvert \leq\frac{l}{\sqrt{n}}\llvert G^i_{\lceil nr\rceil}\rrvert $.
Dealing in the same way with the diffusion term by boundedness of
$\Gamma$ and $\varphi^{(3)}$ and (\ref{lipgam}), one deduces that
%
\begin{eqnarray}\label{t3}
\E\bigl\llvert T^{i,n}_3\bigr\rrvert &\leq&
\frac{C}{\sqrt{n}}+\int_s^t\E\bigl\llvert
\bigl\langle\mu ^n_r- \mu^n_{\lfloor nr\rfloor/n},
\bigl(V' \bigr)^2 \bigr\rangle\bigr\rrvert
\nonumber\\[-8pt]\\[-8pt]\nonumber
&&\hspace*{43pt}{} +
\E^{1/2}\bigl\llvert \bigl\langle \mu^n_r-
\mu^n_{\lfloor nr\rfloor/n}, \bigl(V' \bigr)^2
\bigr\rangle\bigr\rrvert \,dr.
\end{eqnarray}
One has
%
\begin{eqnarray} \label{majoecdisc}
&& \E\bigl\llvert \bigl\langle\mu^n_r-
\mu^n_{\lfloor
nr\rfloor/n}, \bigl(V' \bigr)^2
\bigr\rangle\bigr\rrvert
\nonumber
\\
&&\qquad \leq \sqrt{2}\bigl\llVert V''\bigr\rrVert
_\infty\E ^{1/2} \bigl[ \bigl(V'
\bigl(Y^{i,n}_r \bigr) \bigr)^2+
\bigl(V' \bigl(Y^{i,n}_{\lfloor
nr\rfloor/n} \bigr)
\bigr)^2 \bigr]
\nonumber\\[-8pt]\\[-8pt]\nonumber
&&\quad\qquad{}\times \E^{1/2} \bigl[ \bigl(Y^{i,n}_r-Y^{i,n}_{\lfloor
nr\rfloor/n}
\bigr)^2 \bigr]
\\
&&\qquad \leq\frac{C}{\sqrt{n}}\E ^{1/2} \bigl[ \bigl(V'
\bigl(Y^{i,n}_r \bigr) \bigr)^2+
\bigl(V' \bigl(Y^{i,n}_{\lfloor nr\rfloor/n} \bigr)
\bigr)^2 \bigr].\nonumber
\end{eqnarray}
Plugging this inequality in (\ref{t3}) and inserting the resulting
inequality together with (\ref{t4t5}), (\ref{t1}) and (\ref{t2})
into (\ref{decomper}), one concludes with (\ref{decompin}) and the
local boundedness of $r\mapsto\sup_{n\geq1}\sup_{1\leq i\leq n}\E
[(V'(Y^{i,n}_r))^2]$ deduced from (\ref{espvcar}) and exchangeability.
\end{pf*}

This completes the proof of Proposition~\ref{intmart}.

\subsection{Proofs of Propositions \texorpdfstring{\protect\ref{contmart}}{5} and \texorpdfstring{\protect\ref{propffk}}{6}}\label{seclimit3}
Finally, it remains to prove Propositions~\ref{contmart} and~\ref{propffk}.

\begin{pf*}{Proof of Proposition~\ref{contmart}}
Since for $1\leq i\leq n$, $(M^{i,n}_k)$ is a ${\mathcal
F}^n_k$-martingale and $g(Y^{i,n}_{s_1},\ldots,Y^{i,n}_{s_p})$ is
${\mathcal F}^n_{\lceil ns\rceil}$-measurable, one has
%
\begin{eqnarray}\label{sumespcond}
&& \E \Biggl[ \Biggl(\frac{1}{n}\sum_{i=1}^n
\bigl(M^{i,n}_{\lceil nt\rceil
}-M^{i,n}_{\lceil ns\rceil} \bigr)g
\bigl(Y^{i,n}_{s_1},\ldots,Y^{i,n}_{s_p}
\bigr) \Biggr)^2 \Biggr]
\nonumber
\\
&&\qquad =\frac{1}{n^2}\sum_{i,j=1}^n \sum
_{k=\lceil ns\rceil}^{\lceil
nt\rceil-1}\E \bigl[\E \bigl[
\bigl(M^{i,n}_{k+1}-M^{i,n}_k \bigr)
\bigl(M^{j,n}_{k+1}-M^{j,n}_k \bigr)|{
\mathcal F}^n_k \bigr]
\\
&&\hspace*{113pt}{}\times g \bigl(Y^{i,n}_{s_1},
\ldots,Y^{i,n}_{s_p} \bigr)g \bigl(Y^{j,n}_{s_1},
\ldots,Y^{j,n}_{s_p} \bigr) \bigr].\nonumber
\end{eqnarray}
Using the boundedness of $\varphi'$ and $\varphi''$, then (\ref
{reste}), (\ref{espcondprod}) and the equality
\begin{eqnarray*}
&& \E \bigl[G^{i}_{k+1} \bigl(e^{-\sum_{l=1}^n(V'(X^{l,n}_{k})
({l}/{\sqrt{n}})G^{l}_{k+1}+({l^2}/(2n))V''(X^{l,n}_{k}))}\wedge 1
\bigr)|{\mathcal F}^n_k \bigr]
\\
&&\qquad =-\frac{ V'(X^{i,n}_{k})}{l\sqrt
{n}}\cG
\bigl( \bigl\langle\mu^n_{k/n}, \bigl(V'
\bigr)^2 \bigr\rangle, \bigl\langle\mu ^n_{k/n},V''
\bigr\rangle \bigr)
\end{eqnarray*}
deduced from (\ref{calesp1}), one obtains
\begin{eqnarray*}
&& \bigl\llvert \E \bigl[ \bigl(M^{i,n}_{k+1}-M^{i,n}_k
\bigr) \bigl(M^{j,n}_{k+1}-M^{j,n}_k
\bigr)|{\mathcal F}^n_k \bigr] \bigr\rrvert
\\
&&\qquad \leq\frac{C}{n} \bigl\llvert \E \bigl[G^{i}_{k+1}G^{j}_{k+1}1_{{\mathcal
A}_{k+1}}|{
\mathcal F}^n_k \bigr]-\E \bigl[G^{i}_{k+1}1_{{\mathcal
A}_{k+1}}|{
\mathcal F}^n_k \bigr]\E \bigl[G^{j}_{k+1}1_{{\mathcal
A}_{k+1}}|{
\mathcal F}^n_k \bigr] \bigr\rrvert
\\
&&\qquad \leq\frac{C}{n^{3/2}}
\\
&&\quad\qquad{} +\frac{C}{n} \bigg|\E \bigl[G^{i}_{k+1}G^{j}_{k+1}
\bigl(e^{-\sum_{l=1}^n(V'(X^{l,n}_{k})
({l}/{\sqrt{n}})G^{l}_{k+1}+({l^2}/(2n))V''(X^{l,n}_{k}))}\wedge 1 \bigr)|{\mathcal F}^n_k
\bigr]
\\
&&\hspace*{24pt}\quad\qquad{}-\E \bigl[G^{i}_{k+1} \bigl(e^{-\sum_{l=1}^n(V'(X^{l,n}_{k}) ({l}/{\sqrt
{n}})G^{l}_{k+1}+({l^2}/(2n))V''(X^{l,n}_{k}))} \wedge1
\bigr)|{\mathcal F}^n_k \bigr]
\\
&&\hspace*{38pt}\quad\qquad{}\times\E \bigl[G^{j}_{k+1} \bigl(e^{-\sum
_{l=1}^n(V'(X^{l,n}_{k})({l}/{\sqrt{n}})G^{l}_{k+1}+
({l^2}/(2n))V''(X^{l,n}_{k}))}
\wedge1 \bigr)|{\mathcal F}^n_k \bigr] \bigg|
\\
&&\qquad \leq C \biggl(\frac{1}{n^{3/2}}+\frac{1_{\{i=j\}}}{n}+\frac
{\llvert V'(X^{i,n}_k)V'(X^{j,n}_k)\rrvert }{n^2}+
\frac
{\llvert V'(X^{i,n}_k)V'(X^{j,n}_k)\rrvert }{n^2\sqrt{\langle\mu
^n_{k/n},(V')^2\rangle}} \biggr).
\end{eqnarray*}
Plugging this estimate into (\ref{sumespcond}) and using the
boundedness of $g$ and (\ref{consexch}), one concludes that
\begin{eqnarray*}
&& \E \Biggl[ \Biggl(\frac{1}{n}\sum_{i=1}^n
\bigl(M^{i,n}_{\lceil nt\rceil
}-M^{i,n}_{\lceil ns\rceil} \bigr)g
\bigl(Y^{i,n}_{s_1},\ldots,Y^{i,n}_{s_p}
\bigr) \Biggr)^2 \Biggr]
\nonumber
\\
&&\qquad \leq C \Biggl(\frac{\lceil nt\rceil-\lceil ns\rceil}{n^{3/2}}+\frac
{1}{n^2}\sum
_{k=\lceil ns\rceil}^{\lceil nt\rceil-1} \Bigl(\E \bigl[ \bigl(V'
\bigl(Y^{i,n}_{k/n} \bigr) \bigr)^2 \bigr]+\sqrt
{\E \bigl[ \bigl(V' \bigl(Y^{i,n}_{k/n}
\bigr) \bigr)^2 \bigr]} \Bigr) \Biggr).
\end{eqnarray*}
One concludes with the local boundedness of $r\mapsto\sup_{n\geq
1}\sup_{1\leq i\leq n}\E[(V'(Y^{i,n}_r))^2]$ deduced from (\ref
{espvcar}) and exchangeability.
\end{pf*}

\begin{pf*}{Proof of Proposition~\ref{propffk}}
Since the function $\varphi$ is compactly supported and $V'$ is
continuous, one may suppose that $k$ is large enough so that $\forall
x\in\R$, $\llvert V'\varphi'(x)\rrvert \leq\llVert  \varphi'\rrVert  _\infty\sqrt
{(V'(x))^2\wedge k}$ and, therefore,
\[
\bigl\langle\mu^n_r,\bigl| V'\varphi'\bigr| \bigr\rangle\leq \bigl\llVert  \varphi'\bigr\rrVert  _\infty\sqrt{\bigl\langle\mu
^n_r,\bigl(V'\bigr)^2\wedge k\bigr\rangle}.
\]
By boundedness of $g$ and $\varphi''$,
then using (\ref{lipgam}) and (\ref{lipg}), one deduces
%
\begin{eqnarray}\label{difffk}
&& \E\bigl\llvert F_k \bigl(\mu^n \bigr)-F \bigl(
\mu^n \bigr)\bigr\rrvert\nonumber
\\
&&\qquad
\leq C\int_s^t
\E \Bigl[\bigl\llvert \Gamma \bigl( \bigl\langle \mu^n_r,
\bigl(V' \bigr)^2\wedge k \bigr\rangle, \bigl\langle
\mu^n_r,V'' \bigr\rangle
\bigr)-\Gamma \bigl( \bigl\langle \mu^n_r,
\bigl(V' \bigr)^2 \bigr\rangle, \bigl\langle
\mu^n_r,V'' \bigr\rangle
\bigr)\bigr\rrvert
\nonumber
\\
&&\hspace*{35pt}\quad\qquad{}+\bigl\llvert \cG \bigl( \bigl\langle\mu^n_r,
\bigl(V' \bigr)^2\wedge k \bigr\rangle, \bigl\langle\mu
^n_r,V'' \bigr\rangle
\bigr)-\cG \bigl( \bigl\langle\mu^n_r,
\bigl(V' \bigr)^2 \bigr\rangle, \bigl\langle\mu
^n_r,V'' \bigr\rangle
\bigr)\bigr\rrvert
\\
&&\hspace*{239pt}{}\times  \sqrt{ \bigl\langle\mu^n_r,
\bigl(V' \bigr)^2\wedge k \bigr\rangle} \Bigr]\,dr\hspace*{-8pt}\nonumber
\\
&&\qquad \leq C\int_s^t\E \Bigl[\sqrt{
\bigl\langle\mu^n_r, \bigl( \bigl(V'
\bigr)^2-k \bigr)^+ \bigr\rangle }+ \bigl\langle\mu^n_r,
\bigl( \bigl(V' \bigr)^2-k \bigr)^+ \bigr\rangle
\Bigr]\,dr.\nonumber
\end{eqnarray}
Since $\llvert V'(Y^{1,n}_r)\rrvert \leq\llvert V'(X^{1,n}_0)\rrvert +\llVert  V''\rrVert  _\infty
\llvert Y^{1,n}_r-Y^{1,n}_0\rrvert $, using the Cauchy--Schwarz and the Markov
inequalities, one obtains that
\begin{eqnarray*}
&&\E \bigl[ \bigl\langle\mu^n_r, \bigl(
\bigl(V' \bigr)^2-k \bigr)^+ \bigr\rangle \bigr]
\\
&&\qquad \leq\E \bigl[ \bigl(V' \bigl(Y^{1,n}_r \bigr)
\bigr)^21_{\{\llvert V'(Y^{1,n}_r)\rrvert \geq\sqrt{k}\}} \bigr]
\\
&&\qquad \leq 2\E \bigl[ \bigl( \bigl(V' \bigl(X^{1,n}_0
\bigr) \bigr)^2+\bigl\llVert V''\bigr
\rrVert ^2_\infty \bigl\llvert Y^{1,n}_r-Y^{1,n}_0
\bigr\rrvert ^2 \bigr)
\\
&&\hspace*{47pt}{}\times  (1_{\{\llvert V'(X^{1,n}_0)\rrvert \geq
({\sqrt{k}}/{2})\}}+1_{\{\llvert Y^{1,n}_r-Y^{1,n}_0\rrvert \geq ({\sqrt{k}}/({2\llVert
V''\rrVert  _\infty}))\}} ) \bigr]
\\
&&\qquad \leq\frac{C}{k} \bigl(\E \bigl[ \bigl(V'
\bigl(X^{1,n}_0 \bigr) \bigr)^4 \bigr]
\\
&&\hspace*{45pt}{} +\E
^{1/2} \bigl[\bigl\llvert Y^{1,n}_r-Y^{1,n}_0
\bigr\rrvert ^4 \bigr]\E^{1/2} \bigl[ \bigl(V'
\bigl(X^{1,n}_0 \bigr) \bigr)^4 \bigr]
+\E
\bigl[\bigl\llvert Y^{1,n}_r-Y^{1,n}_0
\bigr\rrvert ^4 \bigr] \bigr).
\end{eqnarray*}
Therefore, by (\ref{majmomacc}),%
\begin{equation}
\lim_{k\to\infty}\sup_{n\geq1}\sup
_{r\in[0,t]}\E \bigl[ \bigl\langle\mu ^n_r,
\bigl( \bigl(V' \bigr)^2-k \bigr)^+ \bigr\rangle
\bigr]=0.\label{cutoff}
\end{equation}
One concludes by plugging this result into~(\ref{difffk}).
\end{pf*}

\section{Proof of Proposition~\texorpdfstring{\protect\ref{proplimiting-acceptance}}{1}}\label{secprop}
By (\ref{reste}) and~\cite{roberts-gelman-gilks-97}, Proposition 2.4,
which is also a consequence of~(\ref{calesp3}) for the choice $\alpha
=0$, there is a finite deterministic constant~$C$ not depending on $t$
such that
\[
\biggl\llvert \mathbb{P} \bigl({\mathcal A}_{\lfloor nt\rfloor+1}|{\mathcal
F}^n_{\lfloor nt\rfloor} \bigr)-\frac{1}{l^2}\Gamma \bigl( \bigl
\langle \mu^n_{{\lfloor nt\rfloor}/{n}}, \bigl(V'
\bigr)^2 \bigr\rangle, \bigl\langle\mu^n_{{\lfloor
nt\rfloor}/{n}},V''
\bigr\rangle \bigr) \biggr\rrvert \leq\frac{C}{\sqrt{n}}.
\]
With (\ref{lipgam}), one deduces that
%
\begin{eqnarray}\label{contprobacc}
&& \E \biggl\llvert \mathbb{P} \bigl({\mathcal A}_{\lfloor nt\rfloor+1}|{\mathcal
F}^n_{\lfloor nt\rfloor} \bigr)-\frac{1}{l^2}\Gamma \bigl(\E \bigl[
\bigl(V'(X_t) \bigr)^2 \bigr],\E
\bigl[V''(X_t) \bigr] \bigr) \biggr\rrvert
\nonumber
\\
&&\qquad \leq C \biggl(\frac{1}{\sqrt{n}}+ \bigl(\E +\E^{1/2} \bigr) \bigl
\llvert \bigl\langle\mu^n_{{\lfloor nt\rfloor
}/{n}}, \bigl(V'
\bigr)^2 \bigr\rangle-\E \bigl[ \bigl(V'(X_t)
\bigr)^2 \bigr] \bigr\rrvert
\\
&&\hspace*{127pt}{} +\E \bigl\llvert \bigl\langle\mu
^n_{{\lfloor nt\rfloor}/{n}},V'' \bigr\rangle-\E
\bigl[V''(X_t) \bigr] \bigr\rrvert\biggr).\nonumber
\end{eqnarray}
One has for $k\in{\mathbb N}$,
\begin{eqnarray*}
&& \E \bigl\llvert \bigl\langle\mu^n_{{\lfloor nt\rfloor}/{n}},
\bigl(V' \bigr)^2 \bigr\rangle -\E \bigl[
\bigl(V'(X_t) \bigr)^2 \bigr] \bigr\rrvert
\\
&&\qquad \leq \E \bigl\llvert \bigl\langle\mu^n_{{\lfloor
nt\rfloor}/{n}}-
\mu^n_{t}, \bigl(V' \bigr)^2
\bigr\rangle \bigr\rrvert +\E \bigl\langle \mu ^n_{t},
\bigl( \bigl(V' \bigr)^2-k \bigr)^+ \bigr\rangle
\\
&&\quad\quad{} +\E \bigl\llvert \bigl\langle\mu ^n_{t},
\bigl(V' \bigr)^2\wedge k \bigr\rangle-\E \bigl[
\bigl(V'(X_t) \bigr)^2\wedge k \bigr] \bigr
\rrvert +\E \bigl[ \bigl( \bigl(V' \bigr)^2-k
\bigr)^+(X_t) \bigr].
\end{eqnarray*}
By the end of the proof of Proposition~\ref{intmart} [see in
particular (\ref{majoecdisc})], the first term in the right-hand side
converges to $0$ locally uniformly in $t$ as $n\to\infty$. By (\ref
{cutoff}) and Theorem~\ref{chaos}, the sum of the second and last
terms in the right-hand side converges to $0$ as $k\to\infty$
uniformly in $n$ and locally uniformly in $t$. Last, for fixed $k$, the
third term converges to $0$ as $n\to\infty$ locally uniformly in $t$
by Theorem~\ref{chaos}. One deduces that $\E\llvert \langle\mu
^n_{{\lfloor nt\rfloor}/{n}},(V')^2\rangle-\E
[(V'(X_t))^2]\rrvert $ converges\vspace*{1pt} to $0$ as $n\to\infty$ locally
uniformly in $t$.
Dealing with the other expectation in the right-hand side of (\ref
{contprobacc}) in a similar but easier way (since $V''$ is bounded),
one completes the proof.

\begin{appendix}\label{secapp}
\section*{Appendix: Proofs of technical results}
\setcounter{equation}{0}

In this section, we first give a proof of Lemma~\ref{propggam} which
gives basic properties of the functions $\Gamma$ and $\cG$. Then we
give some explicit formulas for some expectations involving Gaussian
random variables.

\begin{pf*}{Proof of Lemma~\ref{propggam}}
The functions $\cG$ and $\Gamma$ are clearly continuous on
$(0,+\infty)\times\R$. We recall the usual tail estimate for the
Normal law: $\forall x>0$,
%
\begin{equation}
\Phi(-x)=\int_x^{+\infty}e^{-{y^2}/{2}}
\frac{dy}{\sqrt{2\pi
}} \leq\int_x^{+\infty}
\frac{y}{x}e^{-y^2/2}\frac
{dy}{\sqrt{2\pi}}=\frac{e^{-x^2/2}}{x\sqrt{2\pi}}.\label{tailnorm}
\end{equation}
One deduces that for $a>b^+$,
%
\begin{eqnarray}\label{majog2}
\Phi \biggl(l \biggl(\frac{b}{2\sqrt{a}}-\sqrt{a} \biggr) \biggr)
&\leq& \frac{2}{l\sqrt{2\pi a}}e^{-(l^2(b-2a)^2)/(8a)}\quad\mbox{and}
\nonumber\\[-8pt]\\[-8pt]\nonumber
{\mathcal G}(a,b)&\leq& \frac{2l}{\sqrt{2\pi a}}e^{-(l^2b^2)/(8a)}.
\end{eqnarray}
Since for $0\leq a\leq b$, $\cG(a,b)\leq l^2\times1\times1$, one
deduces (\ref{majog}). Moreover, (\ref{majog2}) implies that $\cG$
is continuous on $\{(0,+\infty]\times\R\}\cup\{\{0\}\times(-\infty,0)\}$. With the continuity of $(a,b)\mapsto\frac{b}{\sqrt{a}}$ on
$(0,+\infty]\times\R$ under the convention $\frac{b}{\sqrt{\infty
}}=0$, one deduces that $\Gamma$ is continuous on $(0,+\infty]\times
\R$.
For $\beta>0$, $\lim_{a\to0^+,b\to\beta}\Phi (\frac
{b}{2\sqrt{a}}-\sqrt{a} )=1$ and,\vspace*{1pt} therefore, $\lim_{a\to
0^+,b\to\beta}{\cG}(a,b)={\cG}(0,\beta)$, which completes the
proof of the continuity properties of $\cG$.
Since for $(a,b)\in(0,+\infty)\times\R$, $\partial_b \Gamma
(a,b)=-\frac{l^4}{2}e^{({l^2(a-b)})/{2}}\Phi (l (\frac
{b}{2\sqrt{a}}-\sqrt{a} ) )<0$, for\vspace*{1pt} fixed $a\in
(0,+\infty)$, the function $b\mapsto\Gamma(a,b)$ is decreasing. One
easily checks that\vspace*{1pt} for fixed $b<0$, $\lim_{a\to0^+}\Gamma
(a,b)=l^2+0=\Gamma(0,b)$ and\vspace*{1pt} for fixed $b>0$,\break $\lim_{a\to0^+}\Gamma
(a,b)=0+l^2e^{-(l^2b)/2}=\Gamma(0,b)$. With the previous
monotonicity property, one deduces that $\lim_{a\to0^+}\Gamma
(a,0)=l^2=\Gamma(0,0)$. The continuity of $b\mapsto\Gamma(0,b)$ and
Dini's lemma implies that $b\mapsto\Gamma(a,b)$ converges locally
uniformly to $b\mapsto\Gamma(0,b)$ as $a\to0^+$ and that~$\Gamma$
is continuous on $[0,+\infty]\times\R$.
Since $\Gamma$ is positive on $[0,+\infty]\times\R$, one deduces
that (\ref{mingam}) holds.
For $a>0$, by~(\ref{majog2}), $\lim_{b\to-\infty}{\cG}(a,b)=0$.
Since $\lim_{b\to-\infty}\Phi (-\frac{l b}{2\sqrt{a}}
)=1$, one deduces that $\lim_{b\to-\infty}\Gamma(a,b)=l^2$. By
monotonicity of $b\mapsto\Gamma(a,b)$, one deduces that $\forall
(a,b)\in(0,+\infty)\times\R$, $\Gamma(a,b)\leq l^2$. This bound
still holds for $a\in\{0,+\infty\}$ by continuity (or using the
explicit expression of $\Gamma$).
For $(a,b)\in(0,+\infty)\times\R$, one has
\begin{eqnarray*}
\partial_b \Gamma(a,b)&=&-\frac{l^2}{2}\cG(a,b),
\\
\partial_a \Gamma(a,b)&=&\frac{l^2}{2}\cG(a,b)-
\frac{l^3}{2\sqrt
{2\pi a}}e^{-(l^2b^2)/(8a)},
\\
\partial_b \cG(a,b)&=&-\frac{l^2}{2}\cG(a,b)+
\frac{l^3}{2\sqrt{2\pi a}}e^{-(l^2b^2)/(8a)},
\\
\partial_a \cG(a,b)&=&\frac{l^2}{2}\cG(a,b)-
\frac{l^3}{2\sqrt{2\pi
}} \biggl(\frac{1}{\sqrt{a}}+\frac{b}{2a^{3/2}}
\biggr)e^{-(l^2b^2)/(8a)}.
\end{eqnarray*}
The boundedness of $\cG$ then implies (\ref{lipgam}). Concerning
(\ref{lipg}), let us give some details for the inequality
\[
\bigl(\sqrt{a} \wedge\sqrt{a'} \bigr)\bigl\llvert \cG(a,b) - \cG
\bigl(a',b \bigr)\bigr\rrvert \le C \bigl(\bigl\llvert
a'-a\bigr\rrvert +\bigl\llvert \sqrt{a'}-\sqrt{a}
\bigr\rrvert \bigr).
\]
Let us assume that $0 \le a < a'$ and $b\in[\inf V'',\sup V'']$. Then
we have
\begin{eqnarray*}
&& \bigl(\sqrt{a} \wedge\sqrt{a'} \bigr)\bigl\llvert \cG(a,b) - \cG
\bigl(a',b \bigr)\bigr\rrvert
\\
&&\qquad =\sqrt{a} \biggl\llvert \int
_a^{a'} \partial_a \cG(x,b) \,dx
\biggr\rrvert
\\
&&\qquad =\sqrt{a} \biggl\llvert \int_a^{a'}
\frac{l^2}{2}\cG(x,b)-\frac{l^3}{2\sqrt{2\pi}} \biggl(\frac
{1}{\sqrt{x}}+
\frac{b}{2x^{3/2}} \biggr)e^{-(l^2b^2)/(8x)} \,dx \biggr\rrvert
\\
&&\qquad \le C \sqrt{a}\int_a^{a'} \biggl(
\frac{1}{\sqrt{x}}+\frac
{1}{x} \biggr) \,dx \le C \biggl(
\bigl(a'-a \bigr) + \int_a^{a'}
\frac{\sqrt{a}}{x} \,dx \biggr)
\\
&&\qquad \le C \biggl( \bigl(a'-a \bigr) + \int_a^{a'}
\frac{1}{\sqrt{x}} \,dx \biggr) \le C \bigl( \bigl(a'-a \bigr) +
\bigl(\sqrt{a'}-\sqrt{a} \bigr) \bigr),
\end{eqnarray*}
where we used (\ref{majog}) and the boundedness of $(x,b) \in
(0,+\infty] \times\R
\mapsto\break \frac{b}{2\sqrt{x}}e^{-(l^2b^2)/(8x)}$ for the first inequality.
\end{pf*}

%
\begin{lemma}\label{calesp}For $\alpha,\beta,\gamma,\delta\in\R$
and independent normal random variables $G$, $\tilde{G}$ and $\hat
{G}$, one has
%
\begin{eqnarray}
\qquad && \E \bigl(G \bigl(e^{\alpha G+\beta\tilde{G}+\gamma}\wedge 1 \bigr) \bigr)
\nonumber\\[-8pt]\label{calesp1}  \\[-8pt]\nonumber
&&\qquad =\alpha e^{\gamma+((\alpha^2+\beta^2)/2)}
\Phi \biggl(-\frac{\gamma
+\alpha^2+\beta^2}{\sqrt{\alpha^2+\beta^2}} \biggr)
=\frac{\alpha}{l^2}\cG \biggl(\frac{\alpha^2+\beta^2}{l^2},-\frac
{2\gamma}{l^2}
\biggr),\nonumber
\\
&& \bigl\llvert \E \bigl(G \bigl(1-e^{\alpha
G+\beta\tilde{G}+\gamma} \bigr)^+ \bigr) \bigr\rrvert
\leq \biggl(\sqrt{\frac{2}{\pi}}+\sqrt{2\gamma^-} \biggr)\sqrt{
\frac
{\alpha^2}{\alpha^2+\beta^2}},\label{calesp2}
\\
&& \E \bigl(G^2 \bigl(e^{\alpha G+\beta\tilde{G}+\gamma}\wedge 1 \bigr) \bigr)\nonumber
\\
\label{calesp3} &&\qquad = \bigl(1+ \alpha^2 \bigr)e^{\gamma+((\alpha^2+\beta^2)/2)}\Phi \biggl(-\frac{\gamma+\alpha^2+\beta^2}{\sqrt{\alpha
^2+\beta^2}}
\biggr)
\\
&&\quad\qquad{} +\Phi \biggl(\frac{\gamma}{\sqrt{\alpha^2+\beta^2}} \biggr)-\frac
{\alpha^2}{\sqrt{2\pi(\alpha^2+\beta^2)}}e^{-(\gamma^2/(2(\alpha^2+\beta^2))},\nonumber
\\
&&\E \bigl(G\hat{G} \bigl(e^{\alpha
G+\beta\tilde{G}+\delta\hat{G}+\gamma}\wedge 1 \bigr) \bigr)
\nonumber
\\
\label{calesp4} &&\qquad =\alpha\delta \biggl(e^{\gamma+((\alpha^2+\beta^2+\delta^2)/2)}\Phi \biggl(-\frac{\gamma+\alpha^2+\beta^2+\delta^2}{\sqrt{\alpha^2+\beta
^2+\delta^2}} \biggr)
\\
&&\hspace*{137pt}{} -
\frac{e^{-(\gamma^2/({2(\alpha^2+\beta
^2+\delta^2)}))}}{\sqrt{2\pi(\alpha^2+\beta^2+\delta^2)}} \biggr),\nonumber
\\
&& \forall a \in[0,+\infty),\qquad \E \bigl(\cG(a,\alpha G+\beta) \bigr)=\cG \biggl(a+
\frac{l^2\alpha^2}{4},\beta \biggr).\label{calesp5}
\end{eqnarray}
\end{lemma}

\begin{pf}In this proof, the identity $\E(f(G)e^{\alpha
G - \alpha^2/2})=\E(f(\alpha+G))$ is repeatedly used. Let us start
with~(\ref{calesp1}).
By the symmetry of the normal law, $\alpha\mapsto\E (G
(e^{\alpha G+\beta\tilde{G}+\gamma}\wedge1 ) )$ is an
odd function and we only need to check~(\ref{calesp1}) for $\alpha>0$.
Conditioning by $\tilde{G}$ for the third equality, we get
\begin{eqnarray*}
&& \E \bigl(G \bigl(e^{\alpha G+\beta\tilde{G}+\gamma}\wedge1 \bigr) \bigr)
\\
&&\qquad =\E \bigl(e^{\gamma+(\alpha^2/2)}e^{\alpha G-(\alpha^2/2)}
e^{\beta\tilde{G}}G1_{\{G\leq-(\gamma+\beta
\tilde{G})/\alpha\}}+G1_{\{G>(\gamma+\beta\tilde{G})/{\alpha
}\}}
\bigr)
\\
&&\qquad = e^{\gamma+(\alpha^2/2)}\E \bigl(e^{\beta
\tilde{G}}(\alpha+G)1_{\{\alpha+G\leq-(\gamma+\beta\tilde
{G})/\alpha\}} \bigr)+
\E (G1_{\{G>(\gamma+\beta\tilde
{G})/{\alpha}\}} )
\\
&&\qquad =\alpha e^{\gamma+((\alpha^2+\beta^2)/2)}\mathbb{P} \biggl(\frac{\alpha
G+\beta(\beta+\tilde{G})}{\sqrt{\alpha^2+\beta^2}}\leq -
\frac{\gamma+\alpha^2}{\sqrt{\alpha^2+\beta^2}} \biggr)
\\
&&\quad\qquad{} -\frac {e^{\gamma+(\alpha^2/2)}}{\sqrt{2\pi}}\E \bigl(e^{\beta
\tilde{G}}e^{-{(\gamma+\alpha^2+\beta\tilde{G})^2}/(2\alpha^2)}
\bigr)
\\
&&\quad\qquad{}
+\frac{1}{\sqrt{2\pi}}\E \bigl(e^{-{(\gamma+\beta\tilde
{G})^2}/({2\alpha^2})} \bigr).
\end{eqnarray*}
We deduce~(\ref{calesp1}) by remarking that the two last terms
compensate each other since
\[
\gamma+\frac{\alpha^2}{2}+\beta\tilde{G}-\frac{(\gamma+\alpha
^2+\beta\tilde{G})^2}{2\alpha^2}=-
\frac{(\gamma+\beta\tilde
{G})^2}{2\alpha^2}.
\]
To obtain the inequality~(\ref{calesp2}), we notice that
\begin{eqnarray*}
&& \E \bigl(G \bigl(1-e^{\alpha G+\beta\tilde{G}+\gamma} \bigr)^+ \bigr)
\\
&&\qquad =\E \bigl(G
\bigl(1-e^{\alpha G+\beta\tilde{G}+\gamma
} \bigr)^+ \bigr)-\E(G)
\\
&&\qquad =-\E \bigl(G \bigl(e^{\alpha G+\beta\tilde{G}+\gamma}\wedge 1 \bigr) \bigr)
\\
&&\qquad =-\frac{\alpha}{l\sqrt{\alpha^2+\beta^2}}
\times \sqrt{\frac{\alpha^2+\beta^2}{l^2}}\cG \biggl(\frac{\alpha
^2+\beta^2}{l^2},-
\frac{2\gamma}{l^2} \biggr)
\end{eqnarray*}
and conclude using (\ref{majog}).
To derive~(\ref{calesp3}), one obtains by conditioning by $G$ for the
second equality
%
\begin{eqnarray}\label{eg2ind}
&& \E \bigl(G^2 \bigl(e^{\alpha G+\beta\tilde{G}+\gamma}\wedge 1 \bigr) \bigr)
\nonumber
\\
&&\qquad =e^{((\alpha^2+\beta^2)/2)+\gamma}\E \bigl(G^2e^{\alpha
G+\beta\tilde{G}-((\alpha^2+\beta^2)/2)}1_{\{\alpha G+\beta
\tilde{G}\leq-\gamma\}}
\bigr)\nonumber
\\
&&\quad\qquad{} +\E \biggl(G^2\Phi \biggl(\frac
{\gamma+\alpha G}{\llvert \beta\rrvert } \biggr) \biggr)
\\
&&\qquad =e^{((\alpha^2+\beta^2)/2)+\gamma}\E \biggl( \bigl(G^2+2\alpha G+
\alpha^2 \bigr)\Phi \biggl(-\frac{\gamma+\alpha G+\alpha^2+\beta
^2}{\llvert \beta\rrvert } \biggr) \biggr)\nonumber
\\
&&\quad\qquad{} +\E
\biggl(G^2\Phi \biggl(\frac{\gamma
+\alpha G}{\llvert \beta\rrvert } \biggr)
\biggr).\nonumber
\end{eqnarray}
By integration by parts,
%
\begin{eqnarray}\label{eg2n}
\qquad && \E \biggl(G^2\Phi \biggl(\frac{\gamma+\alpha
G}{\llvert \beta\rrvert } \biggr) \biggr)\nonumber
\\
&&\qquad =
\frac{1}{\sqrt{2\pi}}\int_\R x^2\Phi \biggl(
\frac{\gamma+\alpha
x}{\llvert \beta\rrvert } \biggr)e^{-x^2/2}\,dx\nonumber
\\
&&\qquad =\frac{1}{\sqrt{2\pi}}\int_\R\Phi \biggl(
\frac{\gamma+\alpha x}{\llvert \beta\rrvert } \biggr)e^{-x^2/2}\,dx\nonumber
\\
&&\quad\qquad{}+\frac{\alpha}{2\pi\llvert \beta
\rrvert }\int
_\R xe^{-(x^2/2)-((\gamma+\alpha x)^2/(2\beta^2))}\,dx
\\
&&\qquad ={\mathbb P} \bigl(\llvert \beta\rrvert \tilde{G}-\alpha G\leq\gamma \bigr)\nonumber
\\
&&\quad\qquad{}+
\frac{\alpha e^{-{\gamma^2}/(2(\alpha^2+\beta^2))}}{2\pi
\llvert \beta\rrvert }\int_\R xe^{-({(\alpha^2+\beta^2)(x+((\gamma\alpha)/(\alpha^2+\beta^2)))^2})/(2\beta^2)}\,dx
\nonumber
\\
&&\qquad =\Phi \biggl(\frac{\gamma}{\sqrt{\alpha^2+\beta^2}} \biggr)-e^{-\gamma^2/({2(\alpha^2+\beta^2)})}\frac{\alpha^2\gamma
}{\sqrt{2\pi(\alpha^2+\beta^2)^3}}\nonumber
\end{eqnarray}
and
%
\begin{eqnarray}\label{gng}
&& \E \biggl(G\Phi \biggl(-\frac{\gamma+\alpha G+\alpha^2+\beta
^2}{\llvert \beta\rrvert } \biggr) \biggr)\nonumber
\\
&&\qquad
=-
\frac{\alpha}{2\pi\llvert \beta\rrvert }\int_\R e^{-(x^2/2)-((\alpha x+\gamma+\alpha^2+\beta
^2)^2/(2\beta^2))}\,dx
\\
&&\qquad =-\frac{\alpha}{\sqrt{2\pi(\alpha^2+\beta^2)}}e^{-{(\gamma
+\alpha^2+\beta^2)^2}/(2(\alpha^2+\beta^2))}.\nonumber
\end{eqnarray}
One obtains~(\ref{calesp3}) by plugging this last equality together
with (\ref{eg2n}) also written with $(\alpha,\gamma)$ replaced by
$(-\alpha,-(\gamma+\alpha^2+\beta^2))$ in (\ref{eg2ind}).

To prove~(\ref{calesp4}), conditioning by $\hat{G}$, using (\ref
{calesp1}) and then (\ref{gng}), one obtains
\begin{eqnarray*}
&& \E \bigl(G\hat{G} \bigl(e^{\alpha G+\beta\tilde{G}+\delta\hat
{G}+\gamma}\wedge1 \bigr) \bigr)
\\
&&\qquad =\alpha
e^{\gamma+(({\alpha^2+\beta^2})/{2})}\E \biggl(\hat{G}e^{\delta\hat{G}}\Phi \biggl(-
\frac{\gamma+\delta\hat{G}+\alpha^2+\beta^2}{\sqrt{\alpha^2+\beta
^2}} \biggr) \biggr)
\\
&&\qquad =\alpha e^{\gamma+(({\alpha^2+\beta^2+\delta^2})/{2})}\E \biggl((\hat{G}+\delta)\Phi \biggl(-
\frac{\gamma+\delta\hat{G}+\alpha
^2+\beta^2+\delta^2}{\sqrt{\alpha^2+\beta^2}} \biggr) \biggr)
\\
&&\qquad =\alpha\delta e^{\gamma+(({\alpha^2+\beta^2+\delta
^2})/{2})}
\\
&&\quad\qquad{}\times  \biggl(\Phi \biggl(-\frac{\gamma+\alpha^2+\beta^2+\delta
^2}{\sqrt{\alpha^2+\beta^2+\delta^2}}
\biggr)-\frac{e^{-{(\gamma+\alpha^2+\beta^2+\delta^2)^2}/(2(\alpha^2+\beta^2+\delta
^2))}}{\sqrt{2\pi(\alpha^2+\beta^2+\delta^2)}} \biggr).
\end{eqnarray*}

Last,
\begin{eqnarray*}
&& \frac{1}{l^2}\E \bigl(\cG(a,\alpha G+\beta) \bigr)
\\
&&\qquad = e^{(l^2(a+l^2\alpha^2/4-\beta))/2}
\mathbb{P} \biggl(\tilde{G}\leq l \biggl(\frac{\alpha G-l^2\alpha^2/2+\beta}{2\sqrt{a}}-\sqrt {a} \biggr)
\biggr)
\\
&&\qquad =e^{(l^2(a+l^2\alpha^2/4-\beta))/2}\mathbb{P} \biggl(\frac
{\sqrt{a+l^2\alpha^2/4}}{\sqrt{a}} \hat G\leq l
\frac{\beta-2(a+l^2\alpha^2/4)}{2\sqrt{a}} \biggr),
\end{eqnarray*}
which yields~(\ref{calesp5}).
\end{pf}

To prove Lemma~\ref{propdrift}, we need the following lemma.

\begin{lemma}\label{wass}
Let $X$, $Y$ denote two real random variables with respective
cumulative distribution functions $F_X$ and $F_Y$ and $f\dvtx \R\to\R$ be
a bounded function, Lipschitz continuous with constant $L(f)$ outside
$[-\varepsilon,\varepsilon]$ for some constant $\varepsilon>0$. If
$X$ admits a bounded density $p_X$ with respect to the Lebesgue measure
on $\R$, then
\begin{eqnarray*}
&& \bigl\llvert \E \bigl[f(X) \bigr]-\E \bigl[f(Y) \bigr]\bigr\rrvert
\\
&&\qquad \leq
L(f)W_1(X,Y)
+2(\sup f-\inf f) \bigl(\sqrt{2\llVert p_X\rrVert
_\infty W_1(X,Y)}+\llVert p_X\rrVert
_\infty\varepsilon \bigr),
\end{eqnarray*}
where $W_1(X,Y)=\inf_{(Z,W)\dvtx Z\stackrel{(d)}=X,W\stackrel{(d)}=Y}\E
\llvert Z-W\rrvert $ denotes the Wasserstein distance between the laws of $X$ and $Y$.
\end{lemma}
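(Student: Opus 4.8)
The plan is to realize the two laws on a common probability space through an optimal coupling and then split the expectation according to whether the coupled variables fall in the region where $f$ is Lipschitz. Assuming $W_1(X,Y)<\infty$ (otherwise the bound is vacuous), I would pick random variables $Z\stackrel{(d)}{=}X$ and $W\stackrel{(d)}{=}Y$ attaining the Wasserstein distance, which on $\R$ is always possible via the comonotone coupling $(F_X^{-1}(U),F_Y^{-1}(U))$ with $U$ uniform on $[0,1]$, so that $\E|Z-W|=W_1(X,Y)$. Then $|\E[f(X)]-\E[f(Y)]|=|\E[f(Z)-f(W)]|\le\E|f(Z)-f(W)|$, and it remains to estimate the right-hand side.

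First I would introduce the good event $G=\{|Z|\ge\varepsilon\}\cap\{|W|\ge\varepsilon\}$, on which both variables lie in the set $\{|x|\ge\varepsilon\}$ where $f$ is Lipschitz, so that $|f(Z)-f(W)|\le L(f)|Z-W|$; on its complement $B=G^c$ I would simply use the oscillation bound $|f(Z)-f(W)|\le \sup f-\inf f$. This yields
\begin{align*}
\E|f(Z)-f(W)|\le L(f)\,\E|Z-W|+(\sup f-\inf f)\,\P(B)= L(f)\,W_1(X,Y)+(\sup f-\inf f)\,\P(B),
\end{align*}
so the whole problem reduces to bounding $\P(B)$.

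The estimate of $\P(B)$ is where the density hypothesis on $X$ and the coupling must be used together, and this is the main obstacle: the naive bound $\P(B)\le\P(|Z|<\varepsilon)+\P(|W|<\varepsilon)$ fails because $Y$ need not admit a density, so $\P(|W|<\varepsilon)$ cannot be controlled directly. The key idea is to transfer the event $\{|W|<\varepsilon\}$ back to $Z$ through the coupling. I would write the asymmetric decomposition $\P(B)\le\P(|Z|<\varepsilon)+\P(|W|<\varepsilon,\,|Z|\ge\varepsilon)$ and, for the second term, fix a threshold $\delta>0$ and use the inclusion $\{|W|<\varepsilon,\,|Z|\ge\varepsilon\}\subseteq\{\varepsilon\le|Z|<\varepsilon+\delta\}\cup\{|Z-W|\ge\delta\}$, valid because $|W|<\varepsilon$ together with $|Z|\ge\varepsilon+\delta$ forces $|Z-W|>\delta$. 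The density bound gives $\P(|Z|<\varepsilon)\le 2\varepsilon\|p_X\|_\infty$ and $\P(\varepsilon\le|Z|<\varepsilon+\delta)\le 2\delta\|p_X\|_\infty$ (the relevant sets have Lebesgue measure $2\varepsilon$ and $2\delta$), while Markov's inequality gives $\P(|Z-W|\ge\delta)\le W_1(X,Y)/\delta$.

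Collecting these estimates, $\P(B)\le 2\varepsilon\|p_X\|_\infty+2\delta\|p_X\|_\infty+W_1(X,Y)/\delta$. Optimizing the last two terms over $\delta$ by taking $\delta=\sqrt{W_1(X,Y)/(2\|p_X\|_\infty)}$ makes them equal, with common value $\sqrt{2\|p_X\|_\infty W_1(X,Y)}$, so that $\P(B)\le 2\|p_X\|_\infty\varepsilon+2\sqrt{2\|p_X\|_\infty W_1(X,Y)}$. Substituting this into the reduction above produces exactly the claimed inequality. The only genuine subtlety is the asymmetric handling of the bad event: a symmetric split would cost a factor $4$ instead of $2$ in front of $\|p_X\|_\infty\varepsilon$, so some care is needed at that step to obtain the sharp constant stated in the lemma.
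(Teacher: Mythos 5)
Your reduction to bounding $\P(B)$ via the comonotone coupling, the buffer-zone inclusion $\{|W|<\varepsilon,\;|Z|\ge\varepsilon\}\subseteq\{\varepsilon\le|Z|<\varepsilon+\delta\}\cup\{|Z-W|\ge\delta\}$, Markov's inequality and the optimization in $\delta$ are all correct, and this part is a genuinely different (and more elementary) route than the paper's: where you re-derive the needed closeness of the two laws by hand, the paper instead computes the probabilities of the relevant events exactly under the quantile coupling, as $(F_X(-\varepsilon)-F_Y(-\varepsilon))^+ +(F_Y(\varepsilon)-F_X(\varepsilon))^+$, and controls them by the Kolmogorov--Wasserstein inequality $\sup_x|F_X(x)-F_Y(x)|\le\sqrt{2\|p_X\|_\infty W_1(X,Y)}$ taken from \cite{mattingly-pillai-stuart-12}. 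However, there is a genuine gap at the very first step: on your good event $G=\{|Z|\ge\varepsilon\}\cap\{|W|\ge\varepsilon\}$ you apply $|f(Z)-f(W)|\le L(f)|Z-W|$, but $G$ contains the straddling configurations $\{Z\le-\varepsilon,\,W\ge\varepsilon\}$ and $\{Z\ge\varepsilon,\,W\le-\varepsilon\}$, where $Z$ and $W$ lie in different connected components of $\R\setminus(-\varepsilon,\varepsilon)$. The hypothesis ``Lipschitz outside $[-\varepsilon,\varepsilon]$'' must be understood component-wise for the lemma to be usable: in the proof of Lemma~\ref{propdrift} it is applied to (a rescaling of) $b\mapsto\cG(a,b)$ with $\varepsilon=a^{1/4}$, a function whose derivative is bounded uniformly on $\{|b|\ge a^{1/4}\}$ but which rises from nearly $0$ at $b=-\varepsilon$ to nearly $l^2$ at $b=+\varepsilon$ while the gap width $2\varepsilon=2a^{1/4}$ tends to $0$; such a function is Lipschitz with a uniform constant on each half-line $(-\infty,-\varepsilon]$ and $[\varepsilon,+\infty)$ separately, but not on their union. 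The paper's proof is careful exactly here: the Lipschitz bound is used only on $\{Z\vee W\le-\varepsilon\}\cup\{Z\wedge W>\varepsilon\}$, and all crossing events (straddles included) are absorbed into the oscillation term.

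The gap is repairable within your own scheme, though not with the stated constants. Your buffer trick applies to the straddle events as well, e.g.\ $\{Z\le-\varepsilon,\,W\ge\varepsilon\}\subseteq\{-\varepsilon-\delta<Z\le-\varepsilon\}\cup\{|Z-W|>\delta\}$, so each straddle contributes at most $\delta\|p_X\|_\infty+W_1(X,Y)/\delta$ to $\P$(bad event); after optimizing in $\delta$ you recover an inequality of the same form with a somewhat larger numerical factor in front of $\sqrt{\|p_X\|_\infty W_1(X,Y)}$. That weaker version would still suffice for the paper's application, where only an unspecified constant $C$ is needed, but the literal inequality of the lemma, with the factor $2$, is obtained in the paper precisely because the crossing probabilities are computed exactly through the monotonicity of $u\mapsto F_X^{-1}(u)$ and $u\mapsto F_Y^{-1}(u)$ rather than estimated by Markov's inequality.
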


\begin{pf} 
Let for $u\in(0,1)$, $F_X^{-1}(u)=\inf\{x\in\R\dvtx F_X(x)\geq u\}$ denote
the c\`{a}g pseudo-inverse of $F_X$ and $F_Y^{-1}$ be defined in the same
way. Then $\forall x\in\R$, $\forall u\in(0,1)$, $F_X^{-1}(u)\leq
x\Leftrightarrow u\leq F_X(x)$. Moreover, if $U$ is uniformly
distributed on $[0,1]$, then $F_X^{-1}(U)\stackrel{(d)}{=}X$,
$F_Y^{-1}(U)\stackrel{(d)}{=}Y$ and according to \cite{rachev-ruschendorf-98}, \mbox{pages~107--109}, $W_1(X,Y)=\E
\llvert F_X^{-1}(U)-F_Y^{-1}(U)\rrvert $. As a consequence,
\begin{eqnarray*}
&& \bigl\llvert \E \bigl[f(X) \bigr]-\E \bigl[f(Y) \bigr]\bigr\rrvert
\\
&&\qquad = \bigl
\llvert \E \bigl[f \bigl(F_X^{-1}(U) \bigr)-f
\bigl(F_Y^{-1}(U) \bigr) \bigr]\bigr\rrvert
\\
&&\qquad \leq \bigl\llvert \E \bigl[ \bigl(f \bigl(F_X^{-1}(U)
\bigr)-f \bigl(F_Y^{-1}(U) \bigr) \bigr)
\\
&&\hspace*{46pt}{}\times
(1_{\{F_X^{-1}(U)\vee
F_Y^{-1}(U)\leq-\varepsilon\}}+1_{\{F_X^{-1}(U)\wedge
F_Y^{-1}(U)>\varepsilon\}}) \bigr]\bigr\rrvert
\\
&&\quad\qquad{}+\bigl\llvert \E \bigl[ \bigl(f \bigl(F_X^{-1}(U)
\bigr)-f \bigl(F_Y^{-1}(U) \bigr) \bigr)
\\
&&\hspace*{58pt}{}\times
(1_{\{F_X^{-1}(U)\leq
-\varepsilon<F_Y^{-1}(U)\}}+1_{\{F_X^{-1}(U)>\varepsilon\geq
F_Y^{-1}(U)\}}) \bigr]\bigr\rrvert
\\
&&\quad\qquad{} +\bigl\llvert \E \bigl[ \bigl(f \bigl(F_X^{-1}(U)
\bigr)-f \bigl(F_Y^{-1}(U) \bigr) \bigr)1_{\{-\varepsilon
<F_X^{-1}(U)\leq\varepsilon\}}
\bigr]\bigr\rrvert
\\
&&\qquad \leq  L(f)\E\bigl\llvert F_X^{-1}(U)-F_Y^{-1}(U)
\bigr\rrvert
\\
&&\quad\qquad{}+(\sup f-\inf f)
\bigl(\mathbb{P} \bigl(F_Y(-\varepsilon)<U
\leq F_X(-\varepsilon) \bigr)
\\
&&\hspace*{115pt}{}
+\mathbb{P} \bigl(F_X(
\varepsilon)<U\leq F_Y(\varepsilon) \bigr)
\\
&&\hspace*{115pt}{}+\mathbb{P} \bigl(F_X(-\varepsilon)<U\leq F_X(
\varepsilon) \bigr) \bigr)
\\
&&\qquad =L(f)W_1(X,Y)
\\
&&\quad\qquad{}+(\sup f-\inf f) \biggl( \bigl(F_X(-\varepsilon)-F_Y(-
\varepsilon ) \bigr)^++ \bigl(F_Y(\varepsilon)-F_X(
\varepsilon) \bigr)^+
\\
&&\hspace*{241pt}{}+\int_{-\varepsilon
}^\varepsilon
p_X(x)\,dx \biggr).
\end{eqnarray*}
One concludes by using the inequality
\[
\sup_{x\in\R}\bigl\llvert F_X(x)-F_Y(x)
\bigr\rrvert \leq \sqrt{2\llVert p_X\rrVert _\infty
W_1(X,Y)}.
\]
This inequality is stated in \cite{mattingly-pillai-stuart-12}, Lemma~5.4, with the factor $2$ replaced by $4$
but a careful look at the proof of this lemma shows that it holds with
the factor $2$.
\end{pf}

\begin{pf*}{Proof of Lemma~\ref{propdrift}}
By Lipschitz continuity of $x\mapsto e^{x}\wedge1$ and the Taylor expansion
\[
V \biggl(x_i+\frac{l}{\sqrt{n}}G^i \biggr)=V(x_i)+
\frac{l
V'(x_i)}{\sqrt{n}}G^i+\frac{l^2V''(x_i)}{2n} \bigl(G^i
\bigr)^2+\frac
{l^3V^{(3)}(\chi_i)}{6n^{3/2}} \bigl(G^i
\bigr)^3
\]
with $\chi_i\in[x_i,x_i+\frac{l}{\sqrt{n}}G^i]$, one obtains
\begin{eqnarray*}
&&\E \bigl[ \bigl(e^{\sum_{i=1}^n (V(x_i)-V(x_i+(l/{\sqrt
{n}})G^i))}\wedge1-e^{-\sum_{i=1}^n ((l/\sqrt
{n})V'(x_i)G^i+(l^2/(2n))V''(x_i))}\wedge1
\bigr)^2 \bigr]
\\
&&\qquad \leq\E \Biggl[ \Biggl(\sum_{i=1}^n
\biggl(\frac
{l^2V''(x_i)}{2n} \bigl( \bigl(G^i \bigr)^2-1
\bigr)+ \frac{l^3{V^{(3)}}(\chi
_i)}{6n^{3/2}} \bigl(G^i \bigr)^3 \biggr)
\Biggr)^2 \Biggr].
\end{eqnarray*}
Developing\vspace*{1pt} the square and remarking that for $i\neq j$, $\E
[((G^i)^2-1)((G^j)^2-1) ]=0=\E [((G^i)^2-1)V^{(3)}(\chi
_j)(G^j)^3 ]$, one easily deduces (\ref{reste}) using the
boundedness of $V''$ and $V^{(3)}$.

The\vspace*{1pt} proof of the two other inequalities is inspired by~\cite{mattingly-pillai-stuart-12}, Section~5, where the authors first replace
$V(x_1)-V(x_1+\frac{l}{\sqrt{n}}G^1)$ by $-\frac{lV'(x_1)}{\sqrt{n}}G^1$
in the exponential factor at a cost ${\mathcal O}(\frac{1}{n})$. Then
they explicitly compute the conditional expectation given $(G^2,\ldots,G^n)$ to improve the regularity of the function in the expectation.
Next, they replace $\sum_{i=2}^n (V(x_i+\frac{l}{\sqrt
{n}}G^i)-V(x_i))$ by the Gaussian random variable $\sum_{i=2}^n (\frac
{l V'(x_i)}{\sqrt{n}}G^i+\frac{l^2 V''(x_i)}{2n})$ and control the
resulting error by some Wasserstein distance estimate between these two
random variables. To preserve symmetry in the estimate and in
particular to obtain $\langle\nu_n,(V')^2\rangle$ instead of $\frac
{1}{n}\sum_{i=2}^n(V'(x_i))^2$ in the denominators, we write $G_1$ as
the sum of two independent variables distributed according to $\mathcal
{N}(0,\frac{1}{2})$.

Let $\tilde{G}^1=\frac{G^1}{\sqrt{2}}$, $\tilde{G}^i=G^i$ for
$i\geq2$ and $\hat{G}^1\sim\mathcal{N}(0,\frac{1}{2})$ be
independent from $(G^1,\ldots,G^n)$. One has
\begin{eqnarray*}
&&\E \bigl(G^1 \bigl(e^{\sum_{i=1}^n
(V(x_i)-V(x_i+ (l/\sqrt{n})G^i))}\wedge1 \bigr) \bigr)
\\
&&\qquad =2 \E \bigl(\hat{G}^1 \bigl(e^{V(x_1)-V(x_1+({l}/{\sqrt{n}})(\tilde
{G}^1+\hat{G}^1))+\sum_{i=2}^n
(V(x_i)-V(x_i+(l/\sqrt{n})\tilde{G}^i))}\wedge 1 \bigr)
\bigr).
\end{eqnarray*}
As in the above derivation of (\ref{reste}), one deduces from the
Lipschitz continuity of $y\mapsto e^y\wedge1$ and the boundedness of
$V''$ that
\[
\bigl\llvert \E \bigl(G^1 \bigl(e^{\sum_{i=1}^n (V(x_i)-V(x_i+(l/\sqrt{n})G^i))}\wedge1 \bigr)
\bigr)-E \bigr\rrvert \leq\frac{C}{n},
\]
where, by conditioning by $(\tilde{G}^1,\ldots,\tilde{G}^n)$ and
using (\ref{calesp1}),
\begin{eqnarray*}
E&\stackrel{\mathrm{def}}{=}&2\E \bigl(\hat{G}^1 \bigl(e^{-((l
V'(x_1))/{\sqrt{n}})\hat{G}^1+\sum_{i=1}^n  (V(x_i)-V
(x_i+(l/{\sqrt{n}})\tilde{G}^i ) )}
\wedge1 \bigr) \bigr)
\\
&=&-\frac{V'(x_1)}{l\sqrt{n}}\E \Biggl[{\cG} \Biggl(\frac
{(V'(x_1))^2}{2n},
\frac{2}{l^2}\sum_{i=1}^n \biggl(V
\biggl(x_i+\frac
{l}{\sqrt{n}}\tilde{G}^i
\biggr)-V(x_i) \biggr) \Biggr) \Biggr].
\end{eqnarray*}
By boundedness of $\cG$ and since
\begin{eqnarray*}
&&\E \bigl[G^1 \bigl\{ \bigl(e^{\sum_{i=1}^n (V(x_i)-V(x_i+({l}/{\sqrt{n}})G^i))}\wedge1 \bigr)+
\bigl(1-e^{\sum_{i=1}^n
(V(x_i)-V(x_i+({l}/{\sqrt{n}})G^i))} \bigr)^+ \bigr\} \bigr]
\\
&&\qquad =\E[G_1]=0,
\end{eqnarray*}
one deduces (\ref{restrev1}).

Moreover, when $V'(x_1)=0$, $E=0$ and (\ref{restrev2}) holds. To deal
with the case $V'(x_1)\neq0$, we let
\begin{eqnarray*}
X&\stackrel{\mathrm{def}}{=}&\sum_{i=1}^n
\biggl(\frac{l V'(x_i)}{\sqrt
{n}}\tilde{G}^i+\frac{l^2V''(x_i)}{2n} \biggr)
\\
&\sim&
\mathcal{N} \biggl(\frac{l^2}{2} \bigl\langle\nu_n,V''
\bigr\rangle, l^2 \bigl\langle\nu _n,
\bigl(V' \bigr)^2 \bigr\rangle-l^2
\bigl(V'(x_1) \bigr)^2/2 \biggr),
\\
Y&\stackrel{\mathrm{def}}{=}&\sum_{i=1}^n
\biggl(V \biggl(x_i+\frac{l}{\sqrt{n}}\tilde{G}^i
\biggr)-V(x_i) \biggr)
\\
&=&X+\frac{l^2}{2n}\sum_{i=1}^n
V''(x_i) \bigl( \bigl(
\tilde{G}^i \bigr)^2-1 \bigr)+\frac
{l^3}{6n^{3/2}}\sum
_{i=1}^n V^{(3)}(
\chi_i) \bigl(\tilde{G}^i \bigr)^3
\end{eqnarray*}
with $\chi_i\in[x_i,x_i+\frac{l}{\sqrt{n}}\tilde{G}^i]$. By
boundedness of $V''$ and $V^{(3)}$ and since
$\E[((\tilde{G}^i)^2-1)((\tilde{G}^j)^2-1)]=0$ as soon as $j\neq i$
and $\E[V^{(3)}(\chi_i)(\tilde G^i)^3((\tilde{G}^j)^2-1)]=0$ as soon
as $j\notin\{1,i\}$,
$\E[(X-Y)^2]\leq\frac{C}{n}$ which implies that $W_1(X,Y)\leq\frac
{C}{\sqrt{n}}$. The density of $X$ is bounded by $(l^2\pi\langle\nu
_n,(V')^2\rangle)^{-1/2}$.
By Lemma~\ref{propggam}, the function $\cG$ takes its values in
$[0,l^2]$. Moreover,
\[
\partial_b {\cG}(a,b)=-\frac{l^2}{2}{\cG}(a,b)+
\frac{l^3}{2\sqrt
{2\pi a}}e^{-(l^2b^2)/(8a)}
\]
which ensures that $\sup_{(a,b)\dvtx \llvert b\rrvert \geq a^{1/4}}\llvert \partial_b {\cG
}(a,b)\rrvert <+\infty$. Lemma~\ref{wass} applied with $\varepsilon=\frac
{\sqrt{\llvert V'(x_1)\rrvert }}{(2n)^{1/4}}$ implies that
\begin{eqnarray*}
&& \Biggl\llvert \E \Biggl[{\cG} \Biggl(\frac{(V'(x_1))^2}{2n},\frac
{2}{l^2}
\sum_{i=1}^n \biggl(V
\biggl(x_i+ \frac{l}{\sqrt{n}}\tilde {G}^i
\biggr)-V(x_i) \biggr) \Biggr) \Biggr]
\\
&&\hspace*{128pt}{}-\E \biggl[{\cG} \biggl(
\frac
{(V'(x_1))^2}{2n},\frac{2X}{l^2} \biggr) \biggr] \Biggr\rrvert
\\
&&\qquad \leq\frac
{C}{\sqrt{n}}+\frac{C}{(n\langle\nu_n,(V')^2\rangle)^{1/4}}+\frac
{C\sqrt{\llvert V'(x_1)\rrvert }}{n^{1/4}\sqrt{\langle\nu_n,(V')^2\rangle}},
\end{eqnarray*}
where $C$ depends neither on $x$ nor on $n$.
One concludes by remarking that, by~(\ref{calesp5}),
\[
\E \biggl[{\cG} \biggl(\frac{(V'(x_1))^2}{2n},\frac{2X}{l^2} \biggr) \biggr]={
\cG} \bigl( \bigl\langle\nu_n, \bigl(V'
\bigr)^2 \bigr\rangle, \bigl\langle\nu _n,V''
\bigr\rangle \bigr).
\]\upqed
\end{pf*}
\end{appendix}



%

\printaddresses

\begin{thebibliography}{24}
\bibitem{bedard-07}
%
\begin{barticle}[mr]
\bauthor{\bsnm{B{\'e}dard},~\bfnm{Myl{\`e}ne}\binits{M.}}
(\byear{2007}).
\btitle{Weak convergence of {M}etropolis algorithms for non-i.i.d.
target distributions}.
\bjournal{Ann. Appl. Probab.}
\bvolume{17}
\bpages{1222--1244}.
\bid{doi={10.1214/105051607000000096}, issn={1050-5164}, mr={2344305}}
\end{barticle}
%
\bptok{imsref}%
\endbibitem

\bibitem{bedard-08}
%
\begin{barticle}[mr]
\bauthor{\bsnm{B{\'e}dard},~\bfnm{Myl{\`e}ne}\binits{M.}}
(\byear{2008}).
\btitle{Optimal acceptance rates for {M}etropolis algorithms: Moving
beyond~0.234}.
\bjournal{Stochastic Process. Appl.}
\bvolume{118}
\bpages{2198--2222}.
\bid{doi={10.1016/j.spa.2007.12.005}, issn={0304-4149}, mr={2474348}}
\end{barticle}
%
\bptok{imsref}%
\endbibitem

\bibitem{bedard-douc-moulines-12}
%
\begin{barticle}[mr]
\bauthor{\bsnm{B{\'e}dard},~\bfnm{Myl{\`e}ne}\binits{M.}},
\bauthor{\bsnm{Douc},~\bfnm{Randal}\binits{R.}} \AND
\bauthor{\bsnm{Moulines},~\bfnm{Eric}\binits{E.}}
(\byear{2012}).
\btitle{Scaling analysis of multiple-try MCMC methods}.
\bjournal{Stochastic Process. Appl.}
\bvolume{122}
\bpages{758--786}.
\bid{doi={10.1016/j.spa.2011.11.004}, issn={0304-4149}, mr={2891436}}
\end{barticle}
%
\bptok{imsref}%
\endbibitem

\bibitem{bedard-douc-moulines-12b}
%
\begin{barticle}[auto:STB|2014/06/18|12:29:53]
\bauthor{\bsnm{B{\'e}dard},~\bfnm{M.}\binits{M.}},
\bauthor{\bsnm{Douc},~\bfnm{R.}\binits{R.}} \AND
\bauthor{\bsnm{Moulines},~\bfnm{E.}\binits{E.}}
(\byear{2014}).
\btitle{Scaling analysis of delayed rejection {MCMC} methods}.
\bjournal{Methodol. Comput. Appl. Probab.}
\bvolume{16}
\bpages{811--838}.
\bid{mr={3270597}}
\end{barticle}
%
\bptok{imsref}%
\endbibitem

\bibitem{beskos-roberts-sanz-serna-stuart-12}
\begin{barticle}[mr]
\bauthor{\bsnm{Beskos},~\bfnm{Alexandros}\binits{A.}},
\bauthor{\bsnm{Pillai},~\bfnm{Natesh}\binits{N.}},
\bauthor{\bsnm{Roberts},~\bfnm{Gareth}\binits{G.}},
\bauthor{\bsnm{Sanz-Serna},~\bfnm{Jesus-Maria}\binits{J.-M.}} \AND
\bauthor{\bsnm{Stuart},~\bfnm{Andrew}\binits{A.}}
(\byear{2013}).
\btitle{Optimal tuning of the hybrid {M}onte {C}arlo algorithm}.
\bjournal{Bernoulli}
\bvolume{19}
\bpages{1501--1534}.
\bid{doi={10.3150/12-BEJ414}, issn={1350-7265}, mr={3129023}}
\end{barticle}
\bptok{imsref}%
\endbibitem

\bibitem{beskos-roberts-stuart-09}
%
\begin{barticle}[mr]
\bauthor{\bsnm{Beskos},~\bfnm{Alexandros}\binits{A.}},
\bauthor{\bsnm{Roberts},~\bfnm{Gareth}\binits{G.}} \AND
\bauthor{\bsnm{Stuart},~\bfnm{Andrew}\binits{A.}}
(\byear{2009}).
\btitle{Optimal scalings for local {M}etropolis--{H}astings chains on
nonproduct targets in high dimensions}.
\bjournal{Ann. Appl. Probab.}
\bvolume{19}
\bpages{863--898}.
\bid{doi={10.1214/08-AAP563}, issn={1050-5164}, mr={2537193}}
\end{barticle}
%
\bptok{imsref}%
\endbibitem

\bibitem{breyer-piccioni-scarlatti-04}
%
\begin{barticle}[mr]
\bauthor{\bsnm{Breyer},~\bfnm{Laird~Arnault}\binits{L.~A.}},
\bauthor{\bsnm{Piccioni},~\bfnm{Mauro}\binits{M.}} \AND
\bauthor{\bsnm{Scarlatti},~\bfnm{Sergio}\binits{S.}}
(\byear{2004}).
\btitle{Optimal scaling of {M}a{L}a for nonlinear regression}.
\bjournal{Ann. Appl. Probab.}
\bvolume{14}
\bpages{1479--1505}.
\bid{doi={10.1214/105051604000000369}, issn={1050-5164}, mr={2071431}}
\end{barticle}
%
\bptok{imsref}%
\endbibitem

\bibitem{breyer-roberts-00}
%
\begin{barticle}[mr]
\bauthor{\bsnm{Breyer},~\bfnm{L.~A.}\binits{L.~A.}} \AND
\bauthor{\bsnm{Roberts},~\bfnm{G.~O.}\binits{G.~O.}}
(\byear{2000}).
\btitle{From {M}etropolis to diffusions: {G}ibbs states and optimal scaling}.
\bjournal{Stochastic Process. Appl.}
\bvolume{90}
\bpages{181--206}.
\bid{doi={10.1016/S0304-4149(00)00041-7}, issn={0304-4149}, mr={1794535}}
\end{barticle}
%
\bptok{imsref}%
\endbibitem

\bibitem{christensen-roberts-rosenthal-05}
%
\begin{barticle}[mr]
\bauthor{\bsnm{Christensen},~\bfnm{Ole~F.}\binits{O.~F.}},
\bauthor{\bsnm{Roberts},~\bfnm{Gareth~O.}\binits{G.~O.}} \AND
\bauthor{\bsnm{Rosenthal},~\bfnm{Jeffrey~S.}\binits{J.~S.}}
(\byear{2005}).
\btitle{Scaling limits for the transient phase of local
{M}etropolis--{H}astings algorithms}.
\bjournal{J. R. Stat. Soc. Ser. B Stat. Methodol.}
\bvolume{67}
\bpages{253--268}.
\bid{doi={10.1111/j.1467-9868.2005.00500.x}, issn={1369-7412}, mr={2137324}}
\end{barticle}
%
\bptok{imsref}%
\endbibitem

\bibitem{ethier-kurtz-86}
%
\begin{bbook}[mr]
\bauthor{\bsnm{Ethier},~\bfnm{Stewart~N.}\binits{S.~N.}} \AND
\bauthor{\bsnm{Kurtz},~\bfnm{Thomas~G.}\binits{T.~G.}}
(\byear{1986}).
\btitle{Markov Processes: Characterization and Convergence}.
\bpublisher{Wiley},
\blocation{New York}.
\bid{doi={10.1002/9780470316658}, mr={0838085}}
\end{bbook}\vadjust{\goodbreak}
%
\bptok{imsref}%
\endbibitem

\bibitem{hastings-70}
%
\begin{barticle}[auto:STB|2014/06/18|12:29:53]
\bauthor{\bsnm{Hastings},~\bfnm{W.~K.}\binits{W.~K.}}
(\byear{1970}).
\btitle{Monte Carlo sampling methods using Markov chains and their
applications}.
\bjournal{Biometrika}
\bvolume{57}
\bpages{97--109}.
\end{barticle}
%
\bptok{imsref}%
\endbibitem

\bibitem{JLM2}
%
\begin{barticle}[auto:STB|2014/06/18|12:29:53]
\bauthor{\bsnm{Jourdain},~\bfnm{B.}\binits{B.}},
\bauthor{\bsnm{Leli{\`e}vre},~\bfnm{T.}\binits{T.}} \AND
\bauthor{\bsnm{Miasojedow},~\bfnm{B.}\binits{B.}}
(\byear{2014}).
\btitle{Optimal scaling for the transient phase of Metropolis--Hastings
algorithms: The longtime behavior}.
\bjournal{Bernoulli}
\bvolume{20}
\bpages{1930--1978}.
\bid{mr={3263094}}
\end{barticle}
%
\bptok{imsref}%
\endbibitem

\bibitem{karatzas-shreve-88}
%
\begin{bbook}[mr]
\bauthor{\bsnm{Karatzas},~\bfnm{Ioannis}\binits{I.}} \AND
\bauthor{\bsnm{Shreve},~\bfnm{Steven~E.}\binits{S.~E.}}
(\byear{1988}).
\btitle{Brownian Motion and Stochastic Calculus},
\bedition{2nd} ed.
\bpublisher{Springer},
\blocation{New York}.
\bid{doi={10.1007/978-1-4684-0302-2}, mr={0917065}}
\end{bbook}
%
\bptok{imsref}%
\endbibitem

\bibitem{mattingly-pillai-stuart-12}
%
\begin{barticle}[mr]
\bauthor{\bsnm{Mattingly},~\bfnm{Jonathan~C.}\binits{J.~C.}},
\bauthor{\bsnm{Pillai},~\bfnm{Natesh~S.}\binits{N.~S.}} \AND
\bauthor{\bsnm{Stuart},~\bfnm{Andrew~M.}\binits{A.~M.}}
(\byear{2012}).
\btitle{Diffusion limits of the random walk {M}etropolis algorithm in
high dimensions}.
\bjournal{Ann. Appl. Probab.}
\bvolume{22}
\bpages{881--930}.
\bid{doi={10.1214/10-AAP754}, issn={1050-5164}, mr={2977981}}
\end{barticle}
%
\bptok{imsref}%
\endbibitem

\bibitem{metropolis-rosenbluth-rosenbltuh-teller-teller-53}
%
\begin{barticle}[auto:STB|2014/06/18|12:29:53]
\bauthor{\bsnm{Metropolis},~\bfnm{N.}\binits{N.}},
\bauthor{\bsnm{Rosenbluth},~\bfnm{A.}\binits{A.}},
\bauthor{\bsnm{Rosenbluth},~\bfnm{M.}\binits{M.}},
\bauthor{\bsnm{Teller},~\bfnm{A.}\binits{A.}} \AND
\bauthor{\bsnm{Teller},~\bfnm{E.}\binits{E.}}
(\byear{1953}).
\btitle{Equation of state calculations by fast computing machines}.
\bjournal{J. Chem. Phys.}
\bvolume{21}
\bpages{1087--1092}.
\end{barticle}
%
\bptok{imsref}%
\endbibitem

\bibitem{neal-roberts-11}
%
\begin{barticle}[mr]
\bauthor{\bsnm{Neal},~\bfnm{Peter}\binits{P.}} \AND
\bauthor{\bsnm{Roberts},~\bfnm{Gareth}\binits{G.}}
(\byear{2011}).
\btitle{Optimal scaling of random walk {M}etropolis algorithms with
non-{G}aussian proposals}.
\bjournal{Methodol. Comput. Appl. Probab.}
\bvolume{13}
\bpages{583--601}.
\bid{doi={10.1007/s11009-010-9176-9}, issn={1387-5841}, mr={2822397}}
\end{barticle}
%
\bptok{imsref}%
\endbibitem

\bibitem{neal-roberts-yuen-12}
%
\begin{barticle}[mr]
\bauthor{\bsnm{Neal},~\bfnm{Peter}\binits{P.}},
\bauthor{\bsnm{Roberts},~\bfnm{Gareth}\binits{G.}} \AND
\bauthor{\bsnm{Yuen},~\bfnm{Wai~Kong}\binits{W.~K.}}
(\byear{2012}).
\btitle{Optimal scaling of random walk {M}etropolis algorithms with
discontinuous target densities}.
\bjournal{Ann. Appl. Probab.}
\bvolume{22}
\bpages{1880--1927}.
\bid{doi={10.1214/11-AAP817}, issn={1050-5164}, mr={3025684}}
\end{barticle}
%
\bptok{imsref}%
\endbibitem

\bibitem{pillai-struart-thiery-11}
%
\begin{barticle}[mr]
\bauthor{\bsnm{Pillai},~\bfnm{Natesh~S.}\binits{N.~S.}},
\bauthor{\bsnm{Stuart},~\bfnm{Andrew~M.}\binits{A.~M.}} \AND
\bauthor{\bsnm{Thi{\'e}ry},~\bfnm{Alexandre~H.}\binits{A.~H.}}
(\byear{2012}).
\btitle{Optimal scaling and diffusion limits for the {L}angevin
algorithm in high dimensions}.
\bjournal{Ann. Appl. Probab.}
\bvolume{22}
\bpages{2320--2356}.
\bid{doi={10.1214/11-AAP828}, issn={1050-5164}, mr={3024970}}
\end{barticle}
%
\bptok{imsref}%
\endbibitem

\bibitem{PST11}
\begin{barticle}[mr]
\bauthor{\bsnm{Pillai},~\bfnm{Natesh~S.}\binits{N.~S.}},
\bauthor{\bsnm{Stuart},~\bfnm{Andrew~M.}\binits{A.~M.}} \AND
\bauthor{\bsnm{Thi{\'e}ry},~\bfnm{Alexandre~H.}\binits{A.~H.}}
(\byear{2014}).
\btitle{Noisy gradient flow from a random walk in {H}ilbert space}.
\bjournal{Stoch. PDE: Anal. Comput.}
\bvolume{2}
\bpages{196--232}.
\bid{doi={10.1007/s40072-014-0029-3}, issn={2194-0401}, mr={3249584}}
\end{barticle}
\bptok{imsref}%
\endbibitem

\bibitem{rachev-ruschendorf-98}
%
\begin{bbook}[mr]
\bauthor{\bsnm{Rachev},~\bfnm{Svetlozar~T.}\binits{S.~T.}} \AND
\bauthor{\bsnm{R{\"u}schendorf},~\bfnm{Ludger}\binits{L.}}
(\byear{1998}).
\btitle{Mass Transportation Problems: Theory}.
\bseries{Probability and Its Applications (New York)}
\bvolume{1}.
\bpublisher{Springer},
\blocation{New York}.
\bid{mr={1619170}}
\end{bbook}
%
\bptok{imsref}%
\endbibitem

\bibitem{roberts-gelman-gilks-97}
%
\begin{barticle}[mr]
\bauthor{\bsnm{Roberts},~\bfnm{G.~O.}\binits{G.~O.}},
\bauthor{\bsnm{Gelman},~\bfnm{A.}\binits{A.}} \AND
\bauthor{\bsnm{Gilks},~\bfnm{W.~R.}\binits{W.~R.}}
(\byear{1997}).
\btitle{Weak convergence and optimal scaling of random walk
{M}etropolis algorithms}.
\bjournal{Ann. Appl. Probab.}
\bvolume{7}
\bpages{110--120}.
\bid{doi={10.1214/aoap/1034625254}, issn={1050-5164}, mr={1428751}}
\end{barticle}
%
\bptok{imsref}%
\endbibitem

\bibitem{roberts-rosental-97}
%
\begin{barticle}[mr]
\bauthor{\bsnm{Roberts},~\bfnm{Gareth~O.}\binits{G.~O.}} \AND
\bauthor{\bsnm{Rosenthal},~\bfnm{Jeffrey~S.}\binits{J.~S.}}
(\byear{1998}).
\btitle{Optimal scaling of discrete approximations to {L}angevin diffusions}.
\bjournal{J. R. Stat. Soc. Ser. B Stat. Methodol.}
\bvolume{60}
\bpages{255--268}.
\bid{doi={10.1111/1467-9868.00123}, issn={1369-7412}, mr={1625691}}
\bptnote{check year}%
\end{barticle}
%
\bptok{imsref}%
\endbibitem

\bibitem{roberts-rosenthal-01}
%
\begin{barticle}[mr]
\bauthor{\bsnm{Roberts},~\bfnm{Gareth~O.}\binits{G.~O.}} \AND
\bauthor{\bsnm{Rosenthal},~\bfnm{Jeffrey~S.}\binits{J.~S.}}
(\byear{2001}).
\btitle{Optimal scaling for various {M}etropolis--{H}astings algorithms}.
\bjournal{Statist. Sci.}
\bvolume{16}
\bpages{351--367}.
\bid{doi={10.1214/ss/1015346320}, issn={0883-4237}, mr={1888450}}
\end{barticle}
%
\bptok{imsref}%
\endbibitem

\bibitem{sznitman-91}
%
\begin{bincollection}[mr]
\bauthor{\bsnm{Sznitman},~\bfnm{Alain-Sol}\binits{A.-S.}}
(\byear{1991}).
\btitle{Topics in propagation of chaos}.
In \bbooktitle{\'{E}cole D'\'{E}t\'{e} de {P}robabilit\'{e}s de
{S}aint-{F}lour XIX---1989}.
\bseries{Lecture Notes in Math.}
\bvolume{1464}
\bpages{165--251}.
\bpublisher{Springer},
\blocation{Berlin}.
\bid{doi={10.1007/BFb0085169}, mr={1108185}}
\end{bincollection}
%
\bptok{imsref}%
\endbibitem

\end{thebibliography}
\end{document}